\newcommand{\R}{\mathbb{R}}
\newcommand{\N}{\mathbb{N}}
\newcommand{\cP}{\mathcal{P}}
\newcommand{\cN}{\mathcal{N}}
\newcommand{\Nh}{{\mathcal N}}
\newcommand{\V}{{\mathcal V}}
\newcommand{\M}{M}
\newtheorem{theorem}{Theorem}[section]
\newtheorem{assumption}[theorem]{Assumption}
\newtheorem{remark}[theorem]{Remark}
\newtheorem{lemma}[theorem]{Lemma}
\newtheorem{proposition}[theorem]{Proposition}
\newtheorem{definition}[theorem]{Definition}
\numberwithin{equation}{section}
\title{Reduced basis method for non-symmetric eigenvalue problems: application to the multigroup neutron diffusion equations}
\author[1]{Yonah Conjungo Taumhas}
\author[2]{Geneviève Dusson}
\author[3]{Virginie Ehrlacher}
\author[3]{Tony Lelièvre}
\author[1]{François Madiot}
\affil[1]{Université Paris-Saclay, CEA,  Service d'\'Etudes des Réacteurs et de Mathématiques Appliquées, 91191, Gif-sur-Yvette, France}
\affil[2]{Laboratoire de Mathématiques de
Besancon, UMR CNRS 6623,  
Universit\'e de Franche-Comt\'e, 16 route de Gray, 25030 Besan\c con, France}
\affil[3]{CERMICS, \'Ecole des Ponts ParisTech,  
  6-8 avenue Blaise Pascal, Cité Descartes, 77455 Marne-la-Vallée, Cedex 2, France}
\date{\today}
\begin{document}

\maketitle

\begin{abstract}
In this article, we propose a reduced basis method for parametrized non-symmetric eigenvalue problems arising in the loading pattern optimization of a nuclear core in neutronics. To this end, we derive {\it a posteriori} error estimates for the eigenvalue and left and right eigenvectors. 
The practical computation of these estimators requires the estimation of a constant called prefactor, which we can express as the spectral norm of some operator. We provide some elements of theoretical analysis which illustrate the link between the expression of the prefactor we obtain here and its well-known expression in the case of symmetric eigenvalue problems, either using the notion of numerical range of the operator, or via a perturbative analysis. Lastly, we propose a practical method in order to estimate this prefactor which yields interesting numerical results on actual test cases.
We provide detailed numerical simulations on two-dimensional examples including a multigroup neutron diffusion equation.
\end{abstract}

\section{Introduction}

In this work, we are interested in developing a numerical method to efficiently compute the solutions of a parametrized non self-adjoint eigenvalue problem for a large number of parameter values. 
An example of application where this type of problem occurs and which motivates the present work is the resolution of criticality problems in neutronics. The method we propose relies on a reduced basis technique~\cite{maday2002priori,prud2002reliable}.

Model order reduction methods such as reduced basis techniques~\cite{boyaval2010reduced, Hesthaven2016-xm,Quarteroni2015-db} are useful to accelerate the computation of approximate solutions to parameterized problems. 
In the context of neutronics, parametrized problems naturally occur when optimizing the loading pattern of a nuclear core~\cite{dechaine1995nuclear,do2007application,turinsky2005nuclear}.  Mathematically, this amounts to optimizing an objective function which involves the solution to a generalized non-symmetric eigenvalue problem, parameterized by the fuel assemblies distribution.
The objective of this work is thus to propose a reduced basis technique in this context. It can be seen as a generalization of~\cite{Horger2017-jc, Fumagalli2016-fi,Boffi2023-ir}, where reduced basis methods for symmetric eigenvalue problems have been developed. 
A main difficulty is to obtain reliable {\it a posteriori} estimators in order to build the reduced basis and certify the results obtained with the reduced problem. We refer to~\cite{Gedicke2009-rx,giani2016robust} for {\it a posteriori} error estimators for non self-adjoint eigenvalue problems in a classical finite element context. Let us also mention the recent work~\cite{Edel2022-rg} and references therein for efficient {\it a posteriori} estimators for non-symmetric problems. We refer to~\cite{buchan2013pod,german2019reduced,lorenzi2018adjoint,sartori2015reduced} for some other applications of model order reduction techniques applied to neutronics.

As mentioned above, the main bottleneck here is to propose efficient {\it a posteriori} error estimators for a reduced basis approximation of non self-adjoint eigenvalue problems. More precisely, we consider a situation where one is interested in computing the eigenvalue of smallest modulus of a parameterized eigenvalue problem, which is assumed to be simple. 
The {\it a posteriori} error estimators read as products of norms of the residuals of the direct and associated adjoint eigenvalue problems times a multiplicative constant, which we call hereafter the {\it prefactor}. 
Computing an accurate and optimal value of this prefactor is not an easy task, compared to the case of symmetric eigenvalue problems where it can be expressed by means of the spectral gap of the considered operator.

Three main contributions are proposed in this work. 
First, we derive an expression of the prefactor in the case of non-symmetric eigenvalue problems as the spectral norm of a composition of some well-chosen operators. 
Second, we provide some elements of theoretical analysis to illustrate the close link between the obtained expression of the prefactor and its well-known expression in the case of symmetric eigenvalue problems. 
This link is highlighted in two different ways: first, we give an expression of the prefactor using the distance between the approximate eigenvalue and the numerical range of the non-symmetric operator and observe that the numerical range plays a similar role as the spectrum of the operator in the self-adjoint case; 
second, we use perturbative arguments to give a second-order development of this prefactor when the operator is a small perturbation of a symmetric operator.  
As our third contribution, we propose a practical heuristic method to estimate the prefactor in the present reduced basis context and demonstrate the efficiency of the approach on test cases stemming from neutronics applications.

The outline of this article is as follows. In Section~\ref{sec:context}, we describe the prototypical reference problem of interest as well as the model order reduction method we use,  which relies on a greedy algorithm to build the reduced basis. The greedy procedure requires {\it a posteriori} estimators which are presented in Section~\ref{sec:apost}. These estimators are basically built as the products of residual norms with prefactors, whose computations are discussed in Section~\ref{sec:prefactor}.
Finally, we provide numerical results on two different examples in Section~\ref{sec:num}: a toy example of a two-dimensional two-group problem, and a two-dimensional simple model of a minicore.

\section{Reduced basis method for non-symmetric generalized eigenvalue problems}
\label{sec:context}

The objective of this section is to introduce the mathematical framework and the model order reduction method we consider. In Section~\ref{sec:high_fidelity}, we describe the reference high-fidelity generalized eigenvalue  problem of interest. The reduced-order model is then presented in Section~\ref{sec:reduced_basis}, and the greedy algorithm used to build the reduced basis is finally explained in Section~\ref{sec:greedy}.

\subsection{Reference high-fidelity problem}\label{sec:high_fidelity}

Let us present the parametrized generalized eigenvalue problem  for which we wish to build a reduced-order problem. Let $\Nh \in \mathbb{N}^*$ be a positive integer which is assumed to be large in our context. In all the following, $\mathbb{R}^\Nh$ is endowed with the Euclidean scalar product\footnote{It is easy to generalize the results presented below to any Hilbertian norm.} denoted by $\langle\cdot,\cdot\rangle$ and associated norm $\|\cdot\|$. For all values of the vector of parameters $\mu$ belonging to the set of parameter values $\mathcal{P} \subset \mathbb R^p$ for some $p\ge 1$, we consider two matrices $A_\mu$ and $B_\mu$ in $\mathbb{R}^{\Nh \times \Nh}$ and the following generalized eigenvalue problem:
Find $(u_{\mu}, \lambda_{\mu}) \in \mathbb{R}^{\Nh} \times \mathbb{C}$ such that $\lambda_\mu$ is an eigenvalue with minimal modulus:
\begin{align}
       A_{\mu}u_{\mu} = \lambda_{\mu} B_{\mu}u_{\mu},
       \quad \|u_{\mu}\| =1.
       \label{eq:hfp_lambda}
\end{align}

As is classical in the context of reduced basis methods, we refer to problem (\ref{eq:hfp_lambda}) as the {\it high-fidelity} (HF) problem. We make the following additional assumption which is satisfied in the problems we are eventually interested in for neutronics applications. 
\begin{assumption}\label{assumption:Perron-Frobenius}
    For any parameter $\mu \, \in \mathcal{P}$, $A_\mu$ is invertible and there exists a unique positive eigenvalue $\lambda_\mu$ which realizes the smallest modulus solution to~\eqref{eq:hfp_lambda}. Moreover, the eigenvalue $\lambda_\mu$ is simple.
\end{assumption}
A consequence of Assumption~\ref{assumption:Perron-Frobenius} is that $u_\mu$ is uniquely defined (up to a sign), $\lambda_\mu$ is real, and that there is a spectral gap between $\lambda_\mu$ and the other eigenvalues solutions to problem~\eqref{eq:hfp_lambda}, a property that will also play a role in the {\it a posteriori} analysis below.
The associated adjoint problem then reads: Find $({u}_{\mu}^*,\lambda_{\mu}) \in \mathbb{R}^{\Nh} \times \mathbb{R}_+^*$ such that
\begin{align}
           A_{\mu}^Tu_{\mu}^* = \lambda_{\mu} B_{\mu}^Tu_{\mu}^*, \quad
           \|u_{\mu}^*\| = 1.
           \label{eq:hfp_lambda_adjoint}
\end{align}
Let us mention here that, for any $A\in \R^{\Nh \times \Nh}$, the adjoint matrix $A^T\in \R^{\Nh\times \Nh}$ is defined relatively to the scalar product $\langle \cdot, \cdot \rangle$ as follows: 
$$
\forall u,v \in \R^\Nh, \quad \langle v, Au \rangle = \langle A^T v, u \rangle. 
$$
Similarly, for any column vector $u\in \R^\cN$, we denote by $u^T$ the unique line vector of $\R^\cN$ such that 
$$
\forall v \in \R^\cN, \quad u^T v = \langle u ,v \rangle. 
$$
From Assumption~\ref{assumption:Perron-Frobenius}, the  eigenvectors $u_\mu$ and $u^*_\mu$ can be chosen real. In practice, the solutions to~\eqref{eq:hfp_lambda} and~\eqref{eq:hfp_lambda_adjoint} are approximated by an inverse power method, which will be properly described in Algorithm~\ref{algorithm:power_it}. 

 We also define for all $\mu \in \mathcal P$ the so-called effective multiplication factor 
\[
    k_{\mu} := \frac{1}{\lambda_{\mu}}.
\]

There holds
\begin{equation}
    \label{eq:k_mu}
     k_{\mu} = \dfrac{\langle u_{\mu}^*,B_{\mu}u_{\mu} \rangle}{\langle u_{\mu}^*,A_{\mu}u_{\mu} \rangle}.
\end{equation}

\begin{remark}
On the one hand, Assumption~\ref{assumption:Perron-Frobenius} holds for instance if $A_{\mu}$ is invertible and the matrix $A_{\mu}^{-1}B_{\mu}$ coming from problem~\eqref{eq:hfp_lambda} satisfies the assumptions of the Perron--Frobenius theorem~\cite{AllaireDespresGolse}. Note that under the assumption that $A_{\mu}$ is invertible, $\lambda_\mu$ is solution to~\eqref{eq:hfp_lambda} if and only if  $ k_{\mu}$ is an eigenvalue associated with the matrix $A_{\mu}^{-1}B_{\mu}$. 
On the other hand, in the context of neutronics applications mentioned earlier and detailed in Section~\ref{sec:physical_context},~\eqref{eq:hfp_lambda} is obtained as an appropriate discretization of a continuous problem where the associated resolvent operator satisfies the assumptions of the Krein--Rutman theorem and thus admits a simple real greatest eigenvalue in modulus denoted $k^{\rm ex}_{\mu}$. Since $1/k^{\rm ex}_{\mu}$ is solution to the continuous problem, the
smallest eigenvalue of~\eqref{eq:hfp_lambda} in modulus is also {expected to be} simple and positive {for fine enough discretization, i.e. large enough $\Nh$}. 
\end{remark}

We now assume in all the rest of the article that Assumption~\ref{assumption:Perron-Frobenius} is satisfied.

\begin{lemma}
\label{lem:lem00}
Under Assumption~\ref{assumption:Perron-Frobenius},  $\displaystyle \langle u_{\mu}^*,A_{\mu} u_{\mu} \rangle \neq 0$.
\end{lemma}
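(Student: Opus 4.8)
The plan is to argue by contradiction: suppose $\langle u_\mu^*, A_\mu u_\mu\rangle = 0$. Since $A_\mu u_\mu = \lambda_\mu B_\mu u_\mu$ by~\eqref{eq:hfp_lambda}, this immediately forces $\lambda_\mu \langle u_\mu^*, B_\mu u_\mu\rangle = 0$, and because $\lambda_\mu > 0$ by Assumption~\ref{assumption:Perron-Frobenius}, we get $\langle u_\mu^*, B_\mu u_\mu\rangle = 0$ as well. The goal is to show that this is incompatible with $\lambda_\mu$ being a \emph{simple} eigenvalue. The mechanism I would exploit is the standard one: for a simple eigenvalue of a generalized eigenvalue problem, the associated left and right eigenvectors cannot be "$B_\mu$-orthogonal," since otherwise the spectral projector onto the corresponding eigenspace would be ill-defined, contradicting simplicity (equivalently, $\lambda_\mu$ would be a multiple root of the characteristic polynomial $\det(A_\mu - \lambda B_\mu)$).

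Concretely, I would pass to the ordinary eigenvalue problem: since $A_\mu$ is invertible, set $C_\mu := A_\mu^{-1} B_\mu$, so that $C_\mu u_\mu = k_\mu u_\mu$ with $k_\mu = 1/\lambda_\mu$, and $\lambda_\mu$ simple means $k_\mu$ is a simple eigenvalue of $C_\mu$. The left eigenvector of $C_\mu$ for $k_\mu$ is $w_\mu := A_\mu^T u_\mu^*$, since $C_\mu^T w_\mu = B_\mu^T A_\mu^{-T} A_\mu^T u_\mu^* = B_\mu^T u_\mu^* = \lambda_\mu^{-1} A_\mu^T u_\mu^* $ wait — more directly, from~\eqref{eq:hfp_lambda_adjoint}, $A_\mu^T u_\mu^* = \lambda_\mu B_\mu^T u_\mu^*$ gives $B_\mu^T u_\mu^* = k_\mu A_\mu^T u_\mu^*$, hence $C_\mu^T (A_\mu^T u_\mu^*) = B_\mu^T (A_\mu^{-1})^T A_\mu^T u_\mu^* = B_\mu^T u_\mu^* = k_\mu A_\mu^T u_\mu^*$, so indeed $w_\mu = A_\mu^T u_\mu^*$ is a left eigenvector of $C_\mu$ for $k_\mu$. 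Now $\langle u_\mu^*, A_\mu u_\mu\rangle = \langle A_\mu^T u_\mu^*, u_\mu\rangle = \langle w_\mu, u_\mu\rangle$, so the assumption $\langle u_\mu^*, A_\mu u_\mu\rangle = 0$ says precisely that the left and right eigenvectors of $C_\mu$ associated with the simple eigenvalue $k_\mu$ are orthogonal — which is the classical impossibility.

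To turn this into a self-contained argument I would invoke the standard linear algebra fact: if $k$ is a simple eigenvalue of a matrix $C$ with right eigenvector $x$ and left eigenvector $y$ (i.e. $C^T y = k y$), then $\langle y, x\rangle \neq 0$. One clean proof of this: decompose $\mathbb{R}^\Nh = \mathrm{span}(x) \oplus \mathrm{Ran}(C - kI)$ — this direct sum holds exactly because $k$ is simple (algebraic multiplicity one, so the generalized eigenspace equals $\mathrm{span}(x)$ and is a complement to $\mathrm{Ran}(C-kI)$). Since $y \perp \mathrm{Ran}(C-kI)$ (because $\langle y, (C-kI)z\rangle = \langle (C^T - kI)y, z\rangle = 0$), if we also had $y \perp x$ then $y \perp \mathbb{R}^\Nh$, forcing $y = 0$, a contradiction with $y$ being an eigenvector.

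The main obstacle, and the only genuinely delicate point, is justifying the direct sum decomposition $\mathbb{R}^\Nh = \mathrm{span}(x)\oplus \mathrm{Ran}(C-kI)$ from simplicity alone — one must be careful to use algebraic (not just geometric) simplicity, which is what Assumption~\ref{assumption:Perron-Frobenius} provides, and check that over $\mathbb{R}$ everything is consistent since $k_\mu$ and its eigenvectors are real. Everything else is bookkeeping: the translation between the generalized problem~\eqref{eq:hfp_lambda}--\eqref{eq:hfp_lambda_adjoint} and the standard problem for $C_\mu = A_\mu^{-1}B_\mu$, and the adjoint identity $\langle u_\mu^*, A_\mu u_\mu\rangle = \langle A_\mu^T u_\mu^*, u_\mu\rangle$. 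I would present the short contradiction argument, possibly citing a standard reference for the simple-eigenvalue biorthogonality fact to keep the exposition light.
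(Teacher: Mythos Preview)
Your proof is correct and takes essentially the same approach as the paper: both argue by contradiction via the direct sum $\mathrm{Span}\{u_\mu\} \oplus (\mathrm{Span}\{A_\mu^T u_\mu^*\})^\perp = \mathbb{R}^{\Nh}$, which holds by simplicity of $k_\mu$ as an eigenvalue of $M = A_\mu^{-1}B_\mu$. The paper establishes this decomposition through the contour-integral spectral projector (Lemma~\ref{lemma:def_spectral_projector}), while you obtain it via the equivalent Fitting-type splitting $\mathbb{R}^{\Nh} = \mathrm{Ker}(M - k_\mu I) \oplus \mathrm{Ran}(M - k_\mu I)$, but the contradiction argument is otherwise identical.
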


We postpone the proof of this lemma after Lemma~\ref{lemma:properties_proj}.

\begin{remark}
Without Assumption~\ref{assumption:Perron-Frobenius}, it is possible that $\langle u_{\mu}^*,A_{\mu} u_{\mu}\rangle = 0$. 
Indeed, a simple example is to take $ A_{\mu}=\begin{pmatrix} 1&-1\\0&1\end{pmatrix}$ and $ B_{\mu}=\begin{pmatrix} 1&0\\0&1\end{pmatrix}$. Let $u_{\mu}=(1,0)$ and $u_{\mu}^*=(0,1)$. 
 Equations~\eqref{eq:hfp_lambda} and~\eqref{eq:hfp_lambda_adjoint} are satisfied with $\lambda_{\mu} = 1$ while $\langle u_{\mu}^*,A_{\mu} u_{\mu}\rangle = 0$. 
\end{remark}

We are interested in situations where one has to solve the reference high-fidelity problem~\eqref{eq:hfp_lambda} quickly and for many values of $\mu$. The idea is to build a reduced basis using some solutions of~\eqref{eq:hfp_lambda} (so-called snapshots) computed offline, and to use a Galerkin method to project the problem~\eqref{eq:hfp_lambda} onto this reduced basis, see Section~\ref{sec:reduced_basis}. This requires {\it a posteriori} estimators to {wisely} select the parameters used to build the reduced basis, as well as to certify the numerical results obtained online on the reduced basis: this is discussed in Section~\ref{sec:apost}.

\subsection{Reduced-order model}
\label{sec:reduced_basis}

The aim of this section is to present the reduced-order model obtained from a given reduced basis to get an approximation of (\ref{eq:hfp_lambda}). 
Let us consider a reduced linear subspace $\V$ of $\mathbb{R}^\Nh$ of dimension $N$ much smaller than~$\Nh$, built in such a way that that any solution of problem~\eqref{eq:hfp_lambda} can be accurately approximated by an element of the space $\V$ (the construction of such a subspace will be discussed in the next section). A reduced-order model for problem~\eqref{eq:hfp_lambda} can then be obtained from the reduced space  $\V$ as follows. Let $\left(\xi_i\right)_{1 \leqslant i \leqslant N}$ be an orthonormal basis of $\V$. 
The reduced matrices $A_{\mu, N}\in \R^{N\times N} , B_{\mu, N}\in \R^{N\times N}$ are defined as follows: for all $1\leq i,j \leq N$,
\begin{align*}
  A_{\mu, N}^{ij}& := \langle \xi_i, A_\mu \xi_j \rangle,\\
    B_{\mu, N}^{ij}& := \langle \xi_i, B_\mu \xi_j \rangle.
\end{align*}
The reduced-order model then consists in solving: Find $(c_{\mu,N},\lambda_{\mu,N}) \in \mathbb{R}^N \times \mathbb{C}$ such that $\lambda_{\mu,N}$ is an eigenvalue with smallest modulus to
\begin{align}
&            A_{\mu,N}c_{\mu,N} = \lambda_{\mu,N} B_{\mu,N}c_{\mu,N},\quad u_{\mu,N} = \sum_{i=1}^N c_{\mu,N}^i\xi_i,
\quad \text{and} \quad
\|u_{\mu,N}\| = 1,
\label{eq:reducedpb}
\end{align}
where for all $1\leq i \leq N$, $c_{\mu,N}^i$ is the $i^{th}$ component of the vector $c_{\mu,N}$.
Similarly as in Section~\ref{sec:high_fidelity} (see~Assumption~\ref{assumption:Perron-Frobenius}), we make the following assumption. 
\begin{assumption}\label{assumption:Perron-Frobenius_reduced_pb}
    For any parameter $\mu \, \in \mathcal{P}$, the matrix $A_{\mu,N}$ is invertible and there exists a unique positive eigenvalue $\lambda_{\mu,N}$ which realizes the smallest modulus solution to~\eqref{eq:reducedpb}. Moreover, the eigenvalue $\lambda_{\mu,N}$ is simple.
\end{assumption}
Under this assumption, $c_{\mu,N}$ and $u_{\mu,N}$ are uniquely defined up to a sign and $\lambda_{\mu,N}$ is real. Endowing the space $\R^N$ with the canonical Euclidean scalar product $\langle \cdot, \cdot \rangle_{\ell^2}$, we can consider the solution to the associated reduced adjoint problem: 
Find $(c_{\mu,N}^*,\lambda_{\mu,N}) \in \mathbb{R}^N \times \mathbb{R}_+^*$ such that the eigenvalue $\lambda_{\mu,N}$ is the smallest in modulus and
\begin{align}
&            A_{\mu,N}^t c_{\mu,N}^* = \lambda_{\mu,N} B_{\mu,N}^t c_{\mu,N}^*,
\quad u_{\mu,N}^* = \sum_{i=1}^N c_{\mu,N}^{*,i}\xi_i, \quad \text{and} \quad \|u_{\mu,N}^*\|=1.
\label{eq:reducedpb_adjoint}
\end{align}
where for all $1\leq i \leq N$, $c_{\mu,N}^{*,i}$ is the $i^{th}$ component of the vector $c_{\mu,N}^{*}$ and $A_{\mu,N}^t$ and $ B_{\mu,N}^t$ are respectively the transpose of the matrix $A_{\mu,N}$ and  $ B_{\mu,N}$.
Moreover, under this assumption, we have $\displaystyle \langle c_{\mu,N}^*,A_{\mu,N} c_{\mu,N} \rangle_{\ell^2} =  \langle u_{\mu,N}^*,A_{\mu} u_{\mu,N} \rangle \neq 0$ {(see Lemma~\ref{lem:lem00})}, 
and we define
\begin{equation}
    \label{eq:k_N}
     k_{\mu,N} = \dfrac{\langle c_{\mu,N}^*,B_{\mu,N}c_{\mu,N} \rangle_{\ell^2}}{\langle c_{\mu,N}^*,A_{\mu,N}c_{\mu,N} \rangle_{\ell^2}}
     = \dfrac{\langle u_{\mu,N}^*,B_{\mu}u_{\mu,N} \rangle}{\langle u_{\mu,N}^*,A_{\mu}u_{\mu,N} \rangle}. 
\end{equation}

In practice, we use the inverse power method to solve~\eqref{eq:reducedpb} and~\eqref{eq:reducedpb_adjoint}.  If both algorithms converge, we refer to the outputs~$c_{\mu,N}$ and $c^*_{\mu,N}$ as the right and left eigenvectors of the reduced problem. If one of the power methods does not converge, the reduced basis is enriched using the high-fidelity left and right eigenvectors for the considered parameter value, see the construction of the reduced space in the next section. Note that the power methods applied to~\eqref{eq:reducedpb} and~\eqref{eq:reducedpb_adjoint} (resp. to~\eqref{eq:hfp_lambda} and~\eqref{eq:hfp_lambda_adjoint}) are guaranteed to converge if  Assumption~\ref{assumption:Perron-Frobenius_reduced_pb} (resp. Assumption~\ref{assumption:Perron-Frobenius}) is satisfied.
In the numerical examples presented in Section~\ref{sec:num}, we observe that the two power methods (for the direct and adjoint reduced eigenvalue problems) indeed converge and that $\displaystyle \langle c_{\mu,N}^*,A_{\mu,N} c_{\mu,N} \rangle_{\ell^2} \neq 0$ as soon as the reduced space has a sufficiently large dimension (typically $N \ge 4$ is sufficient in the numerical results presented in Section~\ref{sec:num}).

\subsection{Choice of the reduced space: greedy algorithm}\label{sec:greedy}

In practice, the reduced space $\V$ used in the reduced-order model described in the previous section is built following the standard procedure of the reduced basis technique~\cite{maday2002priori}. We first 
initialize the reduced space~$\V_0$ as a very low-dimensional space spanned by the first modes obtained from a Proper Orthogonal Decomposition of a family of vectors composed of a few snapshots of the direct and adjoint problems. 
A sequence of parameter values $(\mu_n)_{n\geq 1}$ is then selected from a greedy procedure described below, from which nested reduced spaces $(\V_n)_{n\geq 1}$ are built 
as follows: 
\begin{equation}
    \forall n\geq 1, \quad \V_n = \text{Span}\left\{u_{\mu_1},\ldots,u_{\mu_n},u^*_{\mu_1},\ldots,u^*_{\mu_n}\right\}. \label{eq:VN_def}
\end{equation}
In the following, we denote by $N_n:= {\rm dim} \V_n$ and by $u_{\mu, N_n}$, $u_{\mu, N_n}^*$, $\lambda_{\mu,N_n}$ and $k_{\mu, N_n}$ the solutions of the reduced eigenvalue problems described in the previous section for $\V = \V_n$. 

\begin{remark}
 The choice made  in~\eqref{eq:VN_def} to enrich the sequence of reduced spaces with both the eigenvector of the direct and of the adjoint eigenvalue problem stems from the {\it a priori} error analysis of Galerkin approximations of generalized eigenvalue problems (see~\cite{babuvska1991eigenvalue,Boffi2010-wh}). Indeed, in an asymptotic regime, the error between the approximate and exact eigenvalue scales like
 $$
 |\lambda_\mu - \lambda_{\mu,N}| \leq C_\mu \frac{1}{\gamma_{\mu,N}} \varepsilon_N \varepsilon_N^*,
 $$
 where $C_\mu$ is a positive constant which only depends on the parameter $\mu$, where
 \begin{align*}
    &   \varepsilon_N := \underset{v_N \in \V}{\inf} \left\|u_{\mu}-v_N\right\|, \\
    &   \varepsilon_N^* := \underset{v_N \in \V}{\inf} \left\|u_{\mu}^*-v_N\right\|,
\end{align*}
and where $\gamma_{\mu,N}$ is the inf-sup constant of the reduced eigenvalue problem. Hence, it appears natural when it comes to the design of a greedy procedure in the present reduced basis context to enrich the Galerkin approximation space with snapshots of both the direct and adjoint eigenvalue problems, in order to at least get the reference solution for the reduced problem when considering the parameter $\mu$ of the snapshots.
\end{remark}

In the greedy procedure, the parameters $(\mu_n)_{\geq 1}$ need to be selected satisfying some criteria. In practice, it is common to choose a finite subset $\cP_{\rm train} \subset \cP$ of parameter values, called hereafter a {\itshape training set} and selecting the snapshots maximizing some error surrogate for the error between solutions of the reference model and the reduced model $\Delta_{N_n}$, as is described in Algorithm~\ref{algorithm:alg2}.
In an {\itshape ideal greedy} procedure, we would
take the exact error as the error surrogate
$\Delta_{N_n}$.
In that case, two possible choices for the definition of $\Delta_{N_n}$ would be:
\begin{itemize}
    \item [a)] either the error on the eigenvalue: $\Delta_{N_n}(\mu):= e^k_{N_n}(\mu)$
    \item [b)] or the error on the eigenvectors: $\Delta_{N_n}(\mu):= e^u_{N_n}(\mu) + e^{u^*}_{N_n}(\mu)$
\end{itemize}
with
$$
e^u_{N_n}(\mu) := \| u_\mu - u_{\mu, N_n}\|, \;  e^{u^*}_{N_n}(\mu) := \| u^*_\mu - u^*_{\mu, N_n}\| \; \mbox{ and } e^{k}_{N_n}(\mu) := | k_\mu - k_{\mu, N_n}|.
$$

However, these quantities are of course not available in general, so one has to resort to {\it a posteriori} error estimate for an {\itshape efficient greedy } algorithm. Therefore, the aim of the following section is to detail different strategies to define an {\it a posteriori} error estimator $\Delta_{N}(\mu)$ in order to obtain an estimation of the errors on the eigenvalues and the eigenvectors for any reduced space $\V$ without having to compute the solutions of the exact eigenvalue problem.

\begin{algorithm}
\caption{\textsc{Greedy Algorithm}}
\label{algorithm:alg2}
\begin{algorithmic}
     \STATE {\bf Input:} $\cP_{\text{train}} \subset \cP$: training set of parameters, $\tau>0$ : error tolerance threshold, $\V_0 \subset \R^{\mathcal N}$: initial reduced space
    \STATE{$N_0:= {\rm dim} \V_0$}
    \STATE{$\tau_0:=  \underset{\mu \in \cP_{\text{train}}}{\rm max } \; \Delta_{N_0}(\mu)$}
     \STATE{$n:=0$}
    \WHILE{$\tau_n > \tau$}
    \STATE{$\mu_{n+1}:= \underset{\mu \in \cP_{\text{train}}}{\rm argmax } \; \Delta_{N_n}(\mu)$ } 
    \STATE{Compute $u_{\mu_{n+1}}$ and $u^*_{\mu_{n+1}}$. }
   \STATE{$\V_{n+1}:= \V_n + {\rm Span}\{u_{\mu_{n+1}}, u^*_{\mu_{n+1}}\}$}
   \STATE{$N_{n+1}:= {\rm dim}\V_{n+1}$}
   \STATE{$\tau_{n+1}:= \underset{\mu \in \cP_{\text{train}}}{\rm max } \; \Delta_{N_{{n+1}}}(\mu)$}
   \STATE{$n:= n+1$}
    \ENDWHILE
    \STATE {\bf Output:} Reduced space $\V := \V_{n} \subset \R^{\mathcal N}$ 
\end{algorithmic}
\end{algorithm}

\section{{\it A posteriori} error estimation}
\label{sec:apost}

The goal of this section is to build {\it a posteriori} error bounds on the error between the solutions of the exact eigenvalue problems (\ref{eq:hfp_lambda}) and (\ref{eq:hfp_lambda_adjoint}), and the solutions of the reduced eigenvalue problems (\ref{eq:reducedpb}) and (\ref{eq:reducedpb_adjoint}). 

This section is organized as follows. Sections~\ref{sec:err_eigvec} and~\ref{sec:err_eigval} are respectively dedicated to error estimates on the eigenvectors and on the eigenvalue.  In Section~\ref{sec:theory}, we draw connections between the estimators we introduce in our context of non-symmetric eigenvalue problems, and the classical ones used for symmetric eigenvalue problems. Finally, we introduce in Section~\ref{sec:practical} the pratical {\em a posteriori} error estimators that will be used for numerical experiments in Section~\ref{sec:num}.

To simplify notation, the subscript $\mu$ is omitted in this section, as only one parameter value $\mu\in \cP$ is considered. Therefore, the quantities $u,u^*,\lambda,k$ are the solutions of the high fidelity problem, while $u_N,u_N^*,\lambda_N,k_N$ are the solutions of the reduced problem. We are therefore aiming at deriving bounds for the quantities
$e^k_N:=\left|k-k_N\right|$, $e^u_N:=\lVert u-u_N\rVert$, and $e^{u^*}_N:=\lVert u^*-u_N^*\rVert$. 
In order to estimate these errors, we first define the following residual vector quantities
\begin{align}
  &		 R_N = \left(B-k_N A\right)u_N, \label{eq:residual_R}\\
  &		 R_N^* = \left(B^T-k_N A^T\right)u_N^*. \label{eq:residual_Rstar}
\end{align}
We moreover define the vector 
\begin{equation}
	\tilde{u}^* = \dfrac{A^Tu^*}{\left\|A^Tu^*\right\|}, \label{eq:tilde_ustar}
\end{equation}
and the matrix
\begin{equation}
    \M = A^{-1} B,
\end{equation}
which is well-defined since $A$ is invertible from Assumption~\ref{assumption:Perron-Frobenius}. Note that it then holds that
\[
    M u = k u, \quad M^T \tilde{u}^* = k \tilde{u}^*.
\]

\subsection{Error estimates on the eigenvectors}\label{sec:err_eigvec}

Let $P \in \R^{\Nh\times\Nh}$ and $P^*\in \R^{\Nh\times\Nh}$ be the matrices associated with the spectral projection operators onto $\text{Span}\{\tilde{u}^*\}^{\perp}$ and $\text{Span}\{u\}^{\perp}$ respectively. More precisely, $P$ and $P^*$ are defined by
\begin{align}
&    P = I - \dfrac{u (\tilde{u}^*)^T}{\langle u, \tilde{u}^*\rangle},\label{eq:projector}\\
&    P^* = I - \dfrac{\tilde{u}^*  u^T}{\langle u, \tilde{u}^*\rangle},\label{eq:projector*}
\end{align}
where $I$ denotes the identity matrix of $\R^{\cN \times \cN}$. 
Before presenting the {\it a posteriori} error estimates, 
we first begin by collecting a few useful auxiliary lemmas. 

\begin{lemma}\label{lemma:def_spectral_projector}
    The spectral projector {onto the eigenspace of $M$ associated with the simple eigenvalue $k$} is $I-P$, where $P$ is defined by~\eqref{eq:projector}.
\end{lemma}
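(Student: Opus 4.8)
The plan is to verify directly that $I-P$ is indeed the spectral projector associated with the simple eigenvalue $k$ of $M$, using the characterization that a spectral projector onto a one-dimensional eigenspace is determined by: it is idempotent, its range is the right-eigenspace $\mathrm{Span}\{u\}$, and its kernel is the $M$-invariant complement, namely the range of the complementary spectral projector, which for a simple eigenvalue equals $\mathrm{Span}\{\tilde u^*\}^{\perp}$ (the orthogonal complement of the corresponding left eigenvector). Explicitly, from~\eqref{eq:projector} we have
\[
I - P = \frac{u\,(\tilde u^*)^T}{\langle u,\tilde u^*\rangle}.
\]
First I would note this is well-defined: $\langle u,\tilde u^*\rangle \neq 0$ because, by~\eqref{eq:tilde_ustar}, $\tilde u^*$ is proportional to $A^T u^*$, so $\langle u,\tilde u^*\rangle$ is proportional to $\langle u, A^T u^*\rangle = \langle u^*, A u\rangle = \langle u_\mu^*, A_\mu u_\mu\rangle$, which is nonzero by Lemma~\ref{lem:lem00}.

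Next I would check idempotency: $(I-P)^2 = \dfrac{u\,(\tilde u^*)^T u\,(\tilde u^*)^T}{\langle u,\tilde u^*\rangle^2} = \dfrac{u\,\langle \tilde u^*,u\rangle\,(\tilde u^*)^T}{\langle u,\tilde u^*\rangle^2} = I-P$, using $(\tilde u^*)^T u = \langle \tilde u^*, u\rangle = \langle u, \tilde u^*\rangle$. Hence $I-P$ is a projector of rank one, with range $\mathrm{Span}\{u\}$ and kernel $\{v : (\tilde u^*)^T v = 0\} = \mathrm{Span}\{\tilde u^*\}^{\perp}$, which matches the description of $P^*$'s range / $P$'s complement given in the excerpt. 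Then I would verify that $I-P$ commutes with $M$, or equivalently that both $\mathrm{Span}\{u\}$ and $\mathrm{Span}\{\tilde u^*\}^{\perp}$ are $M$-invariant: invariance of $\mathrm{Span}\{u\}$ is immediate from $Mu = ku$; invariance of $\mathrm{Span}\{\tilde u^*\}^{\perp}$ follows because if $(\tilde u^*)^T v = 0$ then $(\tilde u^*)^T M v = (M^T \tilde u^*)^T v = k\,(\tilde u^*)^T v = 0$, using $M^T\tilde u^* = k\tilde u^*$ as recorded just before Section~\ref{sec:err_eigvec}. A rank-one idempotent commuting with $M$ whose range is the eigenspace for the simple eigenvalue $k$ is, by uniqueness of the spectral (Riesz) projection, exactly that spectral projector; alternatively one writes $\R^{\cN} = \mathrm{Span}\{u\} \oplus \mathrm{Span}\{\tilde u^*\}^{\perp}$ as a direct sum into $M$-invariant subspaces with the first carrying eigenvalue $k$ and the second not containing $k$ in its spectrum (simplicity of $k$), and $I-P$ is the projection onto the first along the second.

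I do not expect a serious obstacle here; the only point requiring a little care is justifying $\langle u,\tilde u^*\rangle \neq 0$ (which is why Lemma~\ref{lem:lem00} is needed and explains the placement of its proof after this lemma's companion Lemma~\ref{lemma:properties_proj}) and the identification of $\mathrm{Span}\{\tilde u^*\}^{\perp}$ with the complementary invariant subspace, which uses that $k$ is a \emph{simple} eigenvalue of $M$ so that its algebraic and geometric multiplicities coincide and the left eigenvector $\tilde u^*$ spans the one-dimensional left eigenspace. Everything else is a direct computation with the rank-one formula.
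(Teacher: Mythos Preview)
Your proof is correct but follows a genuinely different route from the paper's. The paper starts from the \emph{Riesz contour-integral} definition of the spectral projector,
\[
P_{\rm int} = \int_{\mathcal C_k} (z-M)^{-1}\,dz,
\]
and shows that $P_{\rm int}$ has range ${\rm Span}\{u\}$ and kernel $[{\rm Span}\{\tilde u^*\}]^\perp$ (the latter via $P_{\rm int}^T\tilde u^* = \tilde u^*$), from which the direct-sum decomposition $\R^{\cN} = {\rm Span}\{u\} \oplus [{\rm Span}\{\tilde u^*\}]^\perp$ and the identification $P_{\rm int}=I-P$ follow. You instead take the rank-one formula for $I-P$ as the starting point and verify idempotency, range, kernel, and $M$-invariance by hand, then appeal to uniqueness of the spectral projection.

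The trade-off is this: your argument is more elementary (no complex analysis), but it relies on $\langle u,\tilde u^*\rangle\neq 0$ as an \emph{input}, which you justify by invoking Lemma~\ref{lem:lem00}. In the paper's logical order, however, the proof of Lemma~\ref{lem:lem00} is deferred until after Lemma~\ref{lemma:properties_proj} precisely because it uses the decomposition ${\rm Span}\{u\} + [{\rm Span}\{\tilde u^*\}]^\perp = \R^{\cN}$, which is established inside the proof of Lemma~\ref{lemma:def_spectral_projector} via the Riesz projector (which is well-defined without knowing $\langle u,\tilde u^*\rangle\neq 0$). So the paper's route yields $\langle u,\tilde u^*\rangle\neq 0$ as a \emph{byproduct}, whereas your route would make the paper's structure circular unless you supply an independent proof of Lemma~\ref{lem:lem00} (which is easy---e.g., via ${\rm Ran}(M-kI)=[{\rm Span}\{\tilde u^*\}]^\perp$ and simplicity of $k$---but is not the argument the paper gives).
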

\begin{proof}
    Let us introduce the spectral projector $P_{\rm int}\in \R^{\cN \times \cN}$ of $M$ associated with the eigenvalue $k$, which is defined by: 
    \begin{equation*}
        \forall v \in \R^\cN, \quad P_{\rm int} v = \displaystyle \int_{{\mathcal C_{k}}} (z-M)^{-1}v \, dz, 
    \end{equation*}
    where $\mathcal C_k$ is a closed contour in the complex plane such that $k$ is the only eigenvalue of $M$ contained inside the loop. Let us show that $P_{\rm int}=\dfrac{u (\tilde{u}^*)^T}{\langle u, \tilde{u}^*\rangle}$.
    As the eigenvalue $k$ is simple, it holds that ${\rm Ran}\; P_{\rm int} = {\rm Span}\{u\}$, and $P_{\rm int}^T \tilde{u}^*=\tilde{u}^*$ by noting that $P_{\rm int}^T$ is the spectral projector associated with $M^T$ and the eigenvalue $k$. Let us show that ${\rm Ker}\; P_{\rm int} = ({\rm Span}\{{\tilde u}^*\})^{\perp}$. Indeed, for all $v\in \R^\cN$,
    \begin{align*}
        P_{\rm int}v = 0 \iff \langle \tilde{u}^*,v \rangle = \langle P_{\rm int}^T\tilde{u}^*,v \rangle = \langle \tilde{u}^*, P_{\rm int}v \rangle = 0 .
    \end{align*}
As ${\rm Ran}\; P_{\rm int} + {\rm Ker}\; P_{\rm int} = \mathbb{R}^{\mathcal{N}}$, we have ${\rm Span} \{u\} + [{\rm Span} \{\tilde u^*\}]^\perp = \R^\Nh$. The identity 
{$P_{\rm int} = I - P$ }
is then an immediate consequence of this decomposition. 
\end{proof}

\begin{lemma}\label{lemma:properties_proj}
There holds 
\begin{enumerate}
    \item[(i)] $P^2=P$;
    \item[(ii)] ${\rm Ker}\; P={\rm Span}\{u\}$, ${\rm Ran}\; P=[{\rm Span}\{\tilde{u}^*\}]^{\perp}$ and these two spaces are stable by $P$ and $M$;
    \item[(iii)] $\M P=P\M$.
\end{enumerate}
\end{lemma}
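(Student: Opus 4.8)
The plan is to verify the three claims in order, working directly from the definition~\eqref{eq:projector} of $P$ and from Lemma~\ref{lemma:def_spectral_projector}, which already identifies $I-P$ as the spectral projector $P_{\rm int}$ of $M$ onto $\mathrm{Span}\{u\}$.

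\textbf{Item (i).} First I would compute $P^2$ directly. Writing $P = I - Q$ with $Q = \dfrac{u(\tilde u^*)^T}{\langle u,\tilde u^*\rangle}$, it suffices to check that $Q$ is idempotent, since then $P^2 = I - 2Q + Q^2 = I - Q = P$. The identity $Q^2 = Q$ follows from the one-line computation
\[
Q^2 = \frac{u(\tilde u^*)^T u (\tilde u^*)^T}{\langle u,\tilde u^*\rangle^2} = \frac{u \,\langle \tilde u^*, u\rangle\,(\tilde u^*)^T}{\langle u,\tilde u^*\rangle^2} = \frac{u(\tilde u^*)^T}{\langle u,\tilde u^*\rangle} = Q,
\]
using that $(\tilde u^*)^T u = \langle \tilde u^*, u\rangle = \langle u,\tilde u^*\rangle$. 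Note this computation implicitly requires $\langle u,\tilde u^*\rangle \neq 0$; this is exactly the content of Lemma~\ref{lem:lem00} (since $\tilde u^* = A^T u^*/\|A^T u^*\|$, we have $\langle u,\tilde u^*\rangle = \langle u, A^T u^*\rangle/\|A^T u^*\| = \langle A_\mu u_\mu, u^*_\mu\rangle / \|A^T u^*\|$, which is nonzero), but since the excerpt announces Lemma~\ref{lem:lem00} will be proved \emph{after} Lemma~\ref{lemma:properties_proj}, I would instead invoke Lemma~\ref{lemma:def_spectral_projector}, which presupposes $\langle u,\tilde u^*\rangle \neq 0$ in writing $P_{\rm int} = u(\tilde u^*)^T/\langle u,\tilde u^*\rangle$. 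In fact, the cleanest route to (i) is simply: by Lemma~\ref{lemma:def_spectral_projector}, $I-P = P_{\rm int}$ is a spectral projector, hence $(I-P)^2 = I-P$, which expands to $P^2 = P$.

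\textbf{Item (ii).} Since $I-P = P_{\rm int}$ is the spectral projector of $M$ onto the eigenspace $\mathrm{Span}\{u\}$ along $[\mathrm{Span}\{\tilde u^*\}]^\perp$ (this is established in the proof of Lemma~\ref{lemma:def_spectral_projector}: $\mathrm{Ran}\,P_{\rm int} = \mathrm{Span}\{u\}$ and $\mathrm{Ker}\,P_{\rm int} = [\mathrm{Span}\{\tilde u^*\}]^\perp$), we immediately get $\mathrm{Ker}\,P = \mathrm{Ran}(I-P) = \mathrm{Span}\{u\}$ and $\mathrm{Ran}\,P = \mathrm{Ker}(I-P) = [\mathrm{Span}\{\tilde u^*\}]^\perp$, using the elementary fact that for a projector $\Pi$ one has $\mathrm{Ker}\,\Pi = \mathrm{Ran}(I-\Pi)$ and $\mathrm{Ran}\,\Pi = \mathrm{Ker}(I-\Pi)$. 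Stability under $P$ is clear: $P$ maps $\mathrm{Ran}\,P$ into itself by definition, and maps $\mathrm{Ker}\,P$ to $\{0\} \subset \mathrm{Ker}\,P$. Stability under $M$ follows because $\mathrm{Span}\{u\}$ is an eigenspace of $M$ ($Mu = ku$), and $[\mathrm{Span}\{\tilde u^*\}]^\perp$ is stable under $M$ since for $v$ with $\langle \tilde u^*, v\rangle = 0$ we have $\langle \tilde u^*, Mv\rangle = \langle M^T \tilde u^*, v\rangle = k\langle \tilde u^*, v\rangle = 0$, using $M^T \tilde u^* = k\tilde u^*$ (stated just before Section~\ref{sec:err_eigvec}).

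\textbf{Item (iii).} The commutation $MP = PM$ is the standard fact that a matrix commutes with the spectral projectors associated with its invariant subspaces. Concretely, $MP = M(I - P_{\rm int}) = M - MP_{\rm int}$ and $PM = (I-P_{\rm int})M = M - P_{\rm int}M$, so it suffices to show $M P_{\rm int} = P_{\rm int} M$. This can be read off the contour-integral definition of $P_{\rm int}$ in the proof of Lemma~\ref{lemma:def_spectral_projector}, since $M$ commutes with $(z-M)^{-1}$ for every $z$ in the resolvent set, hence with the integral. Alternatively — and this is perhaps the more self-contained argument given the explicit formula — one computes both sides directly from $P_{\rm int} = u(\tilde u^*)^T/\langle u,\tilde u^*\rangle$: on one hand $M P_{\rm int} = (Mu)(\tilde u^*)^T/\langle u,\tilde u^*\rangle = k\,u(\tilde u^*)^T/\langle u,\tilde u^*\rangle$, and on the other hand $P_{\rm int} M = u(M^T\tilde u^*)^T/\langle u,\tilde u^*\rangle = k\,u(\tilde u^*)^T/\langle u,\tilde u^*\rangle$, where I used $(\tilde u^*)^T M = (M^T \tilde u^*)^T = (k\tilde u^*)^T$. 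The two expressions agree, so $MP_{\rm int} = P_{\rm int}M$ and therefore $MP = PM$.

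I do not anticipate a genuine obstacle here: every assertion reduces to a one- or two-line manipulation once Lemma~\ref{lemma:def_spectral_projector} is in hand. The only point requiring a little care is the logical ordering — Lemma~\ref{lem:lem00} (nonvanishing of $\langle u^*, Au\rangle$, equivalently of $\langle u,\tilde u^*\rangle$) is deferred to after this lemma, so I would make sure every step is justified either by Lemma~\ref{lemma:def_spectral_projector} directly or by the eigenrelations $Mu = ku$, $M^T\tilde u^* = k\tilde u^*$, rather than by re-deriving idempotency from the rank-one formula in a way that tacitly uses $\langle u,\tilde u^*\rangle \neq 0$. Using Lemma~\ref{lemma:def_spectral_projector} as a black box sidesteps this cleanly, since that lemma's statement already encodes the well-definedness of $P$.
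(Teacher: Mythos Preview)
Your proposal is correct and, for items (i) and (ii), essentially mirrors the paper's proof: the paper expands $P^2v$ directly using $Pu=0$ (your $Q^2=Q$ computation repackaged) and reads off $\mathrm{Ker}\,P$, $\mathrm{Ran}\,P$ and their $M$-stability exactly as you do, via Lemma~\ref{lemma:def_spectral_projector} and the relation $M^T\tilde u^* = k\tilde u^*$.

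For item (iii) the routes diverge slightly. You compute $MP_{\rm int} = P_{\rm int}M$ directly from the rank-one formula and the eigenrelations (or from the resolvent integral), then deduce $MP=PM$. The paper instead works on the decomposition $\R^{\mathcal N} = \mathrm{Ker}\,P \oplus \mathrm{Ran}\,P$: on $\mathrm{Ker}\,P$ both $MPv$ and $PMv$ vanish, and on $\mathrm{Ran}\,P$ one has $Pv=v$ and $Mv\in\mathrm{Ran}\,P$ by the stability from (ii), so $PMv=Mv=MPv$. Your argument is more self-contained and avoids re-invoking (ii); the paper's version makes the invariant-subspace structure explicit, which is consonant with how the Moore--Penrose inverse $(PMP-k_NI)^+$ is defined immediately afterwards. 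Both are equally short.

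Your caution about logical ordering is apt but not strictly necessary: the definition~\eqref{eq:projector} of $P$ already presupposes $\langle u,\tilde u^*\rangle\neq 0$, so the paper's own proof tacitly relies on this as well.
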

\begin{proof}
\begin{itemize}
\item[(i)] Let $v\in \R^{\Nh}$. Noting that $P u =0$, there holds
\[
        P^2 v  = P\left(v-\dfrac{\langle v,\tilde{u}^*\rangle}{\langle u, \tilde{u}^*\rangle}u\right) = P v - \dfrac{\langle v,\tilde{u}^*\rangle}{\langle u, \tilde{u}^*\rangle}P u = P v ,
\]
hence $P^2 = P$. 

\item[(ii)] The proof of the fact that ${\rm Ker}\; P={\rm Span}\{u\}$ and ${\rm Ran}\; P=[{\rm Span}\{\tilde{u}^*\}]^{\perp}$ is immediate from the proof of the previous lemma. The fact that ${\rm Ker}\; P$ is stable by $P$ and $M$ is also obvious. Now, let $v\in {\rm Span}\{\tilde{u}^*\}^{\perp}$, i.e. such that $\langle \tilde{u}^*, v\rangle = 0$. Then
\begin{align*}
    \langle \tilde{u}^*, P v\rangle & = 
    \langle \tilde{u}^*, v\rangle - \langle \tilde{u}^*, v\rangle \frac{\langle \tilde{u}^*, u\rangle}{\langle \tilde{u}^*, u\rangle}  = 0,
\end{align*}
and
\begin{align*}
    \langle \tilde{u}^*, M v\rangle & = 
     \langle M^T \tilde{u}^*, v\rangle = 
     k \langle \tilde{u}^*, v\rangle  = 0.
\end{align*}
Therefore $Pv \in {\rm Span} \{\tilde{u}^*\}^{\perp}$ and $\M v \in {\rm Span} \{\tilde{u}^*\}^{\perp}$.
\item[(iii)] It is obvious that for all $v\in {\rm Ker}\; P$, $PMv = MPv = 0$. Besides, for all $v\in {\rm Ran}\;P$, it holds that $Pv = v$, and $Mv \in {\rm Ran}\;P$ from (ii) so that $PMv = Mv$. As a consequence, $PMv = Mv = MPv$ for any $v\in \R^\Nh$, hence the desired result. 
\end{itemize}
\end{proof}

It is easy to check that the following Lemma holds on $P^*$, using similar arguments as in the proof of Lemma~\ref{lemma:properties_proj}.
\begin{lemma}\label{lemma:properties_proj_star}
There holds
\begin{itemize}
    \item[(i)] $(P^*)^2=P^*$;
    \item[(ii)] ${\rm Ker }\; P^* = {\rm Span}\{\tilde u^*\}$, ${\rm Ran}\; P^* = [{\rm Span} \{u\}]^\perp $ and these two spaces are stable by $P^*$ and $M^T$;  
    \item[(iii)] $M^T P^* = P^* M^T$.
\end{itemize}
\end{lemma}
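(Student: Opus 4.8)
The plan is to exploit the fact that $P^*$ is nothing but the transpose of $P$. Transposing the rank-one update in~\eqref{eq:projector} gives
\[
P^T = I - \dfrac{\left(u(\tilde u^*)^T\right)^T}{\langle u,\tilde u^*\rangle} = I - \dfrac{\tilde u^* u^T}{\langle u,\tilde u^*\rangle} = P^*,
\]
so each statement of the lemma can be obtained from the corresponding statement of Lemma~\ref{lemma:properties_proj} by taking adjoints. Concretely, $(P^*)^2 = (P^T)^2 = (P^2)^T = P^T = P^*$ gives~(i); and transposing the identity $\M P = P \M$ from Lemma~\ref{lemma:properties_proj}(iii) yields $P^T M^T = M^T P^T$, i.e. $M^T P^* = P^* M^T$, which is~(iii).

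For~(ii), I would use the elementary identities $\mathrm{Ker}(N^T) = (\mathrm{Ran}\,N)^\perp$ and $\mathrm{Ran}(N^T) = (\mathrm{Ker}\,N)^\perp$, valid for any $N \in \R^{\Nh\times\Nh}$, applied to $N = P$. Combined with Lemma~\ref{lemma:properties_proj}(ii) and the involutivity $(E^\perp)^\perp = E$ of the orthogonal complement on subspaces of $\R^\Nh$, this gives $\mathrm{Ker}\,P^* = (\mathrm{Ran}\,P)^\perp = \mathrm{Span}\{\tilde u^*\}$ and $\mathrm{Ran}\,P^* = (\mathrm{Ker}\,P)^\perp = [\mathrm{Span}\{u\}]^\perp$. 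Stability of these two subspaces under $P^*$ is automatic, being the kernel and range of a projector. Stability under $M^T$ follows from $M^T \tilde u^* = k \tilde u^*$ for the first subspace, and, for the second, from $\langle u, M^T v\rangle = \langle \M u, v\rangle = k\langle u, v\rangle$, which vanishes whenever $\langle u, v\rangle = 0$.

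Alternatively, one can repeat verbatim the computation proving Lemma~\ref{lemma:properties_proj}, swapping the roles of $u$ and $\tilde u^*$ and replacing $\M$ by $M^T$ throughout (using $P^* u = 0$ in place of $P u = 0$, etc.). Either route is routine; I do not expect any real obstacle here. The only point needing a little care is keeping track of which vector plays which role after transposition, and the observation $P^* = P^T$ removes even that, so I would present the short adjoint argument above.
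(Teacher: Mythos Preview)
Your proof is correct. The paper does not give a detailed proof either: it simply states that the lemma ``holds on $P^*$, using similar arguments as in the proof of Lemma~\ref{lemma:properties_proj}'', i.e.\ it invites the reader to rerun the direct computation with $u$ and $\tilde u^*$ swapped and $M$ replaced by $M^T$ --- exactly the alternative you mention at the end. Your primary route via the observation $P^* = P^T$ is a slight streamlining of this: it turns each item into a one-line transposition of the corresponding item in Lemma~\ref{lemma:properties_proj}, rather than a repeated calculation, and makes the duality between the two lemmas completely transparent. Both approaches are routine and essentially equivalent here.
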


{We have now gathered enough results to prove Lemma~\ref{lem:lem00}.}

\begin{proof}[Proof of Lemma~\ref{lem:lem00}] 
{Let us argue by contradiction. If $\langle u_{\mu}^*,A_{\mu} u_{\mu} \rangle = \langle\tilde{u}_{\mu}^*, u_{\mu} \rangle = 0$, then $\text{Span}\{u_{\mu}\} \subset (\text{Span}\{\tilde{u}_{\mu}^*\})^{\perp}$.
Yet, we have $\text{Span}\{u_{\mu}\} + (\text{Span}\{\tilde{u}_{\mu}^*\})^{\perp} = \mathbb{R}^{\mathcal{N}}$ using Lemma~\ref{lemma:properties_proj}-(\textit{ii}).
This yields to a contradiction and concludes the proof.}
\end{proof}

Let us introduce some notation. By Lemma~\ref{lemma:properties_proj}, the operator $PMP-k_NI$ leaves ${\rm Ran P} = [{\rm Span}\{\tilde{u}^*\}]^\perp$ invariant. 
Besides, provided that $k_N \notin \sigma\left(PMP|_{[{\rm Span\{\tilde{u}^*\}]^\perp}}\right)$, the operator $\left(PMP-k_NI\right)|_{[{\rm Span}\{\tilde{u}^*\}]^\perp}$ is invertible, 
seen as an operator from $[{\rm Span}\{\tilde{u}^*\}]^\perp$ onto $[{\rm Span}\{\tilde{u}^*\}]^\perp$. 
We can thus define the Moore--Penrose inverse of this operator, denoted by $\left(PMP-k_NI\right)^+$ and defined (by linearity) as follows: 
\begin{align*}
\forall v \in [{\rm Span}\{\tilde{u}^*\}]^\perp, & \left(PMP-k_NI\right)^+v = \left(PMP-k_NI\right)|_{[{\rm Span}\{\tilde{u}^*\}]^\perp}^{-1}v,\\
\forall v \in {\rm Span}\{u\}, & \left(PMP-k_NI\right)^+v=0.\\ 
\end{align*}
We define in a similar way the operator $\left(P^* M^T P^*-k_NI\right)^+$.

\begin{proposition}[Eigenvector estimation]
\label{prop:prop_proj}
Let $u_N,u_N^* \in \R^\Nh\setminus\{0\}$ and let $k_N\in \R$ such that $k_N\notin \sigma((PMP)|_{[{\rm Span}\{ \tilde{u}^*\}]^\perp})$ and $k_N\notin \sigma((P^*M^TP^*)|_{[{\rm Span}\{ u\}]^\perp}$.
Then, the following estimates hold:
 \begin{align}
         \inf_{v\in {\rm Span}\{u\}} \|u_N - v\|\leq  \; &
C_N^u
  \|R_N\|, \label{eq:res1}
\\
        \inf_{v^*\in {\rm Span}\{u^*\}} \|u_N^* - v^*\|\leq  \; &
C_N^{u^*} \|R_N^*\|, \label{eq:res2}
     \end{align}
     with 
     \begin{align*}
         C_N^u &:= \left\|P \left(P \M P -k_N I\right)^+ P A^{-1} \right\|,\\
         C_N^{u^*}& :=
\left\|A^{-T} P^* \left(P^* M^T P^*-k_NI\right)^+ P^*\right\|.\\     \end{align*}
Here and in the following, with a slight abuse of notation, we denote by $\|\cdot\|$ the operator norm associated with the vector norm $\|\cdot\|$ on $\R^{\mathcal N}$.
\end{proposition}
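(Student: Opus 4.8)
The plan is to bound, for the direct problem, the oblique spectral projection $\|P u_N\|$ of the reduced eigenvector onto $[\mathrm{Span}\{\tilde u^*\}]^\perp$, and to observe that this already controls the quantity on the left of~\eqref{eq:res1}: since $I-P = \dfrac{u(\tilde u^*)^T}{\langle u,\tilde u^*\rangle}$ maps into $\mathrm{Span}\{u\}$ by~\eqref{eq:projector}, choosing $v = (I-P)u_N$ gives $\inf_{v\in\mathrm{Span}\{u\}}\|u_N - v\| \le \|u_N-(I-P)u_N\| = \|P u_N\|$. The adjoint bound~\eqref{eq:res2} will follow in the same spirit after an appropriate change of variable.

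The core step is an identity relating $P u_N$ to the residual $R_N$. Starting from $A^{-1}R_N = (\M - k_N I)u_N$ (immediate from $\M=A^{-1}B$ and~\eqref{eq:residual_R}), I would split $u_N = Pu_N + (I-P)u_N$ and invoke Lemma~\ref{lemma:properties_proj}: the component $(I-P)u_N\in\mathrm{Span}\{u\} = \mathrm{Ker}\,P$ satisfies $\M(I-P)u_N = k(I-P)u_N$ while $P(I-P)=0$, so that $P\M u_N = P\M P u_N$; combining this with $P^2 = P$ yields $(P\M P - k_N I)\,P u_N = P A^{-1}R_N$. Since $Pu_N$ lies in the invariant subspace $\mathrm{Ran}\,P = [\mathrm{Span}\{\tilde u^*\}]^\perp$, on which $P\M P - k_N I$ is invertible thanks to the hypothesis $k_N\notin\sigma\big((P\M P)|_{[\mathrm{Span}\{\tilde u^*\}]^\perp}\big)$, I would apply the Moore--Penrose inverse (which inverts there and annihilates $\mathrm{Span}\{u\}$, hence has range inside $\mathrm{Ran}\,P$) to get $P u_N = P(P\M P - k_N I)^+ P A^{-1}R_N$; taking operator norms gives~\eqref{eq:res1} with the stated $C_N^u$.

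For the adjoint estimate I would not work with $u_N^*$ directly but with $w_N := A^T u_N^*$, which plays the role that $\tilde u^*$ plays for $u$: one checks $(\M^T - k_N I)w_N = B^T A^{-T}A^T u_N^* - k_N A^T u_N^* = (B^T-k_NA^T)u_N^* = R_N^*$. Running the previous argument with $P^*$, $\M^T$, $\tilde u^*$ and Lemma~\ref{lemma:properties_proj_star} (together with $\M^T\tilde u^* = k\tilde u^*$) in place of $P$, $\M$, $u$ and Lemma~\ref{lemma:properties_proj} then gives $P^* w_N = P^*(P^*\M^T P^* - k_N I)^+ P^* R_N^*$. To return to $u_N^*$, I would compute from~\eqref{eq:projector*} that $A^{-T}P^* A^T = I - \dfrac{u^*(Au)^T}{\langle Au,u^*\rangle}$, which is the oblique projector with kernel $\mathrm{Span}\{u^*\}$ and range $[\mathrm{Span}\{Au\}]^\perp$ (well defined because $\langle Au,u^*\rangle=\|A^Tu^*\|\,\langle u,\tilde u^*\rangle\neq 0$); hence $(I-A^{-T}P^*A^T)u_N^*\in\mathrm{Span}\{u^*\}$ and $\inf_{v^*\in\mathrm{Span}\{u^*\}}\|u_N^* - v^*\| \le \|A^{-T}P^* A^T u_N^*\| = \|A^{-T}P^* w_N\|$, which is bounded by $C_N^{u^*}\|R_N^*\|$ after substituting the formula for $P^*w_N$.

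The main obstacle — really the points needing care rather than a genuine difficulty — are: (i) identifying $w_N = A^T u_N^*$ as the vector for which the adjoint residual is $(\M^T - k_N I)w_N$, and recognizing the conjugated operator $A^{-T}P^*A^T$ as the projector annihilating $u^*$; and (ii) dealing with the fact that $P$ and $P^*$ are not orthogonal, so the argument yields only upper bounds for the distances, and justifying that $(P\M P - k_N I)^+$ and $(P^*\M^T P^* - k_N I)^+$ behave as genuine inverses on $Pu_N$ and $P^*w_N$ precisely because those vectors sit in the corresponding invariant subspaces. Everything else is the bookkeeping of the stability and commutation properties established in Lemmas~\ref{lemma:def_spectral_projector}--\ref{lemma:properties_proj_star}.
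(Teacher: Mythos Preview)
Your proposal is correct and follows essentially the same route as the paper: bound the distance by $\|Pu_N\|$ (resp.\ $\|A^{-T}P^*A^Tu_N^*\|$), derive the key identity $Pu_N = P(P\M P-k_NI)^+PA^{-1}R_N$ via the invariance properties of Lemmas~\ref{lemma:properties_proj}--\ref{lemma:properties_proj_star}, and take operator norms. The only cosmetic differences are that the paper first establishes the abstract identity $P(P\M P-k_NI)^+(P\M P-k_NI)P=P$ and then applies it to $u_N$, whereas you compute $(P\M P-k_NI)Pu_N=PA^{-1}R_N$ directly; and for the adjoint the paper reaches $\|A^{-T}P^*A^Tu_N^*\|$ via the change of variable $\tilde v^*\in\mathrm{Span}\{\tilde u^*\}$ rather than by explicitly identifying $A^{-T}P^*A^T$ as the oblique projector with kernel $\mathrm{Span}\{u^*\}$.
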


\begin{remark}
Note that $u_N, u_N^*$ and $k_N$ do not have to be respectively related to $u$, $u^*$ and $k$ for the above estimates to hold. However, in practice, $u_N$ will be an approximation of $u$, $u_N^*$ will be an approximation of $u^*$ and $k_N$ will be an approximation of $k$, so that the norms of the residuals $\|R_N\|$ and $\|R_N^*\|$ will be small.
\end{remark}

\begin{remark}  
The results obtained in Proposition~\ref{prop:prop_proj} match those of~\cite[Proposition 4]{giani2016robust} for $k=0$, noting the slightly different definition of the residual to take into account the generalized eigenvalue problem. 
\end{remark}

\begin{proof}
First, there holds
\begin{equation*}
	\inf_{v\in {\rm Span}\{u\}} \|u_N - v\| \leq \left\|P u_N\right\|.
\end{equation*}
Second, let us show that
$ P \left(P \M P -k_N I\right)^+ \left(P \M P -k_N I\right) P = P$. 
Indeed for $v \in {\rm Span} \{u\}$,\\
$P \left(P \M P -k_N I\right)^+ \left(P \M P -k_N I\right) P v = 0$ and $Pv = 0$. Besides, for $v \in [{\rm Span} \{\tilde u^*\}]^\perp, P v = v$, and
$\left(P \M P -k_N I\right) P v \in  [{\rm Span} \{\tilde u^*\}]^\perp$ from Lemma~\ref{lemma:properties_proj} (ii). As a consequence,
since $k_N\notin \sigma((PMP)_{|{[\rm Span}{\tilde u^*}]^\perp}$, $\left(P \M P -k_N I\right)$ is invertible on $[{\rm Span} \{\tilde u^*\}]^\perp$. 
Hence for $v\in [{\rm Span} \{\tilde u^*\}]^\perp,$ 
\[
P \left(P \M P -k_N I\right)^+ \left(P \M P -k_N I\right) P v = 
P \left(P \M P -k_N I\right)|_{{\rm Span}\{\tilde u^*\}}^{-1} \left(P \M P -k_N I\right)|_{{\rm Span}\{\tilde u^*\}} P v = Pv.
\]
We conclude by noting that $\R^\Nh = {\rm Span} \{u\} + [{\rm Span} \{\tilde u^*\}]^\perp$.

 Then using Lemma~\ref{lemma:properties_proj} (iii), we have
\begin{align*}
    P u_N &= 
    P \left(P \M P -k_N I\right)^+ \left(P \M P -k_N I\right) P u_N \\
    &= P \left(P \M P -k_N I\right)^+ 
    P\left(\M-k_N I\right)u_N.
\end{align*}
Using~\eqref{eq:residual_R}, we obtain
\begin{equation}
\label{eq:estimate_P_hu2}
    P u_N = P \left(P \M P -k_N I\right)^+ P A^{-1}R_N.
\end{equation}
Thus,
\begin{equation}
    \label{eq:estimate_P_hu}
	\left\|P u_N\right\| \leqslant \left\|P \left(P \M P -k_N I\right)^+ P A^{-1} \right\|  \|R_N\|.
\end{equation}
To show the second bound, we first note that
\begin{equation*}
	{P^*\left(A^Tu_N^*\right) = A^Tu_N^* - \dfrac{\langle u,A^Tu_N^* \rangle}{\langle u,\tilde{u}^* \rangle}\tilde{u}^*,}
\end{equation*}
so that using~\eqref{eq:tilde_ustar} and~\eqref{eq:projector*}
\begin{align*}
	\inf_{v^*\in {\rm Span}\{u^*\}} \|u_N^* - v^*\| &= \inf_{v^*\in {\rm Span}\{u^*
	\}} \left\|A^{-T}\left(A^Tu_N^*-A^Tv^*\right)\right\| \\
	&= \inf_{\tilde{v}^* \in {\rm Span}\{ \tilde{u}^*\}} \left\|A^{-T}\left(A^Tu_N^*- \tilde{v}^*\right)\right\| \\
	&\leqslant \left\|A^{-T}P^* A^Tu_N^*\right\|.
\end{align*}
Now, using Lemma~\ref{lemma:properties_proj_star} (iii) and similar arguments as above, there holds
\begin{align*}
     P^*\left(A^Tu_N^*\right) &= P^*
     \left(P^* M^T P^*-k_NI\right)^+
    \left(P^* M^T P^*-k_NI\right)
    P^* A^Tu_N^*
    \\
        & = P^* \left(P^* M^T P^*-k_NI\right)^+ P^*\left(M^T-k_NI\right)A^Tu_N^* \\
    &=  P^* \left(P^* M^T P^*-k_NI\right)^+ P^*\left(B-k_NA\right)^Tu_N^*.
\end{align*}
Hence
\begin{equation}
    \label{eq:estimate_Ph*}
    P^*\left(A^Tu_N^*\right) =  P^* \left(P^* M^T P^*-k_NI\right)^+ P^*R_N^*.
\end{equation}
Then,
\begin{equation*}
	\left\|A^{-T}P^* A^Tu_N^* \right\| \leqslant \left\|A^{-T} P^* \left(P^* M^T P^*-k_NI\right)^+ P^*\right\| \|R_N^*\|,
\end{equation*}
which proves~\eqref{eq:res2}.
\end{proof}

\subsection{Error estimate on the eigenvalue}\label{sec:err_eigval}

We now provide an estimate for the eigenvalue.

\begin{proposition}[Eigenvalue estimation]\label{prop:eigenvalue_aposteriori} Let $u_N, u_N^*\in \R^{\cN}$.
Under Assumption~\ref{assumption:Perron-Frobenius} and the assumptions that
$\displaystyle k_N := \frac{\langle u_N^*, Bu_N \rangle}{\langle u_N^*, A u_N \rangle}$ is such that $k_N\notin \sigma((PMP)|_{[{\rm Span}\{ \tilde{u}^*\}]^\perp})$ and $k_N\notin \sigma((P^*M^TP^*)|_{[{\rm Span}\{ u\}]^\perp}$,
there holds
\begin{equation}\label{eq:estimator}
    \left|k_N-k\right| \leq 
    C^k_N \eta_N^k,
\end{equation}
with 
\begin{equation}
    \label{eq:etaNK}
    \eta_N^k := \frac{\|R_N\| \|R_N^*\|}{\left|\langle u_N^*,A u_N\rangle\right|},
\end{equation}
and 
\begin{equation}
\label{eq:C_N}
        C^k_N := \left\| [P^* \left(P^* M^T P^*-k_NI\right)^+ P^*]^T (M - kI) P \left(P \M P -k_N I\right)^+ P A^{-1} \right\|.
\end{equation}
\end{proposition}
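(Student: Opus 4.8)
The plan is to relate the eigenvalue error $k_N - k$ to the residuals $R_N$ and $R_N^*$ via a quadratic identity, and then to bound the resulting expression using the eigenvector estimates of Proposition~\ref{prop:prop_proj}. The starting point is the standard trick for eigenvalue error estimates: since $k$ is the Rayleigh-type quotient associated with the exact eigenpairs and $k_N = \langle u_N^*, B u_N\rangle / \langle u_N^*, A u_N\rangle$, one expects $k_N - k$ to be, up to the normalizing denominator $\langle u_N^*, A u_N\rangle$, a bilinear expression in $(u_N - \text{(something in } \mathrm{Span}\{u\}))$ and $(u_N^* - \text{(something in } \mathrm{Span}\{u^*\}))$. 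Concretely, I would write $k_N - k$ using the residual $R_N = (B - k_N A)u_N$ together with $Mu = ku$, $M^T\tilde u^* = k\tilde u^*$, and expand so that the error becomes $\langle (\text{adjoint side}), (M - kI)\,(\text{direct side})\rangle / \langle u_N^*, A u_N\rangle$, where the "direct side" is exactly $P u_N$ and the "adjoint side" is essentially $P^*(A^T u_N^*)$ — the very quantities computed in the proof of Proposition~\ref{prop:prop_proj}.

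First I would establish the algebraic identity
\[
k - k_N = \frac{\langle P^*(A^T u_N^*),\, (M - kI)\, P u_N\rangle}{\langle u_N^*, A u_N\rangle},
\]
or a variant of it. To see why such an identity should hold: add and subtract appropriately, use that $(M-kI)u = 0$ so the $\mathrm{Span}\{u\}$-component of $u_N$ drops out on the right factor, and use that $\tilde u^*$ is a left eigenvector of $M - kI$ (equivalently $u$ is annihilated by $(M-kI)^T$ acting suitably) so that the $\mathrm{Span}\{\tilde u^*\}$-component of $A^T u_N^*$ drops out on the left factor. This is where one must be careful with the generalized structure — the denominator $\langle u_N^*, A u_N\rangle$ and the appearance of $A^{-1}$, $A^{-T}$ — but Lemma~\ref{lem:lem00} guarantees the denominator is nonzero and Assumption~\ref{assumption:Perron-Frobenius} gives invertibility of $A$.

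Next I would substitute the two representations already derived inside the proof of Proposition~\ref{prop:prop_proj}, namely $P u_N = P(PMP - k_N I)^+ P A^{-1} R_N$ from~\eqref{eq:estimate_P_hu2} and $P^*(A^T u_N^*) = P^*(P^* M^T P^* - k_N I)^+ P^* R_N^*$ from~\eqref{eq:estimate_Ph*}. Plugging both into the identity above and moving the adjoint-side operator across the inner product as a transpose yields
\[
k - k_N = \frac{1}{\langle u_N^*, A u_N\rangle}\Big\langle R_N^*,\; [P^*(P^*M^TP^*-k_NI)^+P^*]^T (M - kI) P(PMP-k_NI)^+ P A^{-1} R_N\Big\rangle,
\]
and then Cauchy--Schwarz together with the definition of the operator norm gives $|k_N - k| \le C_N^k\, \|R_N\|\,\|R_N^*\| / |\langle u_N^*, A u_N\rangle| = C_N^k \eta_N^k$ with $C_N^k$ exactly as in~\eqref{eq:C_N}.

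The main obstacle is getting the algebraic identity for $k - k_N$ exactly right — in particular verifying that the "direct side" collapses precisely to $P u_N$ (not merely something proportional to it) and the "adjoint side" precisely to $P^*(A^T u_N^*)$, with the correct scalar denominator $\langle u_N^*, A u_N\rangle$. This requires using the spectral decompositions of Lemmas~\ref{lemma:def_spectral_projector}--\ref{lemma:properties_proj_star} carefully, keeping track of which projector acts on which side, and exploiting $Mu = ku$ and $M^T\tilde u^* = k\tilde u^*$ to cancel the rank-one parts of $P$ and $P^*$. Once this identity is secured, the remaining steps are a routine substitution and a single Cauchy--Schwarz estimate.
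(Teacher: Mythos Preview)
Your proposal is correct and follows essentially the same route as the paper: the paper establishes the identity $k_N - k = \dfrac{\langle P^*(A^T u_N^*),\,(M-kI)\,P u_N\rangle}{\langle u_N^*, A u_N\rangle}$ by expanding $\langle A^T(u_N^* - \alpha u^*),\,(M-kI)(u_N - \beta u)\rangle$ for arbitrary $\alpha,\beta$, showing it equals $(k_N-k)\langle u_N^*, A u_N\rangle$ independently of $\alpha,\beta$, and then choosing $\alpha,\beta$ so that the two arguments become $P^*(A^T u_N^*)$ and $P u_N$; it then substitutes \eqref{eq:estimate_P_hu2} and \eqref{eq:estimate_Ph*} exactly as you describe. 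The only cosmetic difference is the sign of the identity (you wrote $k-k_N$), which is immaterial once absolute values are taken.
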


\begin{remark}
Note that in this result, the vectors $u_N$ and $u_N^*$ may not be solutions of a reduced eigenvalue problem of the form (\ref{eq:reducedpb}) or (\ref{eq:reducedpb_adjoint}). The only requirement of Proposition~\ref{prop:eigenvalue_aposteriori} is that $k_N$ has to be related to $u_N$ and $u_N^*$ by the formula stated in the proposition.
\end{remark}

\begin{proof}
For any $\alpha$, $\beta \in \mathbb{R}$,
\begin{align*}
    \langle A^T\left(u_N^*-\alpha u^*\right), \left(\M -k I\right)\left(u_N-\beta u\right)\rangle
    &= \langle A^T u_N^*,\M u_N \rangle 
    -\beta \langle A^Tu_N^*,\M u \rangle 
    -\alpha \langle A^Tu^*, \M u_N \rangle 
    +\alpha\beta \langle A^Tu^*,\M u \rangle \\
    & \quad - k \langle A^T u_N^*,u_N \rangle 
    +\beta k \langle A^Tu_N^*, u \rangle 
    + \alpha k \langle A^T u^*, u_N \rangle 
    -\alpha\beta k \langle A^Tu^*, u \rangle.
\end{align*}
Noting that $Mu = k u$, $M^T A^T u^* = k A^T u^*$ and recalling that $M=A^{-1}B$, we obtain
\begin{align*}
	 \langle A^T\left(u_N^*-\alpha u^*\right), \left(\M -kI\right)\left(u_N-\beta u\right)\rangle
	&= \langle A^T u_N^*, M u_N \rangle - k \langle u_N^*,Au_N \rangle\\
	&= \left(k_N-k\right)\langle u_N^*,Au_N \rangle.
\end{align*}
{According to Lemma~\ref{lem:lem00}, we can set}
\begin{equation*}
    \alpha = \dfrac{1}{\left\|A^Tu^*\right\|} \dfrac{\langle A^Tu_N^*,u\rangle}{\langle \tilde{u}^*,u\rangle},\quad \beta = \dfrac{\langle u_N,\tilde{u}^*\rangle}{\langle u,\tilde{u}^*\rangle},
\end{equation*}
so that we find 
\begin{equation*}
    k_N - k = \dfrac{1}{\langle u_N^*,Au_N\rangle}\langle P^*\left(A^Tu_N^*\right),\left(M-k I\right)P u_N\rangle .
\end{equation*}
Using~\eqref{eq:estimate_P_hu2} and~\eqref{eq:estimate_Ph*} finishes the proof.
\end{proof}

\section{Practical estimates of efficiencies and prefactors}\label{sec:prefactor}

As in the previous section, to simplify notation, the subscript $\mu$ is again omitted in this section.

In view of Proposition~\ref{prop:prop_proj} and Proposition~\ref{prop:eigenvalue_aposteriori}, it is natural to estimate the actual errors $e_N^k=\left|k-k_N\right|$, $e_N^u=\left\|u_{\mu,N}-u_{\mu}\right\|$ and $e_N^{u^*}=\left\|u^*_{\mu,N}-u^*_{\mu}\right\|$ by, respectively,
\begin{equation}\label{eq:Cbar}
\Delta_N^k := \overline{C}_N^k \eta_N^k,\qquad
\Delta_N^u := \overline{C}_N^u \|R_N\|,\qquad
\Delta_N^{u^*} := \overline{C}_N^{u^*} \|R^*_N\|,
\end{equation}
where $\overline{C}_N^k$, $\overline{C}_N^u$ and $\overline{C}_N^{u^*}$ are some constants which are 
good estimates of the efficiencies
	$\dfrac{e_N^k}{\eta_N^k(\mu)}$, 	$\dfrac{e_N^u}{\|R_N(\mu)\|}$, and $\dfrac{e_N^{u^*}}{\|R^*_N(\mu)\|}$. For example, one could use practical (computable) estimations of the constants $C_N^k$, $C_N^u$ and $C_N^{u^*}$ appearing in Proposition~\ref{prop:prop_proj} and Proposition~\ref{prop:eigenvalue_aposteriori}. 

\medskip

For the applications we are interested in, as will be illustrated below in  Section~\ref{sec:test_case_1}, we observe that the operators are perturbations of symmetric operators. This is why we investigate in Section~\ref{sec:theory} the  links between the prefactor $C_N^k$ introduced above and well-known results about this constant in the symmetric case. However, this does not yield practical efficient formulas for the prefactors. This is why  we propose in Section~\ref{sec:practical} a practical heuristic approach to compute some prefactors $\overline{C}_N^k$, $\overline{C}_N^u$ and $\overline{C}_N^{u^*}$ in the reduced basis context, that we use in the numerical results to build practical {\it a posteriori} error estimators in the greedy algorithm to select the reduced space. This heuristic approach gives very interesting numerical results for neutronics applications as will be illustrated in Section~\ref{sec:num}.

\subsection{Some connections with the symmetric case}\label{sec:theory}

The goal of this section is to draw links between the prefactor $C_N^k$ defined in Section~\ref{sec:err_eigval}, and the prefactor which is traditionnally used for the computation of {\it a posteriori} error estimators for symmetric eigenvalue problems. We begin this section by recalling well-known results about {\it a posteriori} error estimators for symmetric eigenvalue problems in Section~\ref{sec:sym}. 
In particular, we recall that the value of this prefactor in the symmetric context is directly linked with the value of the spectral gap of the exact problem. 
We then provide two different approaches to relate the value of the constant $C_N^k$ defined by (\ref{eq:C_N}) to the value of the prefactor in the symmetric case: (i) we prove in Section~\ref{sec:numrange} that in the non-symmetric case, the constant $C_N^k$ can be estimated using the distance between the approximate eigenvalue and the {\em numerical range} of the non-symmetric operator;
(ii) in Section~\ref{sec:perturbation} we study the perturbative regime where the non-symmetric operator can be seen as a small perturbation of a symmetric operator, and check that the prefactor for the non-symmetric operator is also a small perturbation of the well-known expression of the prefactor in the symmetric case.

\subsubsection{Symmetric case}\label{sec:sym}

The aim of this section is to recall some well-known results about {\it a posteriori} error estimators for symmetric eigenvalue problems, i.e. in the case where $A$ is a positive definite symmetric matrix and $B = I$, so that $M = A^{-1}$. In this case, all the eigenvalues of $M$ are real and positive, $k$ being its largest one. We still assume that $k$ is a simple eigenvalue of $M$ and denote by $k_2$ the second largest eigenvalue of $M$ so that $k > k_2$. Note that in the symmetric case, there holds that $u=  u^*$ and $P = P^* = P^T$.

As a consequence, for a given vector $u_N$ and the value $k_N = \dfrac{\langle u_N, Bu_N \rangle}{\langle u_N, A u_N \rangle} >0$, we have (from~\eqref{eq:C_N})
\begin{align*}
     C^k_N &= \left\| [P \left(P M P-k_NI\right)^+ P]^T (M - kI) P \left(P \M P -k_N I\right)^+ P A^{-1} \right\|
     \\
     & =  \left\| P \left(P M P-k_NI\right)^+ P (M - kI) P \left(P \M P -k_N I\right)^+ P A^{-1} \right\|,
\end{align*}
and we have the following proposition. 

\begin{proposition}\label{prop:prefactor_sym_case}
Let $A$ be symmetric positive definite and $B = I$. Let $k$ be the largest eigenvalue of $M=A^{-1}$, let us assume that it is simple, and let us denote by $k_2$ its second largest eigenvalue. Let us also assume that
\begin{equation}
\label{gap_assumption}
	k > k_N > k_{2} > 0.
\end{equation}
Then, there holds
\begin{equation}
    \label{eq:symmetric_case_prefactor}
    C^k_N = \frac{k_{2} (k-k_{2})}{(k_N-k_{2})^2} = \frac{\|PA^{-1}\|\|M- kI\|}{{\rm dist}(k_N, \sigma( (PMP)|_{{\rm Span}\{u\}^\perp}))^2}.
    \end{equation}
\end{proposition}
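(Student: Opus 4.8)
The plan is to diagonalize everything in the eigenbasis of the symmetric matrix $M = A^{-1}$ and reduce the operator norm in \eqref{eq:C_N} to a scalar maximum. First I would record the structural simplifications available in the symmetric case: since $A = A^T$ is positive definite and $B = I$, we have $u = u^* = \tilde u^*$ (after normalization), $P = P^* = P^T$ is the \emph{orthogonal} projector onto $\mathrm{Span}\{u\}^\perp$, and $M = M^T$. Consequently $PMP$ is symmetric, it commutes with $P$ by Lemma~\ref{lemma:properties_proj}(iii), and $(PMP - k_N I)^+$ is a symmetric operator supported on $\mathrm{Span}\{u\}^\perp$. Also $A^{-1} = M$ commutes with $P$, so $PA^{-1} = PM = MP = PMP$ on the relevant subspace. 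Plugging these into \eqref{eq:C_N}, the prefactor becomes
\[
C_N^k = \left\| P(PMP - k_N I)^+ P\,(M - kI)\,P(PMP - k_N I)^+ PM \right\|.
\]

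Next I would introduce an orthonormal eigenbasis $(e_i)_{1 \le i \le \mathcal N}$ of $M$ with $Me_i = \kappa_i e_i$, ordered so that $\kappa_1 = k > \kappa_2 = k_2 \ge \kappa_3 \ge \cdots$, and $e_1 = u$. On $\mathrm{Span}\{u\}^\perp = \mathrm{Span}\{e_2, \dots, e_{\mathcal N}\}$, all the operators above are simultaneously diagonal: $P$ acts as the identity, $PMP$ acts as multiplication by $\kappa_i$, $(PMP - k_N I)^+$ as multiplication by $(\kappa_i - k_N)^{-1}$, $(M - kI)$ as multiplication by $(\kappa_i - k)$, and $PM$ as multiplication by $\kappa_i$ (on the $e_1$-component everything is killed by $P$). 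Hence $C_N^k$ is the largest absolute value over $i \ge 2$ of
\[
\frac{\kappa_i\,(\kappa_i - k)}{(\kappa_i - k_N)^2}.
\]
Since $k > k_N > k_2 = \kappa_2 \ge \kappa_i > 0$ for all $i \ge 2$ by assumption \eqref{gap_assumption}, each factor $\kappa_i - k < 0$ and $\kappa_i - k_N < 0$, so the expression is negative; its absolute value is $\kappa_i(k - \kappa_i)/(k_N - \kappa_i)^2$.

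The remaining step is to show this scalar function is maximized at $i = 2$, i.e. at $\kappa_i = k_2$. I would study $f(x) = x(k - x)/(k_N - x)^2$ on the interval $(0, k_2]$ and check that $f$ is increasing there, so the maximum over the discrete set $\{\kappa_i : i \ge 2\} \subset (0, k_2]$ is attained at the endpoint $x = k_2$, giving $C_N^k = k_2(k - k_2)/(k_N - k_2)^2$. A direct computation of $f'(x)$ gives a numerator proportional to $(k - 2x)(k_N - x) + 2x(k-x) = k k_N - k x + \cdots$; after simplification one finds $f'(x) = \big(k k_N - (k + k_N - 2k_N?) x \cdots\big)$, and one checks the sign is positive for $x \in (0, k_2]$ using $k_N < k$ and $x \le k_2 < k_N$. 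This monotonicity check is the one genuinely computational point, but it is elementary. Finally, for the second equality in \eqref{eq:symmetric_case_prefactor}, I would note that $\|PA^{-1}\| = \|PM\| = \max_{i \ge 2}\kappa_i = k_2$ (since the $\kappa_i$ are positive and decreasing), that $\|M - kI\| = \max_i |\kappa_i - k| = k - k_2$ (the maximum over $i \ge 2$; note $i=1$ gives $0$), and that $\mathrm{dist}(k_N, \sigma((PMP)|_{\mathrm{Span}\{u\}^\perp})) = \mathrm{dist}(k_N, \{\kappa_i : i \ge 2\}) = k_N - k_2$ by \eqref{gap_assumption}. Substituting gives the claimed factored form, and the proof is complete.

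The main obstacle is purely the monotonicity argument for $f(x) = x(k-x)/(k_N - x)^2$ on $(0, k_2]$: one must be careful that the maximum is at the \emph{largest} eigenvalue below $k_N$ rather than somewhere in the interior, and this is exactly where the hypothesis $k_N > k_2$ (the reduced eigenvalue lies strictly in the gap) is used in an essential way. Everything else is bookkeeping in the eigenbasis.
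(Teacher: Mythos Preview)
Your approach is essentially the paper's: diagonalize in the orthonormal eigenbasis of $M$, read off the operator in \eqref{eq:C_N} as diagonal with entries $\dfrac{k_i(k-k_i)}{(k_N-k_i)^2}$ for $i\ge 2$, and take the maximum. You actually give more than the paper, which simply asserts the maximum is at $i=2$; your monotonicity check for $f(x)=x(k-x)/(k_N-x)^2$ on $(0,k_2]$ is the right way to justify this, and once the numerator of $f'$ is correctly simplified to $k k_N + x(k-2k_N)$, positivity on $(0,k_2]$ follows from $k_2<k_N<k$ (if $k\ge 2k_N$ it is immediate; otherwise $k_2(2k_N-k)<k_N(2k_N-k)<k_N k$).

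There is, however, a slip in your verification of the second equality in \eqref{eq:symmetric_case_prefactor} (a slip the paper shares). You write $\|M-kI\|=\max_i|\kappa_i-k|=k-k_2$, but since the $\kappa_i$ are decreasing, $|\kappa_i-k|=k-\kappa_i$ is maximized at the \emph{smallest} eigenvalue, giving $\|M-kI\|=k-k_{\mathcal N}$, not $k-k_2$. Thus the factored form on the right of \eqref{eq:symmetric_case_prefactor} is in general only an upper bound for $C_N^k$, with equality only when all $k_i$ for $i\ge 2$ coincide. This does not affect the first equality, which is the real content and which your argument establishes correctly.
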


\begin{proof}
Denoting by $k=k_1 > k_2 \geq \cdots \geq k_\Nh$ the eigenvalues of $M = A^{-1}$ and by $u_1, u_2, \ldots, u_\Nh$ corresponding eigenvectors, there holds 
\[
    A^{-1} = M = \sum_{i=1}^\Nh k_i u_i u_i^T,
\]
and 
\[
    P = \sum_{i=2}^\Nh u_i u_i^T.
\]
Then, using functional calculus, there holds
\[
     P \left(P M P-k_NI\right)^+ P (M - kI) P \left(P \M P -k_N I\right)^+ P A^{-1} = 
     \sum_{i=2}^\Nh \frac{k_i(k-k_i)}{(k_N-k_i)^2} 
     u_i u_i^T.
\]
Since the operator norm is associated with the Euclidean vector norm, the operator norm corresponds to the largest eigenvalue of the (symmetric) operator, so that
\[
    C_N^k = \max_{2\leq i \leq \Nh} \frac{k_i(k-k_i)}{(k_N-k_i)^2} = \frac{k_2(k-k_2)}{(k_N-k_2)^2}.
\]
Since $\|PA^{-1}\| = k_2$, $\|M- kI\| = k-k_2$ using~\eqref{gap_assumption}, and ${\rm dist}(k_N, \sigma( (PMP)|_{{\rm Span}\{u\}^\perp}))^2 = (k_N-k_2)^2$, we easily obtain the second equality.
\end{proof}

The constant $C^k_N$ is therefore strongly linked to the spectral gap between the first and second eigenvalue of $M$ in this particular symmetric case. However, this notion of spectral gap is not clear in the non-symmetric context and we provide two points of view which enable to draw a comparison between the symmetric and non-symmetric context. 

\subsubsection{Numerical range}\label{sec:numrange}

In this section, we prove that in the general non-symmetric case, the value of the prefactor $C_N^k$ can be estimated using the so-called numerical range of the non-symmetric operator.
Let us first recall the notion of numerical range.

\begin{definition} Let $Q\in \R^{\cN \times \cN}$. 
The numerical range of the matrix $Q$ is  defined by
\begin{equation*}
    {\rm Num}(Q) = \overline{\left\{\langle v,Q v\rangle,\; \|v\|=1\right\}}.
\end{equation*}
\end{definition}

\begin{lemma}
\label{lem:num_range}
Let $Q\in \R^{\Nh\times \Nh}$ let and $z \notin \sigma(Q)$. Then,
\begin{equation*}
        \left\|\left(Q-z I\right)^{-1}\right\| 
        \leqslant 
        \dfrac{1}{\text{{\rm dist}}\left(z,{\rm Num}\left(Q\right)\right)}.
    \end{equation*}
\end{lemma}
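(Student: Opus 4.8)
The plan is to bound the resolvent norm by relating it to the field of values of the inverse operator. First I would fix a unit vector $v \in \R^{\Nh}$ and set $w = (Q - zI)^{-1} v$, with the goal of showing $\|w\| \le \|v\| / {\rm dist}(z, {\rm Num}(Q))$. Since $v = (Q - zI) w$, I would compute
\[
\langle w, v \rangle = \langle w, (Q - zI) w \rangle = \langle w, Q w \rangle - z \|w\|^2.
\]
If $w \neq 0$, dividing by $\|w\|^2$ shows that $\langle w, v\rangle / \|w\|^2 = \langle \hat w, Q \hat w\rangle - z$, where $\hat w = w/\|w\|$ is a unit vector, so $\langle \hat w, Q \hat w\rangle \in {\rm Num}(Q)$. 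Hence $|\langle w, v\rangle| / \|w\|^2 \ge {\rm dist}(z, {\rm Num}(Q))$.

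Combining this with the Cauchy--Schwarz inequality $|\langle w, v\rangle| \le \|w\| \, \|v\|$, I get ${\rm dist}(z, {\rm Num}(Q)) \, \|w\|^2 \le |\langle w, v\rangle| \le \|w\| \, \|v\|$, and dividing by $\|w\|$ yields $\|w\| \le \|v\| / {\rm dist}(z, {\rm Num}(Q))$. The case $w = 0$ is trivial since then $v = 0$, contradicting $\|v\| = 1$ (or one simply observes the bound holds vacuously). Taking the supremum over unit vectors $v$ gives the operator norm bound. One should note that ${\rm dist}(z, {\rm Num}(Q)) > 0$: since $z \notin \sigma(Q)$ and $\sigma(Q) \subseteq {\rm Num}(Q)$... actually that inclusion gives the wrong direction, so instead I would argue that if $z$ were in the closure ${\rm Num}(Q)$, there would be unit vectors $v_n$ with $\langle v_n, Q v_n\rangle \to z$, making $\|(Q - zI) v_n\|$ bounded below impossible — more carefully, $\|(Q-zI)v_n\|^2 \ge |\langle v_n, (Q-zI) v_n\rangle|^2 = |\langle v_n, Q v_n\rangle - z|^2 \to 0$, which would force $z \in \sigma(Q)$ by a standard compactness/invertibility argument in finite dimension; hence $z \notin \sigma(Q)$ indeed implies ${\rm dist}(z, {\rm Num}(Q)) > 0$ and the right-hand side is finite.

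The proof is essentially elementary; there is no serious obstacle. The only point requiring a little care is the justification that the denominator is strictly positive (equivalently, that ${\rm Num}(Q)$ is closed and contains $\sigma(Q)$, so that $z \notin \sigma(Q)$ together with the quantitative estimate is consistent), but in finite dimension this is routine. I would keep the write-up short, emphasizing the identity $\langle w, (Q-zI)w\rangle = \langle w, Qw\rangle - z\|w\|^2$ and the Cauchy--Schwarz step.
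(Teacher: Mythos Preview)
Your core argument is correct and essentially identical to the paper's: both compute $\langle w,(Q-zI)w\rangle$, bound its modulus below by ${\rm dist}(z,{\rm Num}(Q))\,\|w\|^2$ via the definition of the numerical range and above by $\|(Q-zI)w\|\,\|w\|$ via Cauchy--Schwarz; the paper starts from a unit $w$ and sets $u=(Q-zI)w$, while you start from a unit $v$ and set $w=(Q-zI)^{-1}v$, which is the same chain read in the opposite direction.

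Your digression on the positivity of ${\rm dist}(z,{\rm Num}(Q))$ is both unnecessary and incorrect. It is unnecessary because when the distance vanishes the right-hand side is $+\infty$ and the bound holds trivially; the paper does not raise the issue at all. It is incorrect because $z\notin\sigma(Q)$ does \emph{not} imply $z\notin{\rm Num}(Q)$ for the real numerical range used here: for instance, an invertible skew-symmetric $Q$ has ${\rm Num}(Q)=\{0\}$ while $0\notin\sigma(Q)$. Your attempted argument fails at the step where $|\langle v_n,(Q-zI)v_n\rangle|\to 0$ is supposed to force $\|(Q-zI)v_n\|\to 0$; the Cauchy--Schwarz inequality only gives a lower bound tending to zero, not the quantity itself. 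Simply drop this paragraph.
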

\begin{proof}
Let $w\in\R^\Nh$ be a unit vector. There holds
\begin{align*}
        {\rm dist}\left(z,{\rm Num}\left(Q\right)\right) &\leqslant \left|z-\dfrac{\langle w,Q w\rangle}{\|w\|^2}\right|\\
        &\leqslant \dfrac{\left|\langle w, \left(Q-z I\right)w\rangle\right|}{\|w\|^2}\\
        &\leqslant \dfrac{\left\|\left(Q-z I\right)w\right\|}{\|w\|}.
    \end{align*}
    Taking $u=\left(Q-z I\right)w$, then,
    \begin{equation*}
        \left\|\left(Q-z I\right)^{-1}\right\| \leqslant \dfrac{1}{{\rm dist}\left(z,{\rm Num}\left(Q\right)\right)}.
    \end{equation*}
\end{proof}

\begin{proposition}\label{prop:num_range_upper_bound} Under the same assumptions as in Proposition~\ref{prop:eigenvalue_aposteriori}, there holds
\[
    C^k_N \le   \frac{\| M-k I \| \|PA^{-1} \| }{{\rm dist}(k_N, {\rm Num} ((PMP)_{|[{\rm Span} \{\tilde u^*\}]^\perp}) ) \; {\rm dist}(k_N, {\rm Num} (P^* M^T P^*)_{|[{\rm Span} \{ u\}]^\perp}))}
\]
\end{proposition}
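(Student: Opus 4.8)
The plan is to bound the operator norm in \eqref{eq:C_N} by factoring it into three pieces and estimating each separately using submultiplicativity of the operator norm. Write
\[
C_N^k = \left\| \bigl[P^*(P^*M^TP^*-k_NI)^+P^*\bigr]^T \, (M-kI) \, P(PMP-k_NI)^+PA^{-1}\right\|.
\]
First I would use $\|XYZ\|\le \|X\|\,\|Y\|\,\|Z\|$ with $X = \bigl[P^*(P^*M^TP^*-k_NI)^+P^*\bigr]^T$, $Y = M-kI$, and $Z = P(PMP-k_NI)^+PA^{-1}$. The middle factor contributes $\|M-kI\|$ directly. For the right factor, I would further split $\|Z\| \le \|P(PMP-k_NI)^+P\|\,\|PA^{-1}\|$ — note the extra $P$ can be inserted for free since $(PMP-k_NI)^+ = P(PMP-k_NI)^+P$ by the definition of the Moore--Penrose inverse given before Proposition~\ref{prop:prop_proj} (it vanishes on $\mathrm{Span}\{u\}$ and maps into $\mathrm{Ran}\,P$). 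The left factor satisfies $\|X\| = \|X^T\| = \|P^*(P^*M^TP^*-k_NI)^+P^*\|$ since transposition preserves the Euclidean operator norm.

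Next, the core of the argument is to bound $\|P(PMP-k_NI)^+P\|$ and $\|P^*(P^*M^TP^*-k_NI)^+P^*\|$. For the first, I would observe that $P(PMP-k_NI)^+P$ acts as zero on $\mathrm{Span}\{u\}$ and, restricted to $[\mathrm{Span}\{\tilde u^*\}]^\perp = \mathrm{Ran}\,P$, coincides with $\bigl((PMP-k_NI)|_{\mathrm{Ran}\,P}\bigr)^{-1}$. Since $\mathbb{R}^{\mathcal N} = \mathrm{Span}\{u\} \oplus \mathrm{Ran}\,P$, the norm of the whole operator is controlled by the norm of this restricted inverse (I would be slightly careful here: because the decomposition is not orthogonal there is in principle a projection-norm factor, but $\|P(PMP-k_NI)^+P\|$ is exactly the norm of $v\mapsto (PMP-k_NI)|_{\mathrm{Ran}\,P}^{-1}(Pv)$, and since $\|P\|\cdot(\text{inverse norm})$ already appears one can instead argue directly: for $w \in \mathrm{Ran}\,P$, $\|(PMP-k_NI)|_{\mathrm{Ran}\,P}^{-1}w\| \le \mathrm{dist}(k_N, \mathrm{Num}((PMP)|_{\mathrm{Ran}\,P}))^{-1}\|w\|$ by Lemma~\ref{lem:num_range} applied to $Q = (PMP)|_{\mathrm{Ran}\,P}$ as an operator on the Hilbert space $\mathrm{Ran}\,P$). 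Then I would apply Lemma~\ref{lem:num_range} with $Q = (PMP)|_{[\mathrm{Span}\{\tilde u^*\}]^\perp}$ and $z = k_N$ to get $\|P(PMP-k_NI)^+P\| \le 1/\mathrm{dist}(k_N, \mathrm{Num}((PMP)|_{[\mathrm{Span}\{\tilde u^*\}]^\perp}))$, and symmetrically with $Q = (P^*M^TP^*)|_{[\mathrm{Span}\{u\}]^\perp}$ for the adjoint factor.

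Collecting the three estimates gives exactly the claimed bound
\[
C_N^k \le \frac{\|M-kI\|\,\|PA^{-1}\|}{\mathrm{dist}(k_N, \mathrm{Num}((PMP)|_{[\mathrm{Span}\{\tilde u^*\}]^\perp}))\;\mathrm{dist}(k_N, \mathrm{Num}((P^*M^TP^*)|_{[\mathrm{Span}\{u\}]^\perp}))}.
\]
The main obstacle I anticipate is the technical point flagged above: Lemma~\ref{lem:num_range} is stated for an operator on all of $\mathbb{R}^{\mathcal N}$ with the Euclidean norm, but here it must be applied to the restriction of $PMP$ to the (non-coordinate) subspace $\mathrm{Ran}\,P$ with its inherited inner product, and one must make sure that $P(PMP-k_NI)^+P$, viewed as an operator on all of $\mathbb{R}^{\mathcal N}$, has operator norm equal to that of the restricted inverse on $\mathrm{Ran}\,P$ — this is where the fact that $P$ annihilates the complementary subspace $\mathrm{Span}\{u\}$ and is the identity on $\mathrm{Ran}\,P$ is used, so that composing with $P$ on both sides does not inflate the norm. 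Everything else is a routine chain of submultiplicativity and transpose-invariance of the norm.
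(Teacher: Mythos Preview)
Your approach is exactly the paper's: split $C_N^k$ by submultiplicativity into the four factors $\|P^*(P^*M^TP^*-k_NI)^+P^*\|$, $\|M-kI\|$, $\|P(PMP-k_NI)^+P\|$, $\|PA^{-1}\|$, and invoke Lemma~\ref{lem:num_range} on the two pseudo-inverse factors.

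The obstacle you flag at the end is a genuine gap, and your attempted resolution does not work. The projector $P$ is oblique, so $\|P\|>1$ in general, and $P(PMP-k_NI)^+P$ acts as $v\mapsto \bigl((PMP-k_NI)|_{\mathrm{Ran}\,P}\bigr)^{-1}(Pv)$; its operator norm on all of $\mathbb{R}^{\mathcal N}$ therefore satisfies only
\[
\bigl\|P(PMP-k_NI)^+P\bigr\| \;\le\; \|P\|\cdot \bigl\|\bigl((PMP-k_NI)|_{\mathrm{Ran}\,P}\bigr)^{-1}\bigr\|_{\mathrm{Ran}\,P\to\mathrm{Ran}\,P},
\]
and this is sharp: take $\mathcal N=2$, $u=(1,0)$, $\tilde u^*=(1,1)/\sqrt 2$, and $(PMP)|_{\mathrm{Ran}\,P}$ acting as a scalar $\alpha$; a direct computation gives $\|P(PMP-k_NI)^+P\|=\sqrt 2/|\alpha-k_N|$ while $1/\mathrm{dist}(k_N,\mathrm{Num})=1/|\alpha-k_N|$. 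Your sentence ``composing with $P$ on both sides does not inflate the norm'' is precisely what fails when $P$ is not an orthogonal projection. The paper's two-line proof is silent on this same point, so both arguments as written deliver the stated bound only up to extra factors $\|P\|$ and $\|P^*\|$.
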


Note that the bound given by Proposition~\ref{prop:num_range_upper_bound} is exactly equal to the value of the prefactor in the symmetric case since when $M$ is symmetric and non-negative, ${\rm Num} ((PMP)_{|[{\rm Span} \{\tilde u^*\}]^\perp})={\rm Num} (P^* M^T P^*)_{|[{\rm Span} \{ u\}]^\perp}=[k_2, k_\Nh]$ where $k > k_2 \geq \ldots \geq k_\Nh$ are the ordered eigenvalues of $M$.  

\begin{proof}
Starting from~\eqref{eq:C_N}, it holds that
\[
    C^k_N \le \left\| P^* \left(P^* M^T P^*-k_NI\right)^+ P^* \right\|
    \| M - kI \|
    \left\| P \left(P \M P -k_N I\right)^+ P\right\| \|P A^{-1} \| .
\]
Using Lemma~\ref{lem:num_range}, we easily obtain the result. 
\end{proof}

The upper bound on $C^k_N$ stated in Proposition~\ref{prop:num_range_upper_bound} goes to infinity if $k_N$ (which is supposed to be an approximation of $k$) gets close to ${\rm Num} ((PMP)_{|[{\rm Span} \{\tilde u^*\}]^\perp})$ or ${\rm Num} (P^* M^T P^*)_{|[{\rm Span} \{ u\}]^\perp}$, which can be seen as an underlying spectral gap assumption.

\subsubsection{A Perturbative approach}\label{sec:perturbation}

The aim of this section is to propose another connection between the estimation of the prefactor $C_N^k$ in the non-symmetric case with its well-known expression in the symmetric case. In all this section, we assume that
\begin{equation}\label{eq:perturb_hyp}
\left\{
    \begin{aligned}
&    A = A^\varepsilon= S + \varepsilon T\text{ with } S^T=S,  \, T^T=-T, \, \varepsilon>0,\\
& B=I.
\end{aligned}
\right.
\end{equation}
In other words, the matrix $A$ is a perturbation of a symmetric positive definite matrix $S\in \R^{\Nh \times \Nh}$, since $\varepsilon>0$ is intended to be a small parameter. We still assume here that $B = I$ for the sake of simplicity. 

We also assume that the positive definite symmetric matrix $S$ has a simple positive lowest eigenvalue $\lambda_S$, and that $u_S$ is an associated eigenvector. 
We then denote by $\lambda_S < \lambda_{S,2} \leq \ldots \leq \lambda_{S, \Nh}$ all the eigenvalues of~$S$. We also denote by $k_S:= \frac{1}{\lambda_S}$ and by $k_{S,i}:= \frac{1}{\lambda_{S,i}}$ for $2\leq i \leq \Nh$.
By a perturbative argument, for any $\varepsilon>0$ small enough, 
there exists a simple nonzero eigenvalue $\lambda^\varepsilon$ of smallest modulus of $A^\varepsilon$, and we denote by $u^\varepsilon$ 
an associated direct eigenvector, $u^{*,\varepsilon}$ an associated adjoint eigenvector, $\tilde u^{*,\varepsilon}$ defined as in~\eqref{eq:tilde_ustar} and 
$k^\varepsilon:= \frac{1}{\lambda^\varepsilon}$. 

\medskip

For the sake of simplicity of the perturbative analysis, we assume that the approximate value $k_N$ is independent of~$\varepsilon$. This for example makes sense if one uses a reduced-order model constructed from the one-dimensional reduced space $\mathcal V = {\rm Span}\{u_S\}$. In that case, $u_N = u_N^* = u_S$ and thus $k_N = k_S$.

In this section, using obvious notation, we would like to study the convergence of the prefactor 
$$
C_N^{k,\varepsilon}:= \left\| [P^{*,\varepsilon} \left(P^{*,\varepsilon} (M^\varepsilon)^T P^{*,\varepsilon}-k_NI\right)^+ P^{*,\varepsilon}]^T (M^\varepsilon - k^\varepsilon I) P^\varepsilon \left(P^\varepsilon M^\varepsilon P^\varepsilon -k_N I\right)^+ P^\varepsilon (A^\varepsilon)^{-1} \right\|
$$
to the value
\begin{equation}
    \label{eq:CNksym}
    C_N^{k,\rm sym} = \frac{k_{S,2} (k_{S}-k_{S,2})}{(k_N-k_{S,2})^2}
\end{equation}
as $\varepsilon$ goes to $0$.

We first perform a first-order expansion of the eigenvectors and eigenvalues in $\varepsilon$ (cf. Chapter 2 of ~\cite{kato2013perturbation}).

\begin{lemma}
\label{prop:perturbation_expansion}
Let us assume~\eqref{eq:perturb_hyp} and
\begin{equation}
	\|u^\varepsilon\|^2 = \|u_S\|^2 = 1 \quad \mbox{ and } \quad \langle u^\varepsilon, u_S \rangle>0. \label{eq:assumption_norm_perturbation}
\end{equation}
Then, as $\varepsilon$ goes to $0$,
\begin{align*}
	&	\lambda^\varepsilon = \lambda_S + O(\varepsilon^2), \\
	& k^\varepsilon = k_S + O(\varepsilon^2),  \\
	&	u^\varepsilon = u_S - \varepsilon\left(S-\lambda_S I\right)^{-1}_{|{\rm Span}\{u_S\}^{\perp}}T u_S + O(\varepsilon^2), \nonumber \\
	&	u^{*,\varepsilon} = u_S + \varepsilon\left(S -\lambda_S I\right)^{-1}_{|{\rm Span}\{u_S\}^{\perp}}T u_S + O(\varepsilon^2), \label{eq:solution_HF_perturbed} \\
  & \tilde u^{*,\varepsilon} = u_S + \varepsilon\left(S -\lambda_S I\right)^{-1}_{|{\rm Span}\{u_S\}^{\perp}}T u_S
    + O(\varepsilon^2).
\end{align*}
\end{lemma}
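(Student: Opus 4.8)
The plan is to apply standard Rayleigh--Schrödinger perturbation theory to the analytic family $A^\varepsilon = S + \varepsilon T$. Since $S$ is symmetric with a simple lowest eigenvalue $\lambda_S$, Kato's theory (Chapter 2 of~\cite{kato2013perturbation}) guarantees that for $\varepsilon$ small enough, there is a branch $\lambda^\varepsilon$ of eigenvalues of $A^\varepsilon$, analytic in $\varepsilon$, with $\lambda^0 = \lambda_S$, together with analytic branches of right and left eigenvectors. The first task is to justify that this analytic branch is indeed the eigenvalue of smallest modulus of $A^\varepsilon$: by continuity of the spectrum and the spectral gap of $S$ (i.e. $\lambda_S < \lambda_{S,2}$), for $\varepsilon$ small the disk around $\lambda_S$ of radius half the gap contains exactly one eigenvalue of $A^\varepsilon$ and all other eigenvalues stay bounded away from it, hence $\lambda^\varepsilon$ realizes the smallest modulus. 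Then the reality and simplicity statements follow, and $k^\varepsilon = 1/\lambda^\varepsilon$ is well-defined and analytic.

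Next I would write the standard first-order expansion. Write $u^\varepsilon = u_S + \varepsilon u^{(1)} + O(\varepsilon^2)$ and $\lambda^\varepsilon = \lambda_S + \varepsilon \lambda^{(1)} + O(\varepsilon^2)$, plug into $(S + \varepsilon T) u^\varepsilon = \lambda^\varepsilon u^\varepsilon$ and collect the $\varepsilon^1$ terms: $(S - \lambda_S I) u^{(1)} + T u_S = \lambda^{(1)} u_S$. Taking the scalar product with $u_S$ and using $S^T = S$, $\langle u_S, (S-\lambda_S I) u^{(1)}\rangle = 0$, gives $\lambda^{(1)} = \langle u_S, T u_S\rangle$; but $T^T = -T$ forces $\langle u_S, T u_S\rangle = 0$, so $\lambda^{(1)} = 0$, which yields $\lambda^\varepsilon = \lambda_S + O(\varepsilon^2)$ and, by a Taylor expansion of $k = 1/\lambda$, also $k^\varepsilon = k_S + O(\varepsilon^2)$. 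The normalization~\eqref{eq:assumption_norm_perturbation} combined with $\|u^\varepsilon\|^2 = 1$ forces $\langle u_S, u^{(1)}\rangle = 0$ at first order, so $u^{(1)} \in {\rm Span}\{u_S\}^\perp$; solving the above equation on that subspace gives $u^{(1)} = -(S - \lambda_S I)^{-1}_{|{\rm Span}\{u_S\}^\perp} T u_S$, which is the claimed formula for $u^\varepsilon$.

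For the adjoint eigenvector, the same computation applied to $(S + \varepsilon T)^T u^{*,\varepsilon} = (S - \varepsilon T) u^{*,\varepsilon} = \lambda^\varepsilon u^{*,\varepsilon}$ simply replaces $T$ by $-T$, giving $u^{*,(1)} = +(S - \lambda_S I)^{-1}_{|{\rm Span}\{u_S\}^\perp} T u_S$ (with the analogous normalization $\langle u^{*,\varepsilon}, u_S\rangle > 0$, $\|u^{*,\varepsilon}\| = 1$), hence the stated expansion for $u^{*,\varepsilon}$. Finally, $\tilde u^{*,\varepsilon} = A^{T} u^{*,\varepsilon} / \|A^T u^{*,\varepsilon}\| = (S - \varepsilon T) u^{*,\varepsilon} / \|(S-\varepsilon T) u^{*,\varepsilon}\|$; since $(S - \varepsilon T) u^{*,\varepsilon} = \lambda^\varepsilon u^{*,\varepsilon}$ with $\lambda^\varepsilon > 0$, one gets $\tilde u^{*,\varepsilon} = u^{*,\varepsilon}$ exactly (both being unit vectors with positive overlap with $u_S$), so its expansion coincides with that of $u^{*,\varepsilon}$. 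The main obstacle — really the only nonroutine point — is the careful justification that the analytic perturbation branch is the one picked out by the "smallest modulus" selection rule and that all the objects ($\lambda^\varepsilon$ real and simple, $P^\varepsilon$, $\tilde u^{*,\varepsilon}$) are well-defined for small $\varepsilon$; everything else is the textbook first-order expansion, which I would only sketch and attribute to~\cite{kato2013perturbation}.
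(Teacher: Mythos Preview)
Your proposal is correct and follows essentially the same approach as the paper: both invoke Kato's perturbation theory, write the first-order expansion of the eigenvalue equation, use the skew-symmetry of $T$ to kill the first-order eigenvalue correction, and use the normalization to pin down $u^{(1)}\in{\rm Span}\{u_S\}^\perp$. Your treatment of $\tilde u^{*,\varepsilon}$ is actually slicker than the paper's explicit expansion: since $B=I$, the adjoint eigenvalue equation gives $(A^\varepsilon)^T u^{*,\varepsilon}=\lambda^\varepsilon u^{*,\varepsilon}$ directly, so $\tilde u^{*,\varepsilon}=u^{*,\varepsilon}$ exactly, whereas the paper redoes the expansion from scratch.
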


\begin{proof}
Using the results of~\cite[Chapter~2]{kato2013perturbation}, we decompose $\lambda^\varepsilon$, $u^\varepsilon$, and $u^{*,\varepsilon}$ at first order as 
\begin{align*}
	& \lambda^\varepsilon = 
	\lambda_{A,0}+\varepsilon \lambda_{A,1} + O(\varepsilon^2),
	\\
	& u^\varepsilon =  u_{A,0}+\varepsilon u_{A,1} + O(\varepsilon^2),
	\\
	& u^{*,\varepsilon} =  u^*_{A,0}+\varepsilon u^*_{A,1} + O(\varepsilon^2).
\end{align*}
Using this decomposition, the eigenvalue problem writes
\[
 S u_{A,0} + \varepsilon (S u_{A,1} + T u_{A,0})
 = \lambda_{A,0} u_{A,0} + \varepsilon (\lambda_{A,0} u_{A,1} +
 \lambda_{A,1} u_{A,0} ) +  O(\varepsilon^2).
\]
At order 0 in $\varepsilon$, we obtain $u_{A,0} = u_S$ and $\lambda_{A,0} = \lambda_S$. Then, at first order,
\begin{equation}
	S u_{A,1} + T u_{A,0} = \lambda_{A,0} u_{A,1} + \lambda_{A,1} u_{A,0} \label{eq:perturbation_order_1}.
\end{equation}
Using \eqref{eq:assumption_norm_perturbation}, one can write
\begin{equation*}
	\|u^\varepsilon\|^2 = \|u_{A,0}\|^2 + \varepsilon (\langle u_{A,0}, u_{A,1} \rangle + \langle u_{A,1}, u_{A,0} \rangle) + O(\varepsilon^2),
\end{equation*}
which implies that
\begin{equation}
	\langle u_{A,0},u_{A,1}\rangle = 0 .
\end{equation}
Using the latter and projecting \eqref{eq:perturbation_order_1} onto $u_{A,0}$ gives
\begin{equation*}
	\langle S u_{A,1},u_{A,0}\rangle + \langle T u_{A,0},u_{A,0}\rangle = \lambda_{A,1} \langle u_{A,0},u_{A,0}\rangle = \lambda_{A,1}.
\end{equation*}
As $T$ is skew-symmetric, it holds $\langle T u_{A,0},u_{A,0}\rangle = 0$, so that
\begin{equation*}
	\langle S u_{A,1},u_{A,0}\rangle = \langle u_{A,1},S u_{A,0}\rangle = \lambda_{A,0} \langle u_{A,1},u_{A,0}\rangle = 0.
\end{equation*}
Hence $\lambda_{A,1}=0.$
Then~\eqref{eq:perturbation_order_1} transforms into
\begin{equation*}
	\left(S-\lambda_S I\right)u_{A,1} = -T u_S.
\end{equation*}
The latter has a solution since $ T u_S \in {\rm Span}\{u_S\}^{\perp}$ and $\left(\text{Ker}\left(S-\lambda_S I\right)\right)^{\perp} = \text{Ran}\left(S-\lambda_S I\right).$
Hence 
\begin{equation}
    \label{eq:uA1}
     u_{A,1} = -  \left(S-\lambda_S I\right)^{-1}_{|{\rm Span}\{u_S\}^\perp} T u_S.
\end{equation}
We can apply the same procedure for the adjoint eigenvector to obtain the result. Finally, 
\begin{align*}
    \tilde u^{*,\varepsilon} &:= 
    \frac{(A^\varepsilon)^T u^{*,\varepsilon}}{\|(A^\varepsilon)^T u^{*,\varepsilon}\|}
    \\
    & = 
    \frac{(S-\varepsilon T) (u_S-\varepsilon u_{A,1})}{\| (S-\varepsilon T) (u_S-\varepsilon u_{A,1})\|} 
    + O(\varepsilon^2)
    \\
    & = 
    \frac{\lambda_S u_S - \varepsilon (Su_{A,1} + Tu_S)}{\| \lambda_S u_S - \varepsilon (Su_{A,1} + Tu_S)\|}+ O(\varepsilon^2)
    \\
    & = 
    \left(u_S - \frac{\varepsilon}{\lambda_S} (Su_{A,1} + Tu_S)\right)
    \left(1 + \frac{\varepsilon}{\lambda_S} \langle u_S,(Su_{A,1} + Tu_S)\rangle \right)
    + O(\varepsilon^2)
    \\
    & = 
    u_S - \varepsilon  u_{A,1}
    + O(\varepsilon^2),
\end{align*}
which concludes the proof.
\end{proof}

We now provide first-order expansions of operators which will be needed in the subsequent estimation of the prefactor.

\begin{lemma}
Let us assume~\eqref{eq:perturb_hyp} and~\eqref{eq:assumption_norm_perturbation}.
Then, as $\varepsilon$ goes to $0$,
$$
        P^\varepsilon = P_S + \varepsilon P_T + O(\varepsilon^2) \text{ and }
        P^{*,\varepsilon} = P_S - \varepsilon P_T + O(\varepsilon^2),
$$
    where
$$
       P_S = I - u_S u_S^T, \text{ and }
       P_T = u_{A,1} u_S^T - u_S u_{A,1}^T,
$$
    $u_{A,1} $ being defined in~\eqref{eq:uA1}.
\end{lemma}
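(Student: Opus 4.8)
The plan is to compute the first-order expansions of $P^\varepsilon$ and $P^{*,\varepsilon}$ directly from their definitions \eqref{eq:projector} and \eqref{eq:projector*}, substituting the expansions of $u^\varepsilon$ and $\tilde u^{*,\varepsilon}$ provided by Lemma~\ref{prop:perturbation_expansion}. Recall that
\[
P^\varepsilon = I - \frac{u^\varepsilon (\tilde u^{*,\varepsilon})^T}{\langle u^\varepsilon, \tilde u^{*,\varepsilon}\rangle},
\qquad
P^{*,\varepsilon} = I - \frac{\tilde u^{*,\varepsilon} (u^\varepsilon)^T}{\langle u^\varepsilon, \tilde u^{*,\varepsilon}\rangle}.
\]
From Lemma~\ref{prop:perturbation_expansion} we have $u^\varepsilon = u_S + \varepsilon u_{A,1} \cdot(-1) + O(\varepsilon^2)$; more precisely $u^\varepsilon = u_S - \varepsilon (S-\lambda_S I)^{-1}_{|{\rm Span}\{u_S\}^\perp} T u_S + O(\varepsilon^2) = u_S + \varepsilon u_{A,1} + O(\varepsilon^2)$ since $u_{A,1}$ is exactly $-(S-\lambda_S I)^{-1}_{|{\rm Span}\{u_S\}^\perp}T u_S$ by \eqref{eq:uA1}. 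Similarly $\tilde u^{*,\varepsilon} = u_S - \varepsilon u_{A,1} + O(\varepsilon^2)$.

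First I would expand the denominator: $\langle u^\varepsilon, \tilde u^{*,\varepsilon}\rangle = \langle u_S + \varepsilon u_{A,1}, u_S - \varepsilon u_{A,1}\rangle + O(\varepsilon^2) = \|u_S\|^2 + \varepsilon(\langle u_{A,1},u_S\rangle - \langle u_S, u_{A,1}\rangle) + O(\varepsilon^2) = 1 + O(\varepsilon^2)$, using $\|u_S\|=1$ and the fact that the two first-order cross terms cancel (indeed $\langle u_{A,1}, u_S\rangle = 0$ as shown in the proof of Lemma~\ref{prop:perturbation_expansion}). Hence the denominator contributes nothing at first order and its reciprocal is also $1 + O(\varepsilon^2)$. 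Next I would expand the numerator of $P^\varepsilon$: $u^\varepsilon(\tilde u^{*,\varepsilon})^T = (u_S + \varepsilon u_{A,1})(u_S - \varepsilon u_{A,1})^T + O(\varepsilon^2) = u_S u_S^T + \varepsilon(u_{A,1}u_S^T - u_S u_{A,1}^T) + O(\varepsilon^2)$. Combining, $P^\varepsilon = I - u_S u_S^T - \varepsilon(u_{A,1}u_S^T - u_S u_{A,1}^T) + O(\varepsilon^2) = P_S + \varepsilon P_T + O(\varepsilon^2)$, where $P_S = I - u_S u_S^T$ and $P_T = -(u_{A,1}u_S^T - u_S u_{A,1}^T) = u_S u_{A,1}^T - u_{A,1}u_S^T$. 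Wait — comparing with the statement $P_T = u_{A,1}u_S^T - u_S u_{A,1}^T$; I would carefully recheck the sign: since $P^\varepsilon = I - u^\varepsilon(\tilde u^{*,\varepsilon})^T/\langle\cdot,\cdot\rangle$, the first-order term in $P^\varepsilon$ is $-\varepsilon(u_{A,1}u_S^T - u_S u_{A,1}^T)$, so matching the claimed $P^\varepsilon = P_S + \varepsilon P_T$ forces $P_T = -(u_{A,1}u_S^T - u_S u_{A,1}^T) = u_S u_{A,1}^T - u_{A,1}u_S^T$, which is the negative of what is stated. I suspect the convention in the paper's source takes $u^\varepsilon = u_S - \varepsilon u_{A,1} + O(\varepsilon^2)$ as literally written (with $u_{A,1}$ as in \eqref{eq:uA1}, i.e.\ $u^\varepsilon = u_S - \varepsilon(S-\lambda_S I)^{-1}_{\ldots}Tu_S$, which would mean $u^\varepsilon = u_S + \varepsilon u_{A,1}$), so I would simply track signs with that fixed convention and land on the stated $P_T$; the key point is that the two occurrences $P^\varepsilon$ and $P^{*,\varepsilon}$ differ precisely by swapping the roles of $u^\varepsilon$ and $\tilde u^{*,\varepsilon}$, hence by transposing the rank-one correction, which changes the sign of the antisymmetric part $P_T$. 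This gives $P^{*,\varepsilon} = P_S - \varepsilon P_T + O(\varepsilon^2)$ automatically.

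The main (minor) obstacle is bookkeeping of signs and of the exact meaning of $u_{A,1}$: one must be consistent about whether $u^\varepsilon = u_S + \varepsilon u_{A,1} + O(\varepsilon^2)$ (which follows from \eqref{eq:uA1} combined with the displayed expansion in Lemma~\ref{prop:perturbation_expansion}) so that the $O(\varepsilon)$ term of $u_S u_S^T$ in the outer product is $u_{A,1}u_S^T + u_S u_{A,1}^T$ while the $O(\varepsilon)$ term of $u^\varepsilon(\tilde u^{*,\varepsilon})^T$ is $u_{A,1}u_S^T - u_S u_{A,1}^T$ (the minus coming from $\tilde u^{*,\varepsilon}$ having the opposite sign perturbation). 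There is no analytic difficulty: everything is a Taylor expansion of a product of two real-analytic vector-valued functions of $\varepsilon$ divided by a scalar that is $1 + O(\varepsilon^2)$, and the $O(\varepsilon^2)$ remainders are uniform since we work in finite dimension. I would write the proof as: (1) recall the expansions of $u^\varepsilon$, $\tilde u^{*,\varepsilon}$ from Lemma~\ref{prop:perturbation_expansion} and the orthogonality $\langle u_{A,1},u_S\rangle = 0$; (2) expand $\langle u^\varepsilon,\tilde u^{*,\varepsilon}\rangle = 1 + O(\varepsilon^2)$; (3) expand the rank-one numerators $u^\varepsilon(\tilde u^{*,\varepsilon})^T$ and $\tilde u^{*,\varepsilon}(u^\varepsilon)^T$ to first order; (4) substitute into \eqref{eq:projector}–\eqref{eq:projector*} and identify $P_S$ and $P_T$, noting $P_S = I - u_S u_S^T$ is indeed the orthogonal spectral projector of $S$ onto ${\rm Span}\{u_S\}^\perp$ and that $P_T$ is skew-adjoint (so the two projectors get opposite first-order corrections, consistent with $P^\varepsilon + P^{*,\varepsilon} = 2P_S + O(\varepsilon^2)$ and $(P^{*,\varepsilon})^T = P^\varepsilon + O(\varepsilon^2)$ in the symmetric limit).
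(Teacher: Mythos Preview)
Your approach is exactly the paper's: substitute the expansions of $u^\varepsilon$ and $\tilde u^{*,\varepsilon}$ from Lemma~\ref{prop:perturbation_expansion} into \eqref{eq:projector}--\eqref{eq:projector*} and read off the zeroth- and first-order terms (you are in fact slightly more careful, making explicit that the denominator $\langle u^\varepsilon,\tilde u^{*,\varepsilon}\rangle = 1 + O(\varepsilon^2)$). Your sign worry is well founded: the paper's own displayed step $I - (u_S + \varepsilon u_{A,1})(u_S - \varepsilon u_{A,1})^T = I - u_Su_S^T + \varepsilon(u_{A,1}u_S^T - u_Su_{A,1}^T)$ drops a minus, so the stated $P_T$ is indeed off by a global sign; since only the skew-symmetry of $P_T$ (and of $\Gamma_T$) is used downstream in Proposition~\ref{prop:prefactor_upper_bound}, this slip is inconsequential.
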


\begin{proof}
    We have
\begin{align*}
      P^\varepsilon & = I - \dfrac{u^\varepsilon (\tilde{u}^{*,\varepsilon})^T}{\langle u^\varepsilon, \tilde{u}^{*,\varepsilon}\rangle},\\
      & = I - (u_S + \varepsilon u_{A,1}+ O(\varepsilon^2)) (u_S - \varepsilon u_{A,1}+ O(\varepsilon^2))^T \\
      & = I - u_S u_S^T +\varepsilon (u_{A,1} u_S^T - u_S u_{A,1}^T) + O(\varepsilon^2).
\end{align*}
Similarly, 
\begin{align*}
   P^{*,\varepsilon} &= I - \dfrac{\tilde{u}^{*,\varepsilon}  (u^\varepsilon)^T}{\langle u^\varepsilon, \tilde{u}^{*,\varepsilon}\rangle} \\
   & = I - (u_S - \varepsilon u_{A,1}+ O(\varepsilon^2)) (u_S + \varepsilon u_{A,1}+ O(\varepsilon^2))^T \\
      & = I - u_S u_S^T +\varepsilon (- u_{A,1} u_S^T + u_S u_{A,1}^T)+ O(\varepsilon^2).
\end{align*}
This concludes the proof.
\end{proof}

We now provide a first order expansion of the operator entering the prefactor in~\eqref{eq:C_N}, namely
\begin{equation}
    \mathcal{M}^\varepsilon =  [P^\varepsilon \left(P^\varepsilon M^\varepsilon P^\varepsilon-k_NI\right)^+ P^\varepsilon] (M^\varepsilon - k^\varepsilon I) P^\varepsilon \left(P^\varepsilon M^\varepsilon P^\varepsilon -k_N I\right)^+ P^\varepsilon (A^\varepsilon)^{-1}.
\end{equation}

\begin{lemma}
Let us assume~\eqref{eq:perturb_hyp} and~\eqref{eq:assumption_norm_perturbation}. 
Then, as $\varepsilon$ goes to $0$,
    \begin{equation}
    \label{eq:3.30}
        \mathcal{M}^\varepsilon = \mathcal{M}_0 + \varepsilon \mathcal{M}_1  + O(\varepsilon^2),
    \end{equation}
    with 
    \begin{align*}
   \mathcal{M}_0 &=\Gamma_S(S^{-1} - k_{S}I)\Gamma_S S^{-1},\\
   \mathcal{M}_1 &=-\Gamma_SS^{-1}TS^{-1}\Gamma_SS^{-1} + \Gamma_S(S^{-1} - k_{S}I)\Gamma_TS^{-1}
   -\Gamma_S(S^{-1} - k_{S}I)\Gamma_S S^{-1}TS^{-1} + \Gamma_T(S^{-1} - k_{S}I)\Gamma_S S^{-1},
\end{align*}
and 
\begin{align*}
    \Gamma_S & = P_S\left(P_SS^{-1}P_S -k_N I\right)^+P_S  ,\\
       \Gamma_T  &= P_S\left(P_SS^{-1}P_S -k_N I\right)^+P_T + P_T\left(P_SS^{-1}P_S -k_N I\right)^+P_S \\
   & \quad   - P_S\left(P_SS^{-1}P_S -k_N I\right)^+\left(P_SS^{-1}TS^{-1}P_S + P_TS^{-1}P_S + P_SS^{-1}P_T\right)\left(P_SS^{-1}P_S -k_N I\right)^+P_S  .
\end{align*}
\end{lemma}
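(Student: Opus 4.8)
The strategy is a direct first-order Taylor expansion in $\varepsilon$ of each factor appearing in $\mathcal{M}^\varepsilon$, followed by collecting terms. The ingredients are already in place: from the previous lemmas we have
$M^\varepsilon = (A^\varepsilon)^{-1} = (S+\varepsilon T)^{-1} = S^{-1} - \varepsilon S^{-1}TS^{-1} + O(\varepsilon^2)$,
together with $k^\varepsilon = k_S + O(\varepsilon^2)$, $P^\varepsilon = P_S + \varepsilon P_T + O(\varepsilon^2)$ and $P^{*,\varepsilon} = P_S - \varepsilon P_T + O(\varepsilon^2)$. Since in $\mathcal{M}^\varepsilon$ both occurrences of the Moore--Penrose inverse are of the form $P^\varepsilon(P^\varepsilon M^\varepsilon P^\varepsilon - k_N I)^+ P^\varepsilon$, the natural first step is to establish an expansion of this quantity alone, calling its value at $\varepsilon = 0$ precisely $\Gamma_S$ and its first-order coefficient $\Gamma_T$; then $\mathcal{M}^\varepsilon = [\text{that}]\,(M^\varepsilon - k^\varepsilon I)\,[\text{that}]\,(A^\varepsilon)^{-1}$ and one just multiplies the three expansions, using $k^\varepsilon = k_S + O(\varepsilon^2)$ to discard the correction to the scalar shift at first order.

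\textbf{Key steps, in order.} First I would write $P^\varepsilon M^\varepsilon P^\varepsilon - k_N I = (P_S S^{-1} P_S - k_N I) + \varepsilon\, \delta + O(\varepsilon^2)$, where $\delta = P_T S^{-1} P_S + P_S S^{-1} P_S' + P_S(-S^{-1}TS^{-1})P_S$ with $P_S S^{-1} P_S'$ shorthand for $P_S S^{-1} P_T$; that is, $\delta = P_T S^{-1} P_S + P_S S^{-1} P_T - P_S S^{-1} T S^{-1} P_S$. Next I would use the standard perturbation formula for a (pseudo)inverse: if $X^\varepsilon = X_0 + \varepsilon X_1 + O(\varepsilon^2)$ with $X_0$ invertible on the relevant invariant subspace, then $(X^\varepsilon)^+ = X_0^+ - \varepsilon X_0^+ X_1 X_0^+ + O(\varepsilon^2)$, provided the ranges/kernels vary consistently — this needs the observation that $P^\varepsilon$ is a projector whose range is exactly the invariant subspace on which $P^\varepsilon M^\varepsilon P^\varepsilon - k_N I$ is inverted, so the decomposition of Lemma~\ref{lemma:properties_proj}(ii) holds at each $\varepsilon$ and the pseudoinverse is jointly differentiable. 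Then I would sandwich: $P^\varepsilon (X^\varepsilon)^+ P^\varepsilon = (P_S + \varepsilon P_T)(X_0^+ - \varepsilon X_0^+ \delta X_0^+)(P_S + \varepsilon P_T) + O(\varepsilon^2)$, which upon expansion and using $P_S X_0^+ P_S = X_0^+$ (since $X_0 = P_S S^{-1} P_S - k_N I$ already acts within $\mathrm{Ran}\,P_S$) gives exactly $\Gamma_S + \varepsilon \Gamma_T + O(\varepsilon^2)$ with the stated $\Gamma_T$. Finally, I would substitute into $\mathcal{M}^\varepsilon = (\Gamma_S + \varepsilon\Gamma_T)(S^{-1} - k_S I - \varepsilon S^{-1}TS^{-1})(\Gamma_S + \varepsilon\Gamma_T)(S^{-1} - \varepsilon S^{-1}TS^{-1}) + O(\varepsilon^2)$ and read off the zeroth-order term $\mathcal{M}_0 = \Gamma_S(S^{-1} - k_S I)\Gamma_S S^{-1}$ and the four first-order contributions listed in the statement, one from differentiating each of the four factors.

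\textbf{Main obstacle.} The delicate point is the justification of the pseudoinverse expansion: one must check that the invariant subspace $\mathrm{Ran}\,P^\varepsilon = [\mathrm{Span}\{\tilde u^{*,\varepsilon}\}]^\perp$ on which the operator is inverted varies smoothly and that $k_N$ stays off the spectrum of $(P^\varepsilon M^\varepsilon P^\varepsilon)|_{\mathrm{Ran}\,P^\varepsilon}$ for $\varepsilon$ small — the latter follows by continuity of eigenvalues from the symmetric case where $k_N = k_S \neq k_{S,i}$ for $i \geq 2$ under the gap hypothesis, and from $\|u^\varepsilon - u_S\| = O(\varepsilon)$ the former is standard analytic perturbation theory. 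Once this differentiability is granted, everything else is the bookkeeping of multiplying out four factors and keeping only terms through order $\varepsilon$; care is needed only to track that $P_S$ commutes through $\Gamma_S$ and $X_0^+$ appropriately (so that no spurious $P_T X_0^+ P_T$-type terms of first order survive — they are in fact second order) and to confirm that the $O(\varepsilon^2)$ correction to $k^\varepsilon$ indeed contributes nothing at first order.
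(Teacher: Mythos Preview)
Your proposal is correct and follows essentially the same approach as the paper: expand $M^\varepsilon$, $P^\varepsilon$, and then $P^\varepsilon M^\varepsilon P^\varepsilon$ to first order, apply the standard perturbation formula for the pseudoinverse to obtain $\Gamma^\varepsilon = \Gamma_S + \varepsilon\Gamma_T + O(\varepsilon^2)$, and finally multiply out $\Gamma^\varepsilon(M^\varepsilon - k^\varepsilon I)\Gamma^\varepsilon(A^\varepsilon)^{-1}$ using $k^\varepsilon = k_S + O(\varepsilon^2)$. Your discussion of the differentiability of the Moore--Penrose inverse (smooth variation of $\mathrm{Ran}\,P^\varepsilon$ and $k_N$ staying off the relevant spectrum) is in fact more careful than the paper, which simply invokes ``a first-order expansion of the pseudo-inverse in $\varepsilon$'' without further justification; note also that your sign on the $P_S S^{-1}TS^{-1}P_S$ term in $\delta$ is the correct one and the opposite sign appearing in the displayed $\Gamma_T$ is a typo carried over from the paper's computation.
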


\begin{proof}
First, 
\[
    M^\varepsilon = (A^\varepsilon)^{-1}=  S^{-1} - \varepsilon S^{-1} T S^{-1}+ O(\varepsilon^2).
\]
Therefore, as we have $P^\varepsilon = P_S + \varepsilon P_T +O(\varepsilon^2)$, there holds
\begin{align*}
    P^\varepsilon M^\varepsilon P^\varepsilon &= \left(P_S + \varepsilon P_T +O(\varepsilon^2)\right)\left(S^{-1} - \varepsilon S^{-1} T S^{-1}+ O(\varepsilon^2)\right)\left(P_S + \varepsilon P_T + O(\varepsilon^2)\right) \\
    & = \left(P_SS^{-1} - \varepsilon P_SS^{-1}TS^{-1} + \varepsilon P_TS^{-1}+ O(\varepsilon^2)\right)\left(P_S + \varepsilon P_T + O(\varepsilon^2) \right) \\
    & = P_SS^{-1}P_S - \varepsilon P_SS^{-1}TS^{-1}P_S + \varepsilon P_TS^{-1}P_S + \varepsilon P_SS^{-1}P_T + O(\varepsilon^2)\\
    & = P_SS^{-1}P_S + \varepsilon \left(P_SS^{-1}TS^{-1}P_S + P_TS^{-1}P_S + P_SS^{-1}P_T\right)+ O(\varepsilon^2) .
\end{align*}
Using a first-order expansion of the pseudo-inverse in $\varepsilon$, there holds
\begin{align*}
  (P^\varepsilon &M^\varepsilon P^\varepsilon -k_N I)^+ = \left[\left(P_SS^{-1}P_S -k_N I\right) + \varepsilon \left(P_SS^{-1}TS^{-1}P_S + P_TS^{-1}P_S + P_SS^{-1}P_T\right)+ O(\varepsilon^2)\right]^+ \\
    =& \left(P_SS^{-1}P_S -k_N I\right)^+  \\
    &- \varepsilon\left(P_SS^{-1}P_S -k_N I\right)^+\Big(P_SS^{-1}TS^{-1}P_S + P_TS^{-1}P_S     + P_SS^{-1}P_T\Big)\left(P_SS^{-1}P_S -k_NI\right)^+ + O(\varepsilon^2).
\end{align*}
Hence, one can write
\begin{align*}
   & P^\varepsilon\left(P^\varepsilon \M^\varepsilon P^\varepsilon -k_N I\right)^+P^\varepsilon 
    = P_S\left(P_SS^{-1}P_S -k_N I\right)^+P_S\\ 
&     + \varepsilon \Bigg[P_S\left(P_SS^{-1}P_S -k_N I\right)^+P_T  
     - P_S\left(P_SS^{-1}P_S -k_N I\right)^+\Big(P_SS^{-1}TS^{-1}P_S 
    + P_TS^{-1}P_S + P_SS^{-1}P_T\Big)\\
     &\qquad \quad \times \left(P_SS^{-1}P_S -k_N I\right)^+P_S  + P_T\left(P_SS^{-1}P_S -k_N I\right)^+P_S\Bigg]+ O(\varepsilon^2).
\end{align*}
Defining
\begin{align*}
    \Gamma^\varepsilon := P^\varepsilon\left(P^\varepsilon \M^\varepsilon P^\varepsilon -k_N I\right)^+P^\varepsilon, 
\end{align*}
we have just obtained that
\begin{align*}
    \Gamma^\varepsilon  = \Gamma_S + \varepsilon\Gamma_T+ O(\varepsilon^2).
\end{align*}
Using that
\begin{align*}
   \mathcal M^\varepsilon =  \Gamma^\varepsilon (M^\varepsilon - k^\varepsilon I)\Gamma^\varepsilon (A^\varepsilon)^{-1} &= \left(\Gamma_S + \varepsilon\Gamma_T+ O(\varepsilon^2)\right)\left(S^{-1} - k_{S}I - \varepsilon S^{-1} T S^{-1}+ O(\varepsilon^2)\right)\\
    &\quad \times \left(\Gamma_S + \varepsilon\Gamma_T+ O(\varepsilon^2)\right)\left(S^{-1} - \varepsilon S^{-1} T S^{-1}+ O(\varepsilon^2)\right),
\end{align*}
we easily obtain~\eqref{eq:3.30}.
\end{proof}

We then estimate the prefactor $C_N^k$ in the perturbative case using the previous results.

\begin{proposition}\label{prop:prefactor_upper_bound}
Let us assume~\eqref{eq:perturb_hyp} and~\eqref{eq:assumption_norm_perturbation}. Let us also assume that
\begin{equation}
\label{gap_assumption2}
	k_{S} \geq k_N > k_{S,2} > 0,
\end{equation}
and that $k_{S,2}$ is not degenerate. 
Then, for $\varepsilon$ sufficiently small, there holds
\begin{align*}
    &C_N^{k, \varepsilon} = C_N^{ k,\rm sym} + O(\varepsilon^2),
C_N^{k,\rm skew} + O(\varepsilon^2).
\end{align*}
where $C_N^{ k,\rm sym}$ is defined by~\eqref{eq:CNksym}.
\end{proposition}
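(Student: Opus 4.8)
The plan is to combine the first-order operator expansion $\mathcal{M}^\varepsilon = \mathcal{M}_0 + \varepsilon \mathcal{M}_1 + O(\varepsilon^2)$ obtained in the previous lemma with the observation that the leading term $\mathcal{M}_0 = \Gamma_S(S^{-1} - k_S I)\Gamma_S S^{-1}$ is exactly the operator whose spectral norm is $C_N^{k,\rm sym}$. Indeed, since $S$ is symmetric positive definite, $P_S$ is an orthogonal projector, and $\Gamma_S = P_S(P_S S^{-1} P_S - k_N I)^+ P_S$ is self-adjoint; using the spectral decomposition $S^{-1} = \sum_i k_{S,i} u_{S,i} u_{S,i}^T$ (with $k_{S,1} = k_S$, $u_{S,1} = u_S$) exactly as in the proof of Proposition~\ref{prop:prefactor_sym_case}, one gets $\mathcal{M}_0 = \sum_{i\geq 2} \frac{k_{S,i}(k_S - k_{S,i})}{(k_N - k_{S,i})^2} u_{S,i} u_{S,i}^T$, whose operator norm under the gap assumption~\eqref{gap_assumption2} equals $\frac{k_{S,2}(k_S - k_{S,2})}{(k_N - k_{S,2})^2} = C_N^{k,\rm sym}$, attained simply at the eigenvector $u_{S,2}$.

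Next I would argue that the spectral norm of $\mathcal{M}^\varepsilon = (\mathcal{M}^\varepsilon (\mathcal{M}^\varepsilon)^T)^{1/2}$-type quantity is, to first order in $\varepsilon$, insensitive to the perturbation $\varepsilon \mathcal{M}_1$, \emph{because} the norm is attained at a simple singular value. Concretely: write $C_N^{k,\varepsilon} = \| \mathcal{M}^\varepsilon \|$ and note $\| \mathcal{M}^\varepsilon \|^2 = \lambda_{\max}\big( \mathcal{M}^\varepsilon (\mathcal{M}^\varepsilon)^T \big)$. We have $\mathcal{M}^\varepsilon (\mathcal{M}^\varepsilon)^T = \mathcal{M}_0 \mathcal{M}_0^T + \varepsilon (\mathcal{M}_0 \mathcal{M}_1^T + \mathcal{M}_1 \mathcal{M}_0^T) + O(\varepsilon^2)$, a symmetric matrix. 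The leading term $\mathcal{M}_0 \mathcal{M}_0^T = \sum_{i\geq 2} \left(\frac{k_{S,i}(k_S - k_{S,i})}{(k_N - k_{S,i})^2}\right)^2 u_{S,i} u_{S,i}^T$ has, thanks to assumption~\eqref{gap_assumption2} and the non-degeneracy of $k_{S,2}$, a simple largest eigenvalue $(C_N^{k,\rm sym})^2$ with eigenvector $u_{S,2}$. By standard analytic perturbation theory for a simple eigenvalue of a symmetric matrix (Kato, Chapter~2, as already invoked in the excerpt), the largest eigenvalue of $\mathcal{M}^\varepsilon (\mathcal{M}^\varepsilon)^T$ is analytic in $\varepsilon$ near $0$ with first-order correction $\varepsilon \, u_{S,2}^T (\mathcal{M}_0 \mathcal{M}_1^T + \mathcal{M}_1 \mathcal{M}_0^T) u_{S,2}$. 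So the crux reduces to showing this first-order coefficient vanishes, after which $\| \mathcal{M}^\varepsilon \|^2 = (C_N^{k,\rm sym})^2 + O(\varepsilon^2)$ and hence $C_N^{k,\varepsilon} = C_N^{k,\rm sym} + O(\varepsilon^2)$ by taking square roots (the square root being smooth away from $0$).

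The main obstacle — and the one routine-looking step that actually carries content — is verifying that $u_{S,2}^T (\mathcal{M}_0 \mathcal{M}_1^T + \mathcal{M}_1 \mathcal{M}_0^T) u_{S,2} = 0$, equivalently $\langle \mathcal{M}_0^T u_{S,2}, \mathcal{M}_1^T u_{S,2}\rangle + \langle \mathcal{M}_1 u_{S,2}, \mathcal{M}_0 u_{S,2}\rangle = 0$. I would compute $\mathcal{M}_0 u_{S,2}$ and $\mathcal{M}_0^T u_{S,2}$ explicitly from the spectral form above (both are scalar multiples of $u_{S,2}$, namely $C_N^{k,\rm sym}\, u_{S,2}$), and then show that the relevant components of $\mathcal{M}_1 u_{S,2}$ and $\mathcal{M}_1^T u_{S,2}$ along $u_{S,2}$ vanish. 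This should follow from the skew-symmetry of $T$ propagating through the structure of $\mathcal{M}_1$: each of the four terms of $\mathcal{M}_1$ contains exactly one factor of $T$ sandwiched between symmetric operators built from $S^{-1}$ and $P_S$, so $\langle v, \mathcal{M}_1 v\rangle$-type diagonal quantities against eigenvectors of $S$ pick up a factor $\langle w, T w'\rangle$ with $w, w'$ parallel, which is $0$ by skew-symmetry — this is precisely the mechanism that already killed $\lambda_{A,1}$ in Lemma~\ref{prop:perturbation_expansion}. Similarly the $\Gamma_T$-type terms contribute off-diagonally and drop out of the $u_{S,2}$-$u_{S,2}$ matrix element. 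I would organize this as a short lemma isolating the claim ``$\langle u_{S,2}, \mathcal{M}_1 u_{S,2}\rangle$ and the symmetrized variant vanish'' and then assemble the conclusion; I expect the bookkeeping of which projector/resolvent factor annihilates which vector to be the part demanding care, while everything else is the standard simple-eigenvalue perturbation argument.

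\medskip

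Note: the proposition statement as displayed contains an apparent typo (the stray ``$C_N^{k,\rm skew} + O(\varepsilon^2)$'' on a second line); I read the intended claim as simply $C_N^{k,\varepsilon} = C_N^{k,\rm sym} + O(\varepsilon^2)$, and the proof plan above establishes exactly that.
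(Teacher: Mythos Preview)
Your proposal is correct and follows essentially the same line as the paper's proof. The only cosmetic difference is that the paper exploits the symmetry of $\mathcal{M}_0$ to write the first-order singular-value perturbation directly as $\|\mathcal{M}_0\| + \varepsilon\, u_{S,2}^T \mathcal{M}_1 u_{S,2} + O(\varepsilon^2)$, whereas you square first via $\mathcal{M}^\varepsilon(\mathcal{M}^\varepsilon)^T$; once you use $\mathcal{M}_0 u_{S,2} = \mathcal{M}_0^T u_{S,2} = C_N^{k,\rm sym} u_{S,2}$, your condition reduces to the paper's $u_{S,2}^T \mathcal{M}_1 u_{S,2} = 0$, which the paper dispatches by observing that both $T$ and $\Gamma_T$ are skew-symmetric (this is the precise statement behind your ``off-diagonal'' remark for the $\Gamma_T$-terms).
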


\begin{proof}

Starting from~\eqref{eq:3.30}, let us first note that $\mathcal{M}_0 = \Gamma_S(S^{-1} - k_{S}I)\Gamma_S S^{-1}$
has the same spectral decomposition as $S$, that is eigenvectors $u_{S,i}$ with corresponding eigenvalues
\[
     \left\{
    \begin{array}{ll}
        0 & \mbox{ for } i = 1 \\
        \frac{(k_{S,i}-k_S)k_{S,i}}{(k_{S,i}- k_N)^2} & \mbox{ for } 2 \le i \le \Nh.
    \end{array}
\right.
\]
From this, we deduce that
\begin{align*}
\|\mathcal{M}_0\|  = \max_{2\leq i \leq \Nh} \frac{|k_{S,i}-k_S|k_{S,i}}{|k_{S,i}- k_N|^2}
 = \frac{|k_{S,2}-k_S|k_{S,2}}{|k_{S,2}- k_N|^2} = C_N^{k, {\rm sym}}.
\end{align*}
Note also that the same holds for $\Gamma_S$ with eigenvalues
\[
     \left\{
    \begin{array}{ll}
        0 & \mbox{ for } i = 1 \\
        \frac{1}{k_{S,i}- k_N} & \mbox{ for } 2 \le i \le \Nh.
    \end{array}
\right.
\]

Then, noting that $k_{S,2}$ is a simple eigenvalue, we can
write down the Taylor expansion of the spectral norm as
\begin{align*}
    C^{k, \varepsilon}_N  = \|\mathcal{M}_0 + \varepsilon \mathcal{M}_1 + O(\varepsilon^2) \| =   \|\mathcal{M}_0\|
    + \varepsilon  u_{\mathcal{M},0}^T \mathcal{M}_1 u_{\mathcal{M},0} + O(\varepsilon^2),
\end{align*}
where $u_{\mathcal{M},0}$ the unit eigenvector corresponding to the largest eigenvalue of $\mathcal{M}_0$, 
that is $u_{\mathcal{M},0} = \pm u_{S,2}$. For simplicity, let us choose
$u_{\mathcal{M},0} = u_{S,2}$.

Then
\begin{align*}
 u_{\mathcal{M},0}^T \mathcal{M}_1 u_{\mathcal{M},0} &= -u_{\mathcal{M},0}^T\Gamma_SS^{-1}TS^{-1}\Gamma_SS^{-1}u_{\mathcal{M},0}
 + u_{\mathcal{M},0}^T\Gamma_S(S^{-1} - k_{S}I)\Gamma_TS^{-1}u_{\mathcal{M},0} \\
    &\quad -u_{\mathcal{M},0}^T\Gamma_S(S^{-1} - k_{S}I)\Gamma_S S^{-1}TS^{-1}u_{\mathcal{M},0} 
    + u_{\mathcal{M},0}^T\Gamma_T(S^{-1} - k_{S}I)\Gamma_S S^{-1}u_{\mathcal{M},0}\\
     &= 
     - \quad \frac{k_{S,2}^3}{(k_{S,2}-k_N)^2}
     u^T_{\mathcal{M},0} Tu_{\mathcal{M},0}
     +\frac{k_{S,2}(k_{S,2}-k_S)}{k_{S,2}-k_N}u^T_{\mathcal{M},0}\Gamma_Tu_{\mathcal{M},0} \\
    &\quad -\frac{k_{S,2}^2(k_{S,2}-k_S)}{(k_{S,2}-k_N)^2}u^T_{\mathcal{M},0} Tu_{\mathcal{M},0} 
    +\frac{k_{S,2}(k_{S,2}-k_S)}{k_{S,2}-k_N} u_{\mathcal{M},0}^T\Gamma_T u_{\mathcal{M},0}\\
    &=0,
\end{align*}
where we used that the matrices $T$ and $\Gamma_T$ are skew-symmetric.
This concludes the proof.
\end{proof}

We now illustrate the above bounds on toy numerical examples. Let us introduce the following matrices $S,T\in \R^{4\times 4}$
:
\begin{equation*}
    S = \begin{pmatrix} 2000 & 0 & 0 & 0 \\
    0 & 1500 & 0 & 0 \\
    0 & 0 & 1000 & 0 \\
    0 & 0 & 0 & 0.02 \end{pmatrix},\quad 
    T_0 = \begin{pmatrix} 0 & 1 & 1 & 1 \\
    -1 & 0 & 1 & 1 \\
    -1 & -1 & 0 & 1 \\
    -1 & -1 & -1 & 0 \end{pmatrix},\quad T = \frac{\|S\|}{\|T_0\|}T_0 .
\end{equation*}
It then holds that $k_S = 50$ and $ k_{S,2} = 0.001$.  Let us consider $k_N=k_S$. A second-order convergence of the difference $|C_N^{k,\varepsilon} - C_N^{k,\rm sym}|$ as a function of $\varepsilon$ is observed on Figure~\ref{fig:perturbation_2}. This is a strong indication that the estimate of Proposition~\ref{prop:prefactor_upper_bound} is sharp.

In our practical applications of interest, we will indeed observe that the operator is a perturbation of a symmetric operator, but the estimate of the prefactor by the one obtained using the symmetric part is not sufficiently good over a large range of the values of the parameters $\mu$, in particular because the spectral gap (see Assumption~\eqref{gap_assumption2}) is not uniformly bounded from below (see Section~\ref{sec:test_case_1} for a discussion). This is why we will resort to  a practical heuristic method to approximate the prefactor, as is now explained in the next section~\ref{sec:practical}.

\begin{figure}[h!]
    \centering
    \includegraphics[scale=0.6]{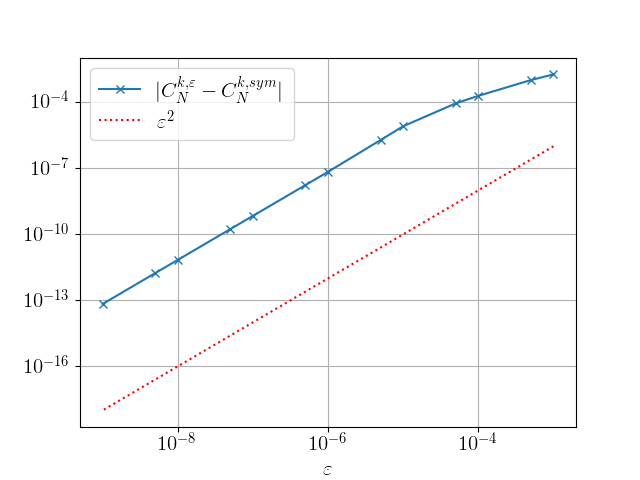}
    \caption{$|C_N^{k,\varepsilon} - C_N^{k,sym}|$ as a function of $\varepsilon$}
    \label{fig:perturbation_2}
\end{figure}


\subsection{Practical {\it a posteriori} error estimator}\label{sec:practical}

The aim of this section is to present the heuristic algorithm we use in order to estimate the prefactors $C_N^k$, $C_N^u$ and $C_N^{u^*}$ defined in Proposition~\ref{prop:eigenvalue_aposteriori} and Proposition~\ref{prop:prop_proj} respectively. The algorithm then yields approximations of these constants, denoted by 
$\overline{C}_N^k$, $\overline{C}_N^u$ and $\overline{C}_N^{u^*}$, which are used to build {\it a posteriori} error estimates for the greedy algorithm presented in Algorithm~\ref{algorithm:alg2}.

This heuristic procedure is based on the use of an estimation set of parameters values $\cP_{\rm pref} \subset \cP$, containing a finite number of elements, which does not contain any values of the parameters belonging to the training set $\cP_{\rm train}$. In other words, the estimation set $\cP_{\rm pref}$ is chosen so that $\cP_{\rm train} \cap \cP_{\rm pref} = \emptyset$. High-fidelity solutions of the eigenvalue problems (\ref{eq:hfp_lambda}) are computed in the offline phase for all $\mu \in \cP_{\rm pref}$.

For a given reduced space $V_N$, let us introduce the efficiency ratios: for all $\mu \in \cP$, 
\begin{equation}
\label{eq:ENk}
	\mathcal{E}^k_N(\mu) := \dfrac{\left|k_{\mu,N}-k_{\mu}\right|}{\eta_N^k(\mu)}, \; 	\mathcal{E}^u_N(\mu) := \dfrac{\left\|u_{\mu,N}-u_{\mu}\right\|}{\|R_N(\mu)\|} \; \mbox{ and } \; \mathcal{E}^{u^*}_N(\mu) := \dfrac{\left\|u^*_{\mu,N}-u^*_{\mu}\right\|}{\|R^*_N(\mu)\|} .
\end{equation}
The latter quantities are computed in the offline phase for all $\mu \in \cP_{\rm pref}$. 
By definition, it holds that for all $\mu \in \mathcal{P}$,
\begin{equation}
    \mathcal{E}^k_N(\mu) \le C_N^k(\mu), \;   \mathcal{E}^u_N(\mu) \le C_N^u(\mu) \; \mbox{ and } \; \mathcal{E}^{u^*}_N(\mu) \le C_N^{u^*}(\mu) .
\end{equation}

Our heuristic approach then consists in estimating the constants $C_N^k(\mu)$, $C_N^u(\mu)$ and $C_N^{u^*}(\mu)$ for all $\mu\in \cP$ by their maximum values over $\cP_{\rm pref}$. More precisely, defining
\begin{equation}\label{eq:Cbarmax}
 \overline{C}_N^k  := \max_{\mu \in \cP_{\rm pref}} \mathcal{E}^k_N(\mu), \qquad
        \overline{C}_N^u  := \max_{\mu \in \cP_{\rm pref}}\mathcal{E}^u_N(\mu), \text{ and } \qquad
     \overline{C}_N^{u^*}  := \max_{\mu \in \cP_{\rm pref}} \mathcal{E}^{u^*}_N(\mu),    
\end{equation}
the practical {\it a posteriori} error estimates used in the greedy algorithm are then defined by

\begin{equation}\label{eq:Cbarmax_delta}
\Delta_N^k(\mu) := \overline{C}_N^k \eta_N^k(\mu),\qquad
\Delta_N^u(\mu) := \overline{C}_N^u \|R_N(\mu)\|,\text{ and }\qquad
\Delta_N^{u^*}(\mu) := \overline{C}_N^{u^*} \|R^*_N(\mu)\|.
\end{equation}

The efficiency of this practical approach will be illustrated in the next section, where numerical results obtained in neutronics applications are presented. 

\section{Numerical results}\label{sec:num}

The aim of this section is to illustrate the behaviour of the proposed reduced basis method on examples arising from neutronics applications. The considered physical model is presented in Section~\ref{sec:physical_context}. 
In Section~\ref{sec:high-fidelity_discretization}, we describe the high-fidelity discretization of the problem using a finite element method. 
The parametric dependency of the coefficients of the mathematical equations describing the model enables the matrices to be assembled using a so-called affine decomposition, discussed in Section~\ref{sec:affine_decomposition}. 
The eigenvalue solver is described in Section~\ref{sec:eigenvalue_solver}. Finally, numerical tests presented in Section~\ref{sec:numerical_tests} give an application of the reduced basis method to nuclear core computations.

\subsection{ The continuous model: two-group neutron diffusion equations}
\label{sec:physical_context}
The stationary neutron flux density in a reactor core is determined by solving the transport equation which depends on six variables: position (3-dimensional), velocity direction (2-dimensional), the velocity norm or energy (1-dimensional). It physically states the balance between the emission of neutrons by fission and the absorption, scattering, and leakage of neutrons at the boundary of the spatial domain. 
The most common discretization of the energy variable is the multigroup approximation where the energy domain is divided into subintervals called energy groups. 
The reactor core is modeled by a bounded, connected and open subset $\Omega$ of $\mathbb{R}^d$ (where typically $d=3$, but one ot two-dimensional are also considered) having a Lipschitz boundary which is piecewise regular. In practice, the neutron flux density is usually modeled by the multigroup neutron diffusion equations~\cite[Chapter 7]{DuHa76} at the reactor core scale.

Let us now make precise the specific two-group neutron diffusion model we consider in this work. For a given value $\mu \in \cP$ (the parameter set $\cP$ will be presented below), we consider the two-group neutron diffusion equations where the neutron flux $u_\mu^{\rm ex}:=\left(\phi_ {1,\mu}^{\rm ex},\phi_{2,\mu}^{\rm ex}\right)\in H^1_0(\Omega)^2$ is decomposed into the neutron flux of high energy  $\phi_ {1,\mu}^{\rm ex}\in H^1_0(\Omega)$ and thermal energy $\phi_{2, \mu}^{\rm ex}\in H^1_0(\Omega)$. This model reads as
\begin{align}
\label{eq:diffusion_equation}
&          \text{Find $\left(u^{\rm ex}_\mu:=\left(\phi^{\rm ex}_{1,\mu},\phi^{\rm ex}_{2,\mu}\right), \lambda^{\rm ex}_{\mu}\right)\in H^1_0(\Omega)^2 \times \R$ such that } \lambda^{\rm ex}_\mu \text{ is an eigenvalue with minimal modulus, and}\nonumber\\
&\left\{
\begin{aligned}
  & -\text{div}\left(D_{1,\mu}\nabla\phi_{1,\mu}^{\rm ex}\right) +   \Sigma_{11,\mu}\phi_{1,\mu}^{\rm ex} + \Sigma_{12,\mu}\phi_{2,\mu}^{\rm ex}    =\lambda_{\mu}^{\rm ex}
  F_{1,\mu}\left(\phi^{\rm ex}_{1,\mu},\phi^{\rm ex}_{2,\mu}\right)
  \quad \text{ in } \Omega ,\\
&     -\text{div}\left(D_{2,\mu}\nabla\phi_{2,\mu}^{\rm ex}\right) + \Sigma_{21,\mu}\phi^{\rm ex}_{1,\mu}  + \Sigma_{22,\mu}\phi^{\rm ex}_{2,\mu}   =
\lambda^{\rm ex}_{\mu}
F_{2,\mu}\left(\phi^{\rm ex}_{1,\mu},\phi^{\rm ex}_{2,\mu}\right)
\quad \text{ in } \Omega ,\\
&	 \phi^{\rm ex}_{i,\mu} = 0,\; \text{ on } \partial\Omega , \; i = 1,2,\\
\end{aligned}
\right.
\end{align}
supplemented with a normalization condition on $u^{\rm ex}_\mu$. Here, for all $i,j \in \{1,2\}$, all $\mu \in \cP$ and all $\phi_1, \phi_2 \in H^1_0(\Omega)$,
\begin{itemize}
\item $F_{i,\mu}\left(\phi_1,\phi_2\right) := \chi_{i,\mu}\left((\nu\Sigma_f)_{1,\mu}\phi_1 + (\nu\Sigma_f)_{2,\mu}\phi_2\right)$ ; 
\item $\chi_{i,\mu}: \Omega \to \R$ is the neutron total spectrum of group $i$;
\item $\nu_{i,\mu}: \Omega \to \R$ is  the average number of neutrons emitted per fission of group $i$;
\item $\Sigma_{fi,\mu}: \Omega \to \R$ is the fission cross section of group $i$;
\item $D_{i,\mu}: \Omega \to \R_+$ is the diffusion coefficient of group $i$;
\item 
$\Sigma_{ij,\mu}: \Omega \to \R$ with
$\Sigma_{ij,\mu} =\left\{\begin{aligned}
    \Sigma_{ti,\mu}-\Sigma_{s,0,ii,\mu} \quad \text{ if }i=j,\\ 
    -\Sigma_{s,0,ij,\mu}  \quad \text{ otherwise};
\end{aligned}\right.$
\item $\Sigma_{ti,\mu}: \Omega \to \R$ is  the total cross section of group $i$; 
\item $\Sigma_{s,0,ij,\mu}: \Omega \to \R$ is the Legendre moment of order 0 of the scattering cross section from group $i$ to group $j$.
\end{itemize}
Note that in the equations above, we used the short-hand notation $(\nu \Sigma_{f})_{i,\mu}$ to refer to the product $\nu_{i,\mu}\Sigma_{fi,\mu}$ for $i=1,2$.  
The so-called effective multiplication factor $\displaystyle k^{\rm ex}_{\mu}:=\frac{1}{\lambda^{\rm ex}_\mu}$ measures the balance between the production and {loss} of neutrons. If $k^{\rm ex}_{\mu}=1$, the nuclear chain reaction is self-sustaining; if $k^{\rm ex}_{\mu}>1$, the chain reaction is diverging; if $k^{\rm ex}_{\mu}<1$, the chain reaction vanishes.

\medskip

Let us now describe the considered parametric dependency. We introduce a partition $(\Omega_m)_{m=1}^{M}$ of the domain $\Omega$ with $M\in \N^*$ so that for all $1\leq m \leq M$, $\Omega_m$ is a domain with Lipschitz, piecewise regular boundaries.
For $i=1,2$, the coefficients $D_{i,\mu}$, $\Sigma_{ij,\mu}$, $\chi_{i,\mu}$, $(\nu\Sigma_f)_{i,\mu}$ are assumed to be piecewise regular on each domain $\Omega_m$ for $1\leq m \leq M$. 
The parameter set $\cP$ then refers to the set of values that each of these coefficients can possibly take on each subdomain $(\Omega_m)_{1\leq m \leq M}$. In other words, the choice of a parameter value $\mu \in \cP$ corresponds to a choice of the values of each of these coefficients on all the subdomains. 

\medskip

In the following, we assume that the set of admissible parameter values $\cP$ is such that all the coefficients of the model belong to $L^\infty(\Omega)$ and that 
there exists $\alpha>0$ and $0<\varepsilon<1$ such that for all $\mu \in \cP$ and all $i,j\in\{1,2\},$ $i\neq j$,  almost everywhere in $\Omega$,
\begin{align*}
 &   \alpha \leq D_{i,\mu} \quad  \text{ a.e. in }\Omega, \\
  &   \alpha \leq \Sigma_{ii,\mu} \quad \text{ a.e. in }\Omega, \\
 &   |\Sigma_{ij,\mu}|\leq \varepsilon \Sigma_{ii,\mu} \quad \text{ a.e. in }\Omega, \\
&    0\leq     (\nu\Sigma_f)_{i,\mu} \quad \text{ a.e. in }\Omega , \\
&\text{there exists } \tilde{i},\tilde{j}\in\{1,2\} \text{ such that }\chi_{\tilde{i},\mu}(\nu\Sigma_f)_{\tilde{j},\mu} \neq 0 \in L^\infty(\Omega), 
\end{align*}
so that Problem~\eqref{eq:diffusion_equation} is well-posed for all $\mu \in \cP$. The variational formulation associated to Problem~\eqref{eq:diffusion_equation} writes:
\begin{align}
\label{eq:diffusion_vf}
&          \text{Find $\left(u_\mu^{\rm ex}:=\left(\phi^{\rm ex}_{1,\mu},\phi^{\rm ex}_{2,\mu}\right), \lambda^{\rm ex}_{\mu}\right)\in (H^1_0(\Omega)^2\times \mathbb{R})$ such that } \lambda^{\rm ex}_\mu \text{ is an eigenvalue with minimal modulus and},\nonumber \\
& a_\mu(\left(\phi^{\rm ex}_{1,\mu},\phi^{\rm ex}_{2,\mu}\right),\left(\psi_1,\psi_2\right)) =\lambda^{\rm ex}_\mu b_\mu(\left(\phi^{\rm ex}_{1,\mu},\phi^{\rm ex}_{2,\mu}\right),\left(\psi_1,\psi_2\right)) \quad \text{ for all }(\psi_1,\psi_2)\in H^1_0(\Omega)^2, 
\end{align}
where for all $(\phi_1,\phi_2), (\psi_1, \psi_2)\in H^1_0(\Omega)^2$,
\begin{align*}
    a_\mu(\left(\phi_1,\phi_2\right),\left(\psi_1,\psi_2\right)) &:= \int_\Omega \left(D_{1,\mu}\nabla\phi_1\right)\cdot\nabla\psi_1 + \Sigma_{11,\mu}\phi_1\psi_1 + \Sigma_{12,\mu}\phi_2\psi_1 
    \\
    & \quad + \int_\Omega \left(D_{2,\mu}\nabla\phi_2 \right)\cdot\nabla\psi_2 + \Sigma_{21,\mu}\phi_1\psi_2  + \Sigma_{22,\mu}\phi_2\psi_2,\\
    b_\mu(\left(\phi_1,\phi_2\right),\left(\psi_1,\psi_2\right)) &:=  \int_\Omega\chi_{1,\mu}\left((\nu\Sigma_f)_{1,\mu}\phi_1 + (\nu\Sigma_f)_{2,\mu}\phi_2\right)\psi_1 \\
    \quad & + \int_\Omega\chi_{2,\mu}\left((\nu\Sigma_f)_{1,\mu}\phi_1 + (\nu\Sigma_f)_{2,\mu}\phi_2\right)\psi_2,
\end{align*}
supplemented with a normalization condition on $u_\mu^{\rm ex}$.
The associated adjoint problem then reads,
\begin{align}
\label{eq:diffusion_vf_adjoint}
&          \text{Find $\left(u_\mu^{*, {\rm ex}}:= \left(\phi_{1,\mu}^{*, {\rm ex}},\phi_{2,\mu}^{*, {\rm ex}}\right), \lambda^{\rm ex}_{\mu}\right)\in (H^1_0(\Omega)^2\times \mathbb{R})$ such that } \lambda^{\rm ex}_\mu \text{ is an eigenvalue with minimal modulus and},\nonumber \\
& a_\mu(\left(\psi_1,\psi_2\right),\left(\phi_{1,\mu}^{*, {\rm ex}},\phi_{2,\mu}^{*, {\rm ex}}\right))) =\lambda^{\rm ex}_\mu b_\mu(\left(\psi_1,\psi_2\right),\left(\phi_{1,\mu}^{*, {\rm ex}},\phi_{2,\mu}^{*, {\rm ex}}\right)) \quad \text{ for all }(\psi_1,\psi_2)\in H^1_0(\Omega)^2,
\end{align}
supplemented with a a normalization condition on $u_\mu^{*, \rm ex}$.

\subsection{The high-fidelity discretization}
\label{sec:high-fidelity_discretization}

We describe in this section the high-fidelity discretization of the continuous problem introduced in the previous section, that we consider as the reference problem in our reduced basis context. 
 Let $\mathcal{T}_{\mathcal{N}}$ be a shape-regular mesh of $\Omega$ and ${\widetilde{V}}_\mathcal{N}$ be an associated conformal finite element approximation space of dimension $\widetilde{\mathcal{N}}$. We also denote by $V_\Nh:= (\widetilde{V}_\Nh)^2$ which has dimension $\Nh = 2 \widetilde{\Nh}$. We assume that the mesh is such that the cross sections are regular on each element. 

The discrete variational formulation associated to Problem~\eqref{eq:diffusion_vf} writes
\begin{align}
\label{eq:diffusion_vf_discrete}
&          \text{Find $\left(u^\Nh_\mu:=\left(\phi_{1,\mu}^\mathcal{N},\phi_{2,\mu}^\mathcal{N}\right), \lambda_{\mu}^\Nh\right)\in (V_\mathcal{N}\times \mathbb{R})$ such that  } \lambda_{\mu}^\Nh\text{ is an eigenvalue with minimal modulus and},\nonumber \\
& a_\mu(\left(\phi_{1,\mu}^\mathcal{N},\phi_{2,\mu}^\mathcal{N}\right),\left(\psi_{1}^\mathcal{N},\psi_{2}^\mathcal{N}\right)) =\lambda_{\mu}^\Nh b_\mu(\left(\phi_{1,\mu}^\mathcal{N},\phi_{2,\mu}^\mathcal{N}\right),\left(\psi_{1}^\mathcal{N},\psi_{2}^\mathcal{N}\right)),\, \text{ for all }(\psi_{1}^\mathcal{N},\psi_{2}^\mathcal{N})\in V_\mathcal{N} = (\widetilde{V}_\Nh)^2, 
\end{align}
where $(\phi_{1,\mu}^\mathcal{N},\phi_{2,\mu}^\mathcal{N})$ satisfies a normalization condition.
We refer to~\cite{CRMATH_2021__359_5_533_0} for the {\it a priori} error analysis of Problem~\eqref{eq:diffusion_vf_discrete}.
Similarly, the discrete variational formulation associated to Problem~\eqref{eq:diffusion_vf_adjoint} reads
\begin{align}
\label{eq:diffusion_vf_discrete_adjoint}
&          \text{Find $\left(u^{*,\Nh}_\mu:=(\phi_{1,\mu}^{*,\mathcal{N}},\phi_{2,\mu}^{*,\mathcal{N}}), \lambda_{\mu}^\mathcal{N}\right)\in (V_\mathcal{N}\times \mathbb{R})$ such that  } \lambda_{\mu}^\Nh\text{ is an eigenvalue with minimal modulus and},\nonumber \\
& a_\mu(\left(\psi_{1}^\mathcal{N},\psi_{2}^\mathcal{N}\right),(\phi_{1,\mu}^{*,\mathcal{N}}(\mu),\phi_{2,\mu}^{*,\mathcal{N}})) =\lambda_{\mu}^\Nh b_\mu(\left(\psi_{1}^\mathcal{N},\psi_{2}^\mathcal{N}\right),(\phi_{1,\mu}^{*,\mathcal{N}},\phi_{2,\mu}^{*,\mathcal{N}})),\, \text{ for all }(\psi_{1}^\mathcal{N},\psi_{2}^\mathcal{N})\in V_\Nh = (\widetilde{V}_\mathcal{N})^2, 
\end{align}
where $(\phi_{1,\mu}^{*,\mathcal{N}},\phi_{2,\mu}^{*,\mathcal{N}})$ satisfies a normalization condition.

Let us denote by $(\theta^1, \cdots, \theta^\mathcal{N})$ a basis 
of $V_\mathcal{N}$. 
Problem \eqref{eq:diffusion_vf_discrete} reads as follows in matrix form. For all $\mu \in \mathcal{P}$, $i=1,2,$ let $u_\mu:=(u_{\mu,k})_{1\leq k \leq \mathcal{N}} \in \mathbb{R}^\mathcal{N}$ be the coordinates of $u_\mu^\mathcal{N}$ in the basis $(\theta^1, \cdots, \theta^\mathcal{N})$ so that 
\begin{equation}
    \label{eq:umu_decomp}
    u_\mu^\mathcal{N} := \sum_{k=1}^\mathcal{N} u_{\mu,k}\theta^k.
\end{equation}
Let us define the matrices $A_\mu:=\left( a_{\mu}(\theta^k,\theta^l)\right)_{1\leq k,l \leq \mathcal{N}}$ and $B_\mu := \left( b_\mu(\theta^k,\theta^l)\right)_{1\leq k,l \leq \mathcal{N}}$. The pair $\left( u_\mu, \lambda_\mu\right) \in \mathbb{R}^{\mathcal{N}} \times \mathbb{R}$ is then solution to 
\begin{align}
\label{eq:diffusion_vf_discrete_matricial}
A_\mu u_\mu  = \lambda_\mu  B_\mu u_\mu,
\end{align}
where $u_\mu$ satisfies a normalization condition. This is the high-fidelity eigenvalue problem of the form~\eqref{eq:hfp_lambda} we consider in the following numerical tests.

Likewise, for Problem~\eqref{eq:diffusion_vf_discrete_adjoint}, the pair $\left( u^*_\mu, \lambda_\mu\right) \in \mathbb{R}^{\mathcal{N}} \times \mathbb{R}$ is solution to 
\begin{align}
\label{eq:diffusion_vf_discrete_matricial_adjoint}
(A_\mu)^T u_\mu^* = \lambda_\mu  (B_\mu)^T u_\mu^*
\end{align}
together with a normalization condition on $u_\mu^*$. Here,  $u_\mu^* = (u_{\mu,k}^*)_{1\leq k \leq \Nh} \in \R^\Nh$ is the vector of coordinates of the function $u_\mu^{*,\Nh}$ in the basis $(\theta^1, \ldots, \theta^\Nh)$, i.e. 
$$
u_\mu^{*,\Nh}= \sum_{k=1}^{\Nh}u_{\mu,k}^* \theta^k.
$$
Problem~(\ref{eq:diffusion_vf_discrete_matricial_adjoint}) is the adjoint high-fidelity eigenvalue problem of the form~\eqref{eq:hfp_lambda_adjoint} that we consider in the following numerical tests.

\subsection{Affine decomposition of the coefficients}
\label{sec:affine_decomposition}

In the following numerical tests, the domain $\Omega$ is chosen as $[0,L]^2$ for some $L>0$. We introduce a partition $(\Omega_k)_{k=1}^K$ of the domain $\Omega$ and the {parameter functions}
entering in the definition of Problem~\ref{eq:diffusion_equation} are assumed to be piecewise constant on each $\Omega_k$ for $1\leq k \leq K$.
{The parameter $\mu$ is thus  a $K$-dimensional vector of either scalars or vectors (containing macro-parameters such as the material, the burn up, the fuel temperature, or the boron concentration for example), which allows to set the values  of the coefficients $D_1, \Sigma_{11}, \Sigma_{12}, D_2, \Sigma_{21}, \Sigma_{22}, \chi_1, \chi_2, \Sigma_{f1}, \Sigma_{f2}$ on the domain $\Omega_k$, for each $1\leq k \leq K$.
}
We remark that for all $\mu\in \mathcal{P}$, the matrices $A_{\mu}$ and $B_{\mu}$ write
$$
       A_{\mu} = \displaystyle \sum_{k=1}^{K}  \sum_{p=1}^{6} f_p(\mu_k)A_{k,p} + M_{bc} \text{  and  } 
       B_{\mu} = \displaystyle\sum_{k=1}^{K} \sum_{q=1}^{4} g_q(\mu_k)B_{k,q},
$$
where $f(\mu_k)$ and $g(\mu_k)$ are the 
vectors 
defined by
    \begin{align*}
        &   f(\mu_k) = \left(D_1(\mu_k), \Sigma_{11}(\mu_k), \Sigma_{12}(\mu_k), D_2(\mu_k), \Sigma_{21}(\mu_k), \Sigma_{22}(\mu_k) \right) \\
        &   g(\mu_k) = \left(\left(\chi_1\nu\Sigma_{f1}\right)(\mu_k),\left(\chi_1\nu\Sigma_{f2}\right)(\mu_k),\left(\chi_2\nu\Sigma_{f1}\right)(\mu_k),\left(\chi_2\nu\Sigma_{f2}\right)(\mu_k)\right),
    \end{align*}
$A_{k,p}$ and $B_{k,q}$ ($1\leq k\leq K$, $1\leq p\leq 6$, $1\leq q\leq 4$) are parameter-independent $\Nh\times \Nh$ matrices, and $M_{bc} \in \mathbb{R}^{\Nh \times \Nh}$ is a parameter-independent matrix which stems from the discretization of the boundary condition. As a consequence, all these matrices can be pre-computed in order to efficiently assemble the matrices $A_\mu$ and $B_\mu$ online, and estimate the residuals $R_N(\mu)$ and $R^*_N(\mu)$ as we now explain.

Thanks to the affine decomposition of the matrices $A_{\mu}$ and $B_{\mu}$ above, the residual norm is easily computable online, as it only requires algebraic operations over {vectors of the size of the (small) reduced basis, which is $N$.} Indeed, let $(\xi_1,\ldots,\xi_N)$ be an orthonormal basis of the chosen reduced space {for the scalar product $\langle\cdot,\cdot\rangle$}, and let $V_N \in \mathbb{R}^{\Nh \times N}$ be the matrix containing the coordinates of the basis $(\xi_1, \ldots, \xi_N)$ {in the canonical basis of $\R^\Nh$}. For $1\leq k,l \leq K, 1\leq p,p'\leq 6$, $1\leq q,q'\leq 4$, we define, in the offline stage, the reduced matrices of dimension $N\times N$ as follows:

\begin{align*}
    &   D^N_{k,l,p,p'} = V_N^tA_{k,p}^t \mathbb{X}^{-1}A_{l,p'}V_N \\
    &   E^N_{k,l,p,q} = V_N^tA_{k,p}^t \mathbb{X}^{-1}B_{l,q}V_N \\
    &   F^N_{k,l,q,q'} = V_N^tB_{k,q}^t \mathbb{X}^{-1}B_{l,q'}V_N \\
    &   D^N_{bc,k,p} = V_N^tM_{bc}^t \mathbb{X}^{-1}A_{k,p}V_N \\
    &   E^N_{bc,k,q} = V_N^tM_{bc}^t \mathbb{X}^{-1}B_{k,q}V_N \\
    &   F^N_{bc} = V_N^tM_{bc}^t \mathbb{X}^{-1}M_{bc}V_N,
\end{align*}
where $\mathbb{X}$ stands for the Gram matrix {of size $\Nh\times \Nh$} for the considered scalar product $\langle\cdot,\cdot\rangle$, commonly called the mass matrix, {and $A^t$ denotes the transpose of the matrix $A$}.
Then, in the online stage, for a given parameter $\mu$, we can assemble the residual norm as
\begin{align*}
    \|R_N(\mu)\| := \|\left(B_{\mu}-k_{\mu,N} A_{\mu}\right)u_{\mu,N}\| = \sqrt{c_{\mu,N}^tG_{\mu,N}c_{\mu,N}},
\end{align*}
with
\begin{align*}
    G_{\mu,N} =& |k_{\mu,N}|^2 \left(\displaystyle \sum_{k,l=1}^{K}  
    \sum_{p,p'=1}^{6}  f_p(\mu_k)f_{p'}(\mu_l)D_{k,l,p,p'}^N  + 
    \sum_{k=1}^{K} \sum_{p=1}^{6}
    f_p(\mu_k) \left(D^N_{bc,k,p} + (D^N_{bc,k,p})^t\right) + F^N_{bc}\right) \\
    &- k_{\mu,N} \displaystyle \left( \sum_{k,l=1}^{K}\sum_{p=1}^{6} \sum_{q=1}^{4}
    f_p(\mu_k)g_q(\mu_l)\left(E_{k,l,p,q}^N+(E_{k,l,p,q}^N)^t\right)\right) + \sum_{k=1}^{K} \sum_{q=1}^{4}  g_q(\mu_k)\left(E^N_{bc,k,q}+(E^N_{bc,k,q})^t\right) \\
    &+ \displaystyle \sum_{k,l=1}^{K}\sum_{q,q'=1}^{4}  g_q(\mu_k)g_{q'}(\mu_l)F_{k,l,q,q'}^N .
\end{align*}
A similar construction is readily possible for $\|R^*_N(\mu)\|$.

\subsection{Eigenvalue solver}
\label{sec:eigenvalue_solver}

The eigenvalue solver, for both high-fidelity and reduced-order models, relies on the inverse power method given in Algorithm \ref{algorithm:power_it}. In practice, this algorithm is run with relative error tolerances set to $\tau_u=10^{-6}$ and $\tau_{\lambda}=10^{-7}$.

\begin{algorithm}
\caption{\textsc{Inverse power method - solve $\mathtt{A} u = \lambda \mathtt{B}u$}}
\label{algorithm:power_it}
\begin{algorithmic}
    \STATE{\bf Input:} $\mathtt{A} \in \mathbb{R}^{M\times M}$, $\mathtt{B}\in\mathbb{R}^{M\times M}$, $\tau_u$: acceptance criterion for the eigenvector, $\tau_{\lambda}$: acceptance criterion for the eigenvalue
    \STATE{Choose a random positive unit vector $u_0$ and $k_0\neq 0$}
    \STATE{Set $i=0$ and STOP=\FALSE}
    \WHILE{(STOP==\FALSE)}
    \STATE{Solve $\mathtt{A}v_{i+1} = \mathtt{B}u_i$}
    \STATE{$u_{i+1} = \dfrac{v_{i+1}}{\left\|v_{i+1}\right\|}$}
    \STATE{$k_{i+1} = \langle v_{i+1}, u_i \rangle$}
    \STATE{STOP=  $\Bigg[ \dfrac{\left\|u_{i+1}-u_i\right\|}{\left\|u_i\right\|} \le \tau_u$ \AND $\dfrac{\left|k_{i+1}-k_i\right|}{\left|k_i\right|} \le \tau_{\lambda} \Bigg]$ }
    \STATE{$i=i+1$}
    \ENDWHILE
    \STATE{\bf Output:} $(u,\lambda)=\left(u_{i}, \frac{1}{k_{i}}\right)$
\end{algorithmic}
\end{algorithm}

The direct high-fidelity eigenvalue problem~\eqref{eq:hfp_lambda} is solved by applying Algorithm \ref{algorithm:power_it} with $\mathtt{A} = A_{\mu}$ and $\mathtt{B} = B_{\mu}$. Likewise, the  adjoint eigenvalue problem~\eqref{eq:hfp_lambda_adjoint} is solved by applying Algorithm \ref{algorithm:power_it} with $\mathtt{A} = A^T_{\mu}$ and $\mathtt{B} = B^T_{\mu}$. The resolutions of the reduced eigenvalue problems are performed similarly.

\subsection{Numerical tests}
\label{sec:numerical_tests}

The aim of this section is to illustrate the numerical behaviour of the reduced basis method and the proposed {\it a posteriori} error estimators on two different numerical test cases. Let us introduce the notation 
\begin{align*}
    &   e_N^k(\mu) = \left|k_{\mu}-k_{\mu,N}\right|,\quad e_N^{k,rel}(\mu) = \dfrac{\left|k_{\mu}-k_{\mu,N}\right|}{|k_{\mu}|},\\
    &   e_N^u(\mu) = \|u_{\mu}-u_{\mu,N}\|_{\ell^2},\quad 
    e_N^{u,rel}(\mu) = \dfrac{\|u_{\mu}-u_{\mu,N}\|_{\ell^2}}{\|u_{\mu}\|_{\ell^2}},\quad e_{N,L^2}^{u,rel}(\mu) = \dfrac{\|u_{\mu}-u_{\mu,N}\|_{L^2}}{\|u_{\mu}\|_{L^2}},\\
    &   e_N^{u^*}(\mu) = \|u^*_{\mu}-u^*_{\mu,N}\|_{\ell^2},
\end{align*}
where the $\ell^2$ norm is the Euclidean norm, and $L^2$ refers to the $L^2$ functional norm applied to the functions in the space $V_\Nh$ built from the vectors in $\R^{\mathcal N}$ through~\eqref{eq:umu_decomp}.
Moreover, we denote by $t_{\rm HF}$ and $t_{\rm RB}$ the  mean computational times for one run (for a given parameter) of the high-fidelity and reduced solvers respectively.

\subsubsection{Test case 1: 2D two-group toy example}\label{sec:test_case_1}

The reduced basis method is first run on a simple test case where $L=60$ (we use here reduced units)  
modeled with $\mathcal{N}= 2 \times 841$ degrees of freedom along $K=4$ subdomains. Figure~\ref{fig:toycore} shows the mesh used for the test case as well as the decomposition of $\Omega$ into four subdomains. Here, we set $B_{\mu}=I$ for all $\mu \in \mathcal P$.
\begin{figure}[h!]
    \centering
    \includegraphics[scale=0.5]{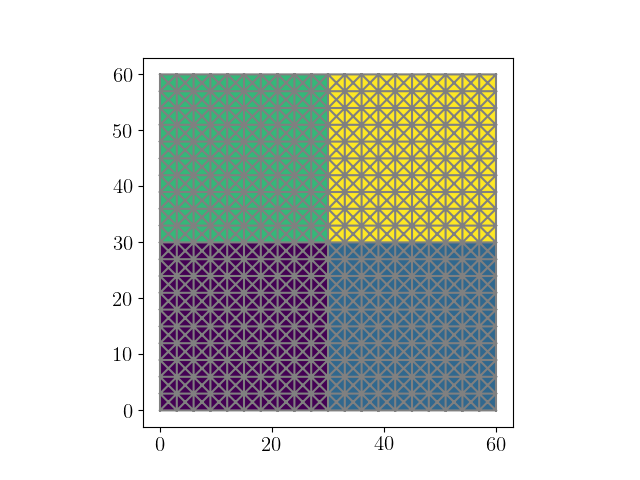}
    \caption{Domain of calculation for the two-group toy example with its associated mesh}
    \label{fig:toycore}
\end{figure}

The training and test sets $\mathcal P_{\rm train}$ and $\mathcal P_{\rm test}$ are constructed using the following random sampling scheme: in each subdomain $\Omega_k$, for $1\leq k \leq K$, the values of the coefficients are independently distributed according to the following laws:
\begin{itemize}
    \item $\Sigma_{s,0,ij}: $ uniform law on $[0,0.15]$ , $1\leq i,j\leq 2$;
    \item $\Sigma_{t1}$ and $\Sigma_{t2}$: uniform law on $[2(\Sigma_{s,0,12}+\Sigma_{s,0,21}),0.7]$;
    \item $\displaystyle D_i=\frac{1}{3\Sigma_{ti}}$, $i=1,2$;
    \item $\chi_i\nu\Sigma_{fj}=\delta_{ij}$, $1\leq i,j\leq 2$.
\end{itemize}
The coefficients are chosen so that the coercivity of Problems~\eqref{eq:diffusion_vf} and~\eqref{eq:diffusion_vf_adjoint} are ensured.
The parametric spaces $\mathcal{P}_{\text{train}}$ and $\mathcal{P}_{\text{test}}$ are selected following the random sampling procedure described above so that $\#\mathcal{P}_{\text{train}} = 300$, $\#\mathcal{P}_{\text{test}} = 50$ and $\mathcal{P}_{\text{train}} \cap \mathcal{P}_{\text{test}} = \emptyset$.
In the offline stage, the greedy algorithm is performed using the \textit{a posteriori} estimator $$\Delta_N(\mu) =\eta_N^k(\mu)$$ defined in~\eqref{eq:etaNK} for all $\mu \in \mathcal P$ (in other words, we choose here $\bar{C}_N^k(\mu)=1$ for all $\mu$, following the notation~\eqref{eq:Cbar}).

The left part of Figure \ref{fig:toycore_mean_errors} depicts the fast convergence of the reduced basis method with respect to the size of the reduced space. The relative errors on the eigenfunctions $e_N^{u,rel}(\mu)$ and $e_{N,L^2}^{u,rel}(\mu)$ follow the same trend. The relative error $e_N^{k,rel}(\mu)$ between the high-fidelity solution and the reduced basis solution on the multiplication factor $k_\mu$ reaches the order of $10^{-5}$
for $N=100$. Moreover, this error decreases by 4 orders of magnitude from $N=10$ to $N=100$. As expected, the error on the eigenvalue decreases twice faster than the error on the eigenvector.
Moreover, we checked that the value of the a posterior error estimator $\eta_N^k(\mu)$ stays below $10^{-12}$ for the selected parameters, as expected.

\begin{figure}[h!]
    \centering
    \includegraphics[scale=0.5]{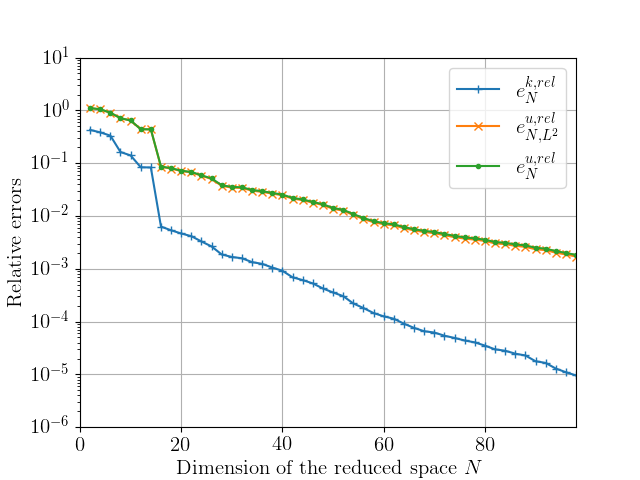}
    \includegraphics[scale=0.5]{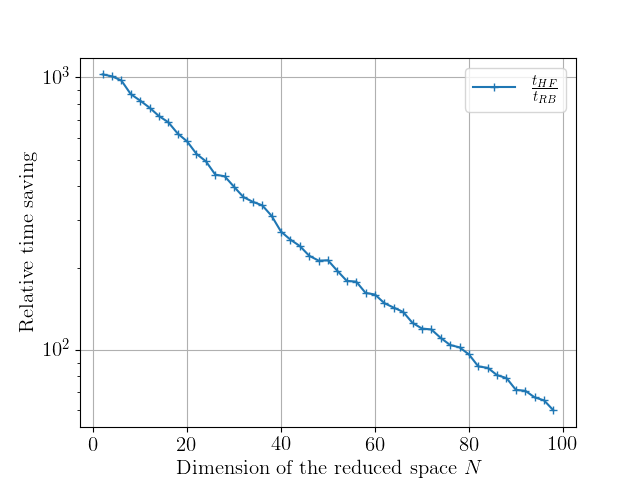}
    \caption{(Left) Mean relative errors over $\mathcal{P}_{\text{test}}$; (Right) Relative time saving factor $\dfrac{t_{\rm HF}}{t_{\rm RB}}$ as a function of the dimension of the reduced space $N$.
    }
    \label{fig:toycore_mean_errors}
\end{figure}

In terms of computational time, the right part of Figure \ref{fig:toycore_mean_errors} shows that, in the chosen setting, while the high-fidelity solution is computed in about 5.8s, the reduced solution is computed within up to 0.09s, which is overall 60 up to 115 times faster than the high-fidelity solver to obtain a relative error of order $10^{-4}$ to $10^{-5}$ on the eigenvalue.

It is also interesting to look at the behavior of the implemented \textit{a posteriori} error estimators. The relation between the error $|k_N(\mu)-k(\mu)|$ and the  estimator $\Delta_N(\mu) =\eta_N^k(\mu)$ we used here can be first analyzed by looking at the prefactor $C_N^k(\mu)$, defined in~\eqref{eq:C_N}. The  value of $C_N^k(\mu)$ on the test set $\mathcal P_{\rm test}$ is presented in Figure~\ref{fig:toycore_prefactor}. In that particular case, we 
fall into the framework developed in
Section \ref{sec:perturbation}. Indeed, when we compute the perturbation magnitude $\varepsilon_{\mu}$ as
\begin{equation}
    \varepsilon_{\mu} = \dfrac{\left\|\frac{A_{\mu}-A_{\mu}^T}{2}\right\|}{\left\|\frac{A_{\mu}+A_{\mu}^T}{2}\right\|},
\end{equation}
we observe that $\varepsilon_\mu$ varies between $3\times 10^{-7}$ and $3\times 10^{-6}$ for $\mu \in \mathcal{P}_{\text{test}}$.
 Therefore, we expect $C_N^{k,{\rm sym}}(\mu)$ defined in~\eqref{eq:CNksym} to be a a good approximation of $C_N^k(\mu)$. Unfortunately, this is not always the case as we observe on the left plot of Figure~\ref{fig:toycore_prefactor}. 
Actually, in the cases where the prefactors differ a lot, we observe that condition~\eqref{gap_assumption2} in Proposition~\eqref{prop:prefactor_upper_bound} is not satisfied, which explains why the perturbative expansions may not be sharp.

\begin{figure}[h!]
    \centering
    \includegraphics[scale=0.5]{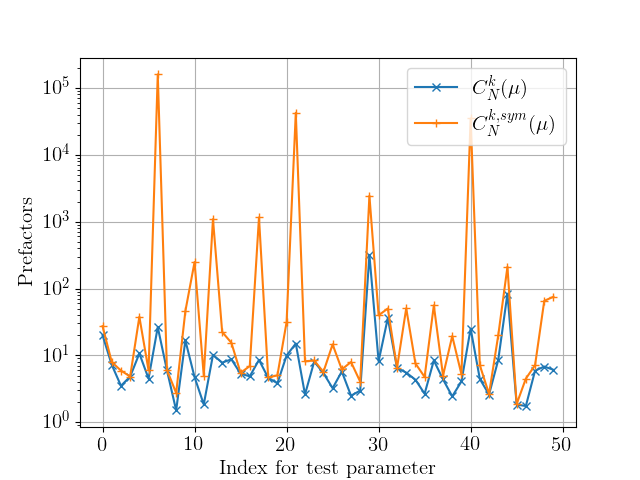}
     \includegraphics[scale=0.5]{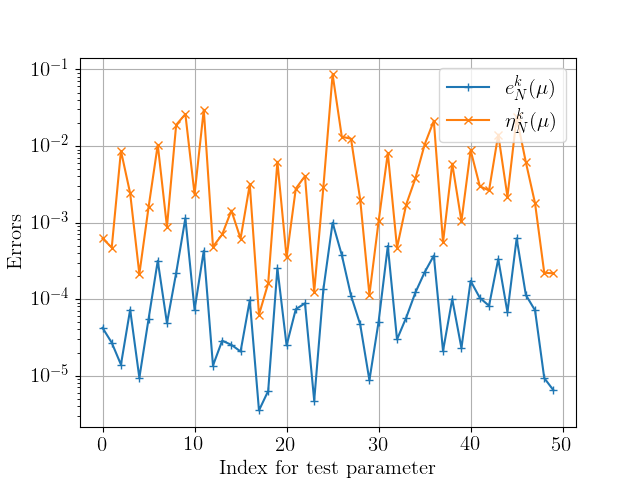}
    \caption{(Left) Variations of the prefactor $C_N^k(\mu)$ and $C_N^{k, {\rm sym}}(\mu)$ over $\mathcal{P}_{\text{test}}$ for $N=100$. \\
    (Right) Parametric variations of the real eigenvalue error $e_N^k(\mu)$ (in blue) and the associated \textit{a posteriori} error estimator $\eta_N^k(\mu)$ (in orange) over $\mathcal{P}_{\text{test}}$, for $N=100$. }
    \label{fig:toycore_prefactor}
\end{figure}

Figure \ref{fig:toycore_mean_errors_vs_APE} compares the behavior of the simple \textit{a posteriori} error estimators $\|R_N(\mu)\|$, $\|R_N^*(\mu)\|$ and $\eta_N^k(\mu)$ defined in~\eqref{eq:etaNK}, with the corresponding errors $e_N^u(\mu)$, $e_N^{u^*}(\mu)$, and $e_N^k(\mu)$ over the dimension of the reduced space.
The plots of the true errors and the corresponding estimators are parallel from $N \geq 20$ which confirms the similar convergence rate for the computed \textit{a posteriori} estimators as the associated real errors. 
Actually, the quantity $\eta_N^k(\mu)$ seems to be a reliable and efficient \textit{a posteriori} estimate of the true error up to roughly a constant multiplicative factor over a large range of parameter values, as Figure \ref{fig:toycore_prefactor} illustrates. 

In terms of absolute value, for $N=100$, the estimator $\eta_N^k(\mu)$ for the multiplication factor is about $10^{-2}$ while the true error is approximately $10^{-4}$: this illustrate the importance of introducing prefactors $\bar{C}_N^k(\mu)$, $\bar{C}_N^u(\mu)$ and $\bar{C}_N^{u^*}(\mu)$ to estimate the true errors, see~\eqref{eq:Cbar}. This is important in particular in order to stop the greedy procedure when the real error is below a given threshold. This will be discussed below.

\begin{figure}[h!]
    \centering
    \includegraphics[scale=0.35]{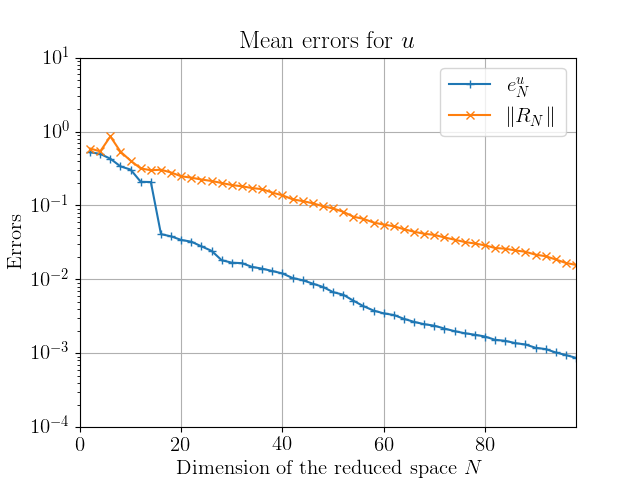}
    \hspace*{-.7cm}
    \includegraphics[scale=0.35]{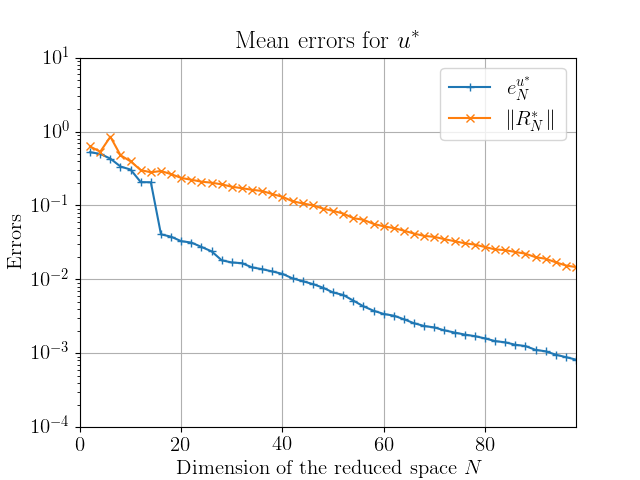}
    \hspace*{-.7cm}
    \includegraphics[scale=0.35]{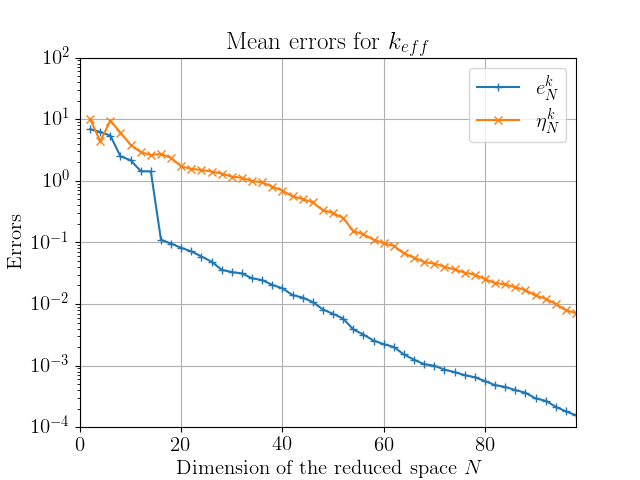}
    \caption{Mean values for errors and associated \textit{a posteriori} error estimators over $\mathcal{P}_{\text{test}}$. (Left) $e_N^u$ and $\|R_N\|$; (Middle) $e_N^{u^*}$ and $\|R_N^*\|$; (Right) $e_N^k$ and $\eta_N^k$.}
    \label{fig:toycore_mean_errors_vs_APE}
\end{figure}

\subsubsection{Test case 2: 2D two-group "minicore" problem}

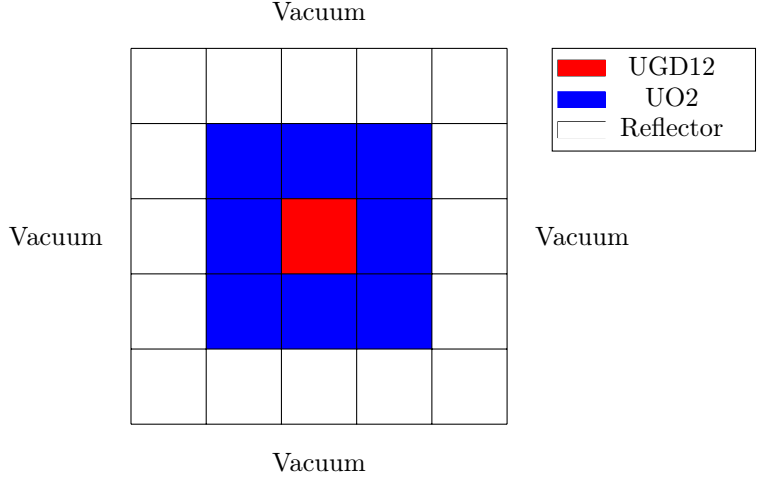
\begin{figure}[htbp]
    \begin{center}
\begin{tikzpicture}[scale=5./25.]
\fill[white] (0,0) rectangle (25.,25.);
\fill[blue] (5.,5.) rectangle (20.,20.);
\fill[red] (10.,10.) rectangle (15.,15.);
\draw (0,0) grid[xstep=5.,ystep=5.](25.,25.);
\draw[black](28.38,23.17) rectangle (31.512,24.2159);
\fill[red] (28.38,23.17) rectangle (31.512,24.2159);
\node at (35.953,23.851925) {UGD12 };
\draw[blue](28.38,21.17-0.1) rectangle (31.512,22.2159-0.1);
\fill[blue] (28.38,21.17-0.1) rectangle (31.512,22.2159-0.1);
\node at (35.953,21.851925-0.1) {UO2 };
\draw[black](28.38,19.17-0.1) rectangle (31.512,20.2159-0.1);
\fill[white] (28.38,19.17-0.1) rectangle (31.512,20.2159-0.1);
\node at (35.953,19.851925-0.1) {Reflector };
\draw[black](28.,18.17) rectangle (41.512,25.);
\node at(0.5*25.,-0.1*25.) { Vacuum};
\node at(-0.2*25.,0.5*25.) { Vacuum};
\node at(0.5*25.,+1.1*25.) { Vacuum};
\node at(+1.2*25.,0.5*25.) { Vacuum};
\end{tikzpicture}
\end{center}
\caption{Median cross-sectional view of the \textit{MiniCore} ($z = 234.36$ cm)}
    \label{fig:minicore}
\end{figure}

We now provide a second, more challenging, test case called \textit{minicore}.
The core is modeled as a square of side length $L=107.52$ cm. As Figure \ref{fig:minicore} shows, it is made up of $K=25$ assemblies (1 fuel assembly composed of a mix of uranium dioxyde and Gadolinium oxyde denoted UGD12 + 8 fuel assemblies composed of uranium dioxyde labeled UO2 + 16 radial reflector assemblies named REFR), 
 each one being $21.504$ cm long. 
 It is discretized into $\mathcal{N}=2602$ degrees of freedom. Here, there holds $B_{\mu} \neq I$, and the Dirichlet boundary condition in Problem~\eqref{eq:diffusion_equation} is replaced by a Robin condition called void boundary condition which writes
\begin{align*}
     D_i(r,\mu) \nabla\phi_i(r,\mu).\Vec{n} + \dfrac{1}{2}\phi_i(r,\mu)=0 \quad \text{ on }
     \partial \Omega,
     \quad 1\leq i \leq 2,
\end{align*}
where $\Vec{n}$ is the outward unit normal vector to 
$\partial \Omega$.

In this test case, the parameter $\mu$
stands for five parameters which determine all the physical parameters entering~\eqref{eq:diffusion_equation}.
More precisely, by recalling the partition $(\Omega_k)_{k=1}^K$ of the domain $\Omega$, the parameter set $\mathcal{P}$ is the $5K$ dimensional vector space 
\begin{equation*}
    \mathcal{P} = \left\{ \mu = \left(\mu_1,\ldots,\mu_K\right),\, \forall 1\le k\le K,\, \mu_k \in \mathbb{R}^5 \right\}, 
\end{equation*}
such that $\mu_k$ contains the following information attached to the subdomain $\Omega_k$: 
\begin{itemize}
    \item the nature of the material in $\Omega_k$;
    \item the burnup value, in MWd/ton;
    \item the fuel temperature, in K;
    \item the boron concentration, in particle per million (ppm);
    \item the moderator density. 
\end{itemize}
The parametric sets $\mathcal{P}_{\text{train}}$ and $\mathcal{P}_{\text{test}}$ are randomly generated in $\mathcal{P}$ such that
$$
       \#\mathcal{P}_{\text{train}} = 1000, \qquad
       \#\mathcal{P}_{\text{test}} = 50, \text{ and }\qquad
       \mathcal{P}_{\text{train}} \cap \mathcal{P}_{\text{test}} = \emptyset .
$$

Regarding the offline stage, in order to avoid any stability issue, a POD procedure over a reduced space of dimension 10 (generated from 5 direct plus 5 adjoint eigenvectors snapshots) is used to initialize the greedy  procedure (see Algorithm~\ref{algorithm:alg2}). Then, the greedy procedure is continued using the \textit{a posteriori} estimator $\|R_N\|+\|R^*_N\|$, as the quantity of interest here is the two-group flux $(\phi_1^\mathcal{N},\phi_2^\mathcal{N})$ as well as its adjoint $(\phi_1^{*,\mathcal{N}},\phi_2^{*,\mathcal{N}})$.

The left part of Figure~\ref{fig:minicore_mean_errors} depicts mean relative errors $e_N^{k,{\rm rel}}, e_{N,L^2}^{u,{\rm rel}}$, and $e_N^{u,{\rm rel}}$ as a function of the dimension of the reduced basis. 
The relative error on the multiplication factor is of the order of $10^{-5}$ for $N = 80$. 
Typically, as the left part of Figure~\ref{fig:minicore_flux1} shows, for a certain $\mu_0 \in \mathcal{P}$ and for $N=100$, the maximum error on the associated first-group flux does not exceed $3.2 \times 10^{-4}$; as for the second group, the right part of Figure~\ref{fig:minicore_flux1} shows that the flux error is locally gathered in an area of low flux, quite far from the hot spot.

\begin{figure}[h!]
    \centering
    \includegraphics[scale=0.5]{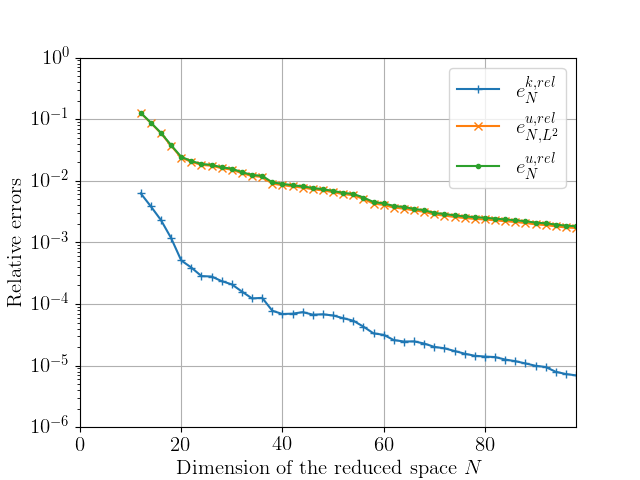}
    \includegraphics[scale=0.5]{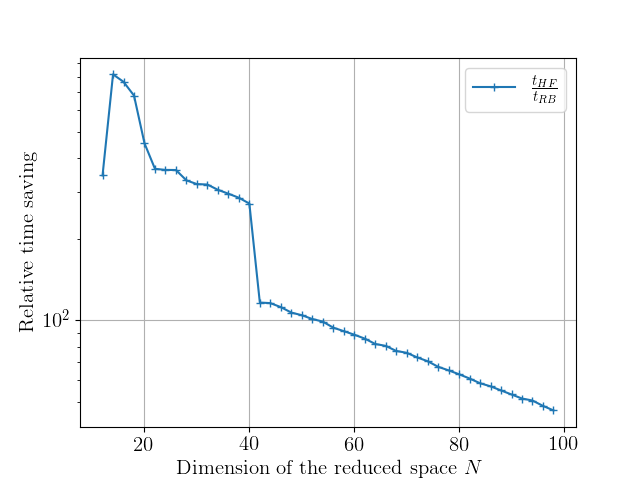}
    \caption{(Left) Mean relative errors over $\mathcal{P}_{\text{test}}$; (Right) Relative time saving factor $\dfrac{t_{\rm HF}}{t_{\rm RB}}$ as a function of the dimension of the reduced space $N$.
    }
    \label{fig:minicore_mean_errors}
\end{figure}

\begin{figure}[h!]
    \centering
    \includegraphics[scale=0.5]{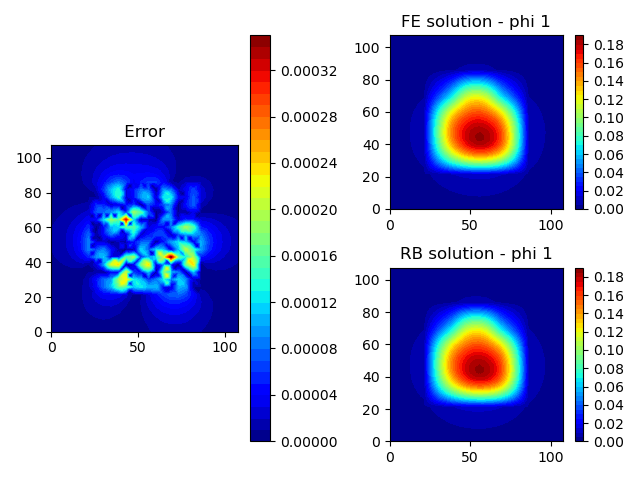}
    \includegraphics[scale=0.5]{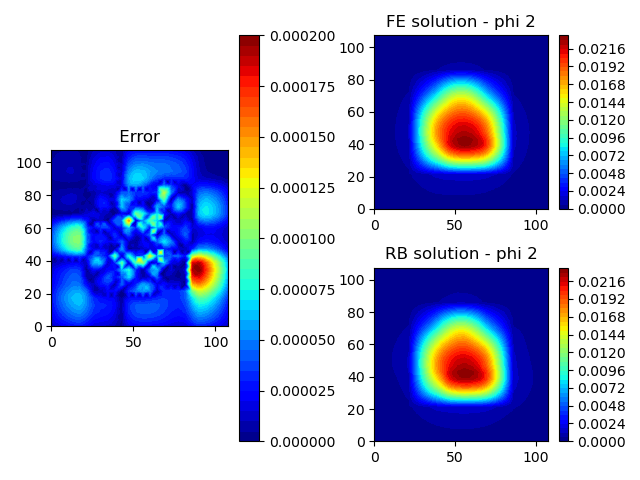}
    \caption{(Left) Plots of the first energy group of high-fidelity (upper right) and reduced (lower right) solutions $u_{\mu_0}$ and $u_{\mu_0,N}$, and their error (left) $\left|u_{\mu_0}-u_{\mu_0,N}\right|$, for $N=100$ and for $\mu_0 \in \mathcal{P}_{\text{test}}$; 
    (Right) Plots of the second energy group of high-fidelity (upper right) and reduced (lower right) solutions $u_{\mu_0}$ and $u_{\mu_0,N}$, and their error (left) $\left\|u_{\mu_0}-u_{\mu_0,N}\right\|$, for $N=100$ and for $\mu_0 \in \mathcal{P}_{\text{test}}$} \label{fig:minicore_flux1}
\end{figure}

Importantly, the reduced method enables the solution to be computed faster than the high-fidelity approach, which typically takes about 4.56 s to be computed for the present test case.
The right part of Figure~\ref{fig:minicore_mean_errors} illustrates that the relative saving time factor is a decreasing function of the dimension of the reduced space~$N$, and exhibits a large computational gain compared to the high-fidelity solver.
It is observed that for a relative error on $k_{\rm eff}$ ranging from $10^{-4}$ to $10^{-6}$, the reduced solution can be obtained with a computational time from 50 up to 300 times smaller than the high-fidelity solution. 

\begin{figure}[h!]
    \centering
    \includegraphics[scale=0.35]{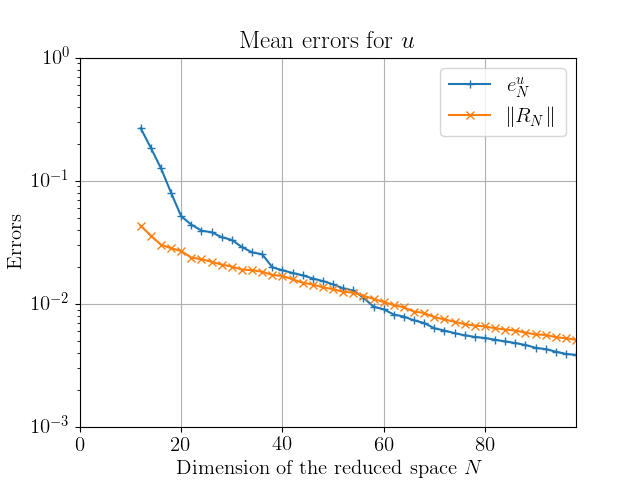}
     \hspace*{-.7cm}
    \includegraphics[scale=0.35]{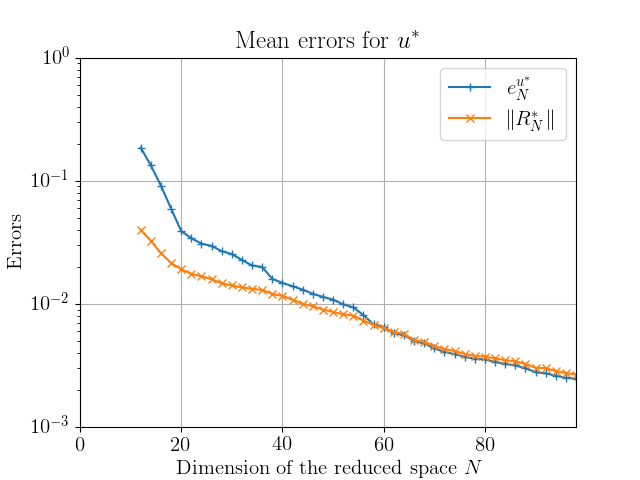}
     \hspace*{-.7cm}
    \includegraphics[scale=0.35]{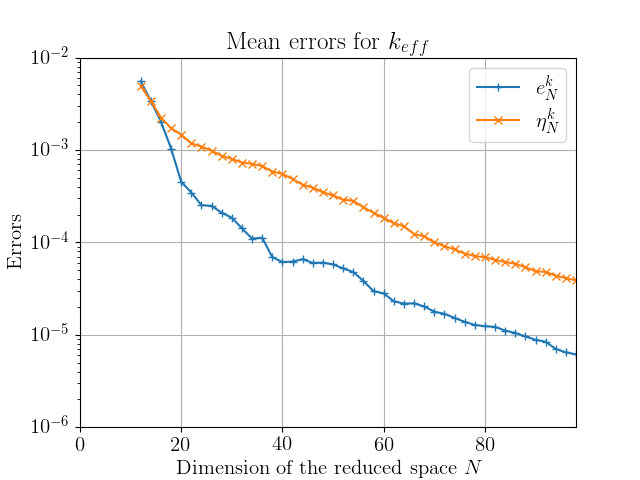}
    \caption{Mean values for errors and associated \textit{a posteriori} error estimators over $\mathcal{P}_{\text{test}}$. \\ (Left) $e_N^u$ and $\|R_N\|$; (Middle) $e_N^{u^*}$ and $\|R_N^*\|$; (Right) $e_N^k$ and $\eta_N^k$.}
    \label{fig:minicore_mean_errors_vs_APE}
\end{figure}

We now study the certification of the method performed by the \textit{a posteriori} estimator. Figure~\ref{fig:minicore_mean_errors_vs_APE} shows that, although the residuals display similar values as those for the real eigenvector errors, for the eigenvalue, the order of magnitude of the \textit{a posteriori} estimator is roughly 10 times larger than the real error, for $N\ge 30$.  Despite the fairly good parametric variations of the estimate, illustrated by Figure~\ref{fig:minicore_mean_errors_vs_APE_param}, the gap between real error and estimator must be corrected in order to implement a relevant stopping criterion in the greedy algorithm. This points out a certain variation of the prefactor $C_N^k(\mu)$ over the dimension of the reduced space $N$.
In order to bring a correction to the model, the practical efficiency of the estimator proposed in Section~\ref{sec:practical} is computed. 
The right plot of Figure~\ref{fig:minicore_mean_errors_vs_APE_param} shows that the efficiency $\mathcal{E}_N^k$ defined in~\eqref{eq:ENk} levels off for $N=100$ at the order of magnitude of $10^{-1}$, and does not depend too much on the parameter $\mu$. 
Therefore, we propose to apply the procedure outlined in 
Section~\ref{sec:practical} to build a posteriori error estimators of the form~\eqref{eq:Cbarmax_delta}, with constants $\overline{C}_N^k$, $\overline{C}_N^u$ and $\overline{C}_N^{u^*}$ approximated by~\eqref{eq:Cbarmax}. This requires to
choose a set $\mathcal{P}_{\text{pref}}$, that we  randomly chose in $\mathcal{P}$ such that
$$
     \mathcal{P}_{\text{pref}} \subset \mathcal{P}, \qquad
       \#\mathcal{P}_{\text{pref}} = 10, \text{ and } \qquad
       \mathcal{P}_{\text{pref}} \cap \mathcal{P}_{\text{train}} \cap \mathcal{P}_{\text{test}} = \emptyset .
$$
As a result of this procedure, Figure~\ref{fig:minicore_mean_errors_vs_APE_updated2} shows that the order of magnitude of the modified estimator corresponds to the one of the real error, showing that the new \textit{a posteriori} estimator tends to be an optimal stopping indicator.

\begin{figure}[h!]
    \centering
    \includegraphics[scale=0.5]{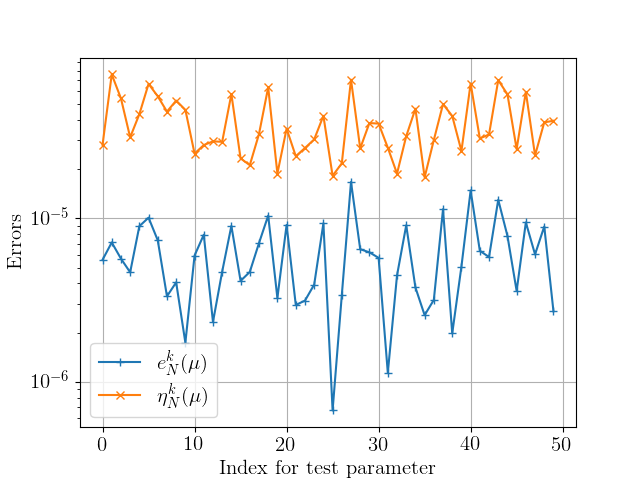}
    \includegraphics[scale=0.5]{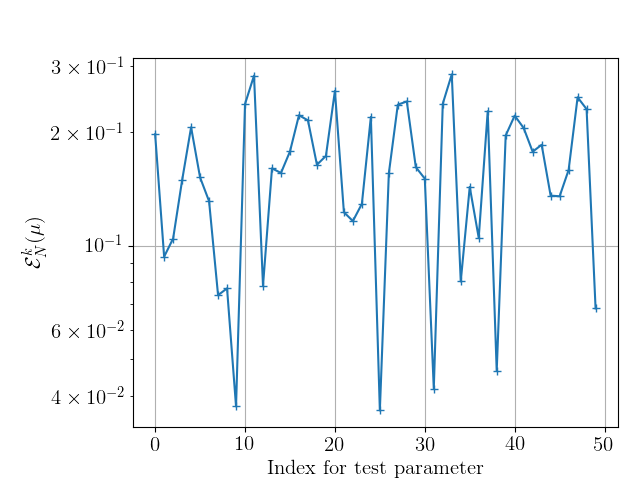}
    \caption{(Left) Parametric variations of the real eigenvalue error $e_N^k(\mu)$ (in blue) and its associated \textit{a posteriori} error estimator $\eta_N^k(\mu)$ (in orange) over $\mathcal{P}_{\text{test}}$, for $N=100$; (Right) Parametric variations of the practical efficiency $\mathcal{E}_{N}^k(\mu)$ over $\mathcal{P}_{\text{test}}$, for $N=100$.}
    \label{fig:minicore_mean_errors_vs_APE_param}
\end{figure}
\begin{figure}[h!]
    \centering
    \includegraphics[scale=0.35]{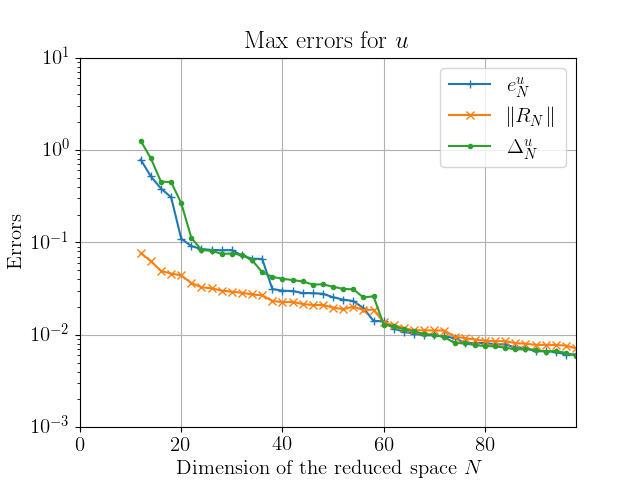}
     \hspace*{-.7cm}
    \includegraphics[scale=0.35]{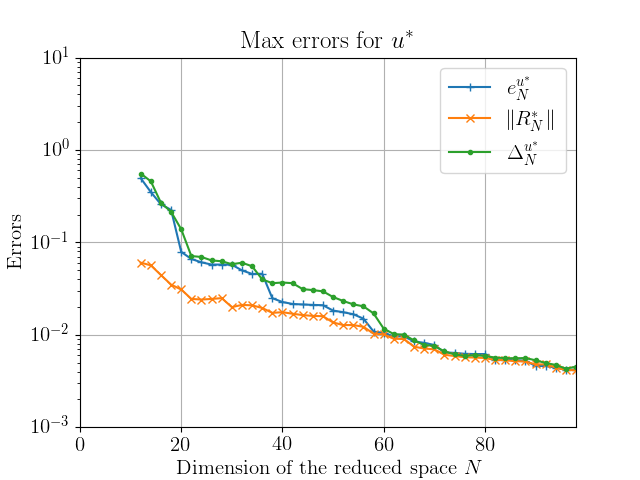}
     \hspace*{-.7cm}
    \includegraphics[scale=0.35]{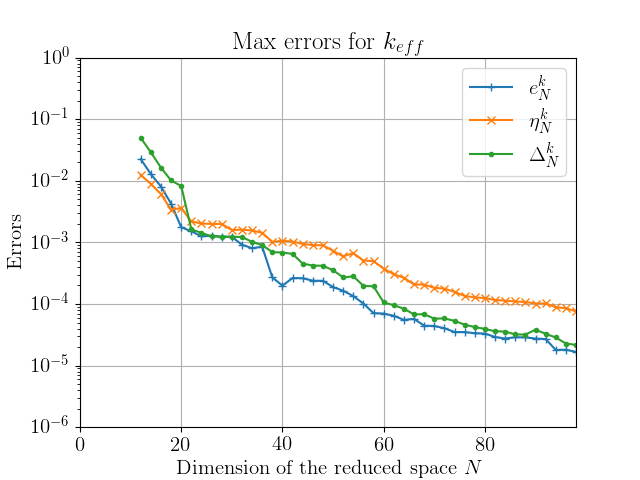}
    \caption{Maximum values for errors and associated \textit{a posteriori} error estimators over $\mathcal{P}_{\text{test}}$. Upper left: $e_N^u$, $\|R_N\|$ and $\Delta_N^u$; upper right: $e_N^{u^*}$, $\|R_N^*\|$ and $\Delta_N^{u^*}$; lower: $e_N^k$, $\eta_N^k$ and $\Delta_N^k$.
    }
    \label{fig:minicore_mean_errors_vs_APE_updated2}
\end{figure}

Finally, we gather in Table~\ref{table:cpu_times} the measured computational times for several quantities of interest and main stages in Python. Overall, the reduced basis method is very useful when the number $p$ of solutions that must be computed is very large, such as in an optimization process. Roughly, if $t_{\text{offline}}$ denotes the computational time of the \textit{offline} stage, $t_{\text{HF}}$ the \textit{high-fidelity} solver computational time, and $t_{\text{RB}}$ the reduced solver computational time, the reduced basis method becomes relevant when there holds
\begin{align*}
    t_{\text{offline}} + p \times t_{\text{RB}} < p \times t_{\text{HF}},
\end{align*}
that is
\begin{align*}
    p > \dfrac{t_{\text{offline}}}{t_{\text{HF}}-t_{\text{RB}}}.
\end{align*}
For this test case, this corresponds to $p > 1743$ parameter values.

\begin{table}[h!]
\centering
\begin{tabular}{|c|c|}
\hline
 & Mean computational time \\ \hline
\textit{Offline} stage & $\approx$ 11 hours \\ \hline
Assembling residual norm (\textit{offline} part) & 49.19 s \\ \hline
Assembling residual norm (\textit{online} part) & 5.03 s \\ \hline
Solving the \textit{high-fidelity} problem ($t_{\rm HF}$) & 14.71 s \\ \hline
Solving the reduced problem ($t_{\rm RB}$) & 0.44 s \\ \hline
\end{tabular}
\caption[Mean computational times for the Efficient Greedy reduced basis method applied to the 2D two-group \textit{minicore} in Python, for $N=100$]{Mean computational times for the Efficient Greedy reduced basis method applied to the 2D two-group \textit{minicore} in Python, for $N=100$}
\label{table:cpu_times}
\end{table}

\section*{Acknowledgements}

This project has received funding from the
European Research Council (ERC) under the European Union's Horizon 2020
research and innovation programme (grant agreement EMC2 No 810367). 
GD was supported by the French ‘Investissements d’Avenir’ program, project Agence
Nationale de la Recherche (ISITE-BFC) (contract ANR-15-IDEX-0003). GD was also supported by the Ecole des Ponts-ParisTech.

\bibliographystyle{siam}
\bibliography{biblio}

\end{document}